\numberwithin{equation}{section}
\numberwithin{figure}{section}
\theoremstyle{plain}
\newtheorem{theorem}{\protect\theoremname}[section]
  \theoremstyle{plain}
  \newtheorem{lemma}[theorem]{\protect\lemmaname}
  \theoremstyle{plain}
  \newtheorem{proposition}[theorem]{\protect\propositionname}
  \theoremstyle{plain}
  \newtheorem{corollary}[theorem]{\protect\corollaryname}
  \theoremstyle{plain}
\theoremstyle{remark}
\newtheorem{remark}[theorem]{Remark}
\theoremstyle{definition}
    \newtheorem{definition}[theorem]{\protect\definitionname}
\providecommand{\corollaryname}{Corollary}
\providecommand{\lemmaname}{Lemma}
\providecommand{\propositionname}{Proposition}
\providecommand{\theoremname}{Theorem}
\providecommand{\conjecturename}{Conjecture}
\providecommand{\definitionname}{Definition}
\newcommand{\A}{\mathcal{A}}
\newcommand{\C}{\mathcal{C}}
\newcommand{\Z}{\mathbb{Z}}
\newcommand{\T}{\mathbb{T}}
\newcommand{\N}{\mathbb{N}}
\newcommand{\B}{\mathcal{B}}
\newcommand{\E}{\mathcal{E}}
\newcommand{\go}{G^{(0)}}
\newcommand\ho{H^{(0)}}
\newcommand{\V}{\mathcal{V}}
\newcommand{\K}{\mathcal{K}}
\newcommand{\Prim}{\operatorname{Prim}}
\newcommand{\supp}{\operatorname{supp}}
\newcommand\red{\text{\upshape{red}}}
\newcommand{\clsp}{\operatorname{\overline{\lsp}}}
\newcommand{\lsp}{\operatorname{span}}
\newcommand{\Iso}{\operatorname{Iso}}
\newcommand\set[1]{\{\,#1\,\}}
\definecolor{Dgreen}{cmyk}{0.93,0.33,0.92,0.25} 
\begin{document}

\title{A Stabilization Theorem for Fell Bundles over groupoids}
\author{Marius Ionescu}

\address{Department of Mathematics\\  United States Naval Academy\\
  Annapolis, MD 21402 USA}

\email{ionescu@usna.edu}

\author{Alex Kumjian}

\address{Department of Mathematics \\ University of Nevada\\ Reno NV 89557 USA}

\email{alex@unr.edu}

\author{Aidan Sims}

\address{School of Mathematics and Applied Statistics\\ University of
  Wollongong\\ NSW 2522, Australia}

\email{asims@uow.edu.au}

\author{Dana P. Williams}

\address{Department of Mathematics\\ Dartmouth College \\ Hanover, NH
03755-3551 USA}

\email{dana.williams@Dartmouth.edu}

\thanks{The first, second, and fourth  authors were partially
supported by their individual grants from the  Simons
Foundation. \\
The first author was also partially supported  by a Junior
NARC grant from the United States Naval Academy.\\
The third author was supported by Australian Research Council grant DP150101595.\\
The second author would like to thank the first and third authors for their hospitality
and support. We thank the anonymous referee for helpful suggestions.}

\subjclass[2010]{Primary 46L05, 46L55; Secondary: 46L08}

\keywords{$C^*$-algebra; groupoid; Fell bundle; stabilization; Morita equivalence;
crossed product}

\begin{abstract}
  We study the $C^*$-algebras associated to upper-semicontinuous Fell
  bundles over second-countable Hausdorff groupoids. Based on ideas
  going back to the Packer--Raeburn ``Stabilization Trick," we
  construct from each such bundle a groupoid dynamical system whose
  associated Fell bundle is equivalent to the original bundle. The
  upshot is that the full and reduced $C^*$-algebras of any saturated
  upper-semicontinuous Fell bundle are stably isomorphic to the full
  and reduced crossed products of an associated dynamical system. We
  apply our results to describe the lattice of ideals of the
  $C^*$-algebra of a continuous Fell bundle by applying Renault's
  results about the ideals of the $C^*$-algebras of groupoid crossed
  products. In particular, we discuss simplicity of the Fell-bundle
  $C^*$-algebra of a bundle over $G$ in terms of an action, described
  by the first and last named authors, of $G$ on the primitive-ideal
  space of the $C^*$-algebra of the part of the bundle sitting over
  the unit space. We finish with some applications to twisted
  $k$-graph algebras, where the components of our results become more
  concrete.
\end{abstract}

\maketitle

\section{Introduction}\label{sec:prelim}

The construction of a $C^*$-algebra from a Fell bundle over a groupoid
subsumes all the usual ways of building a $C^*$-algebra out of group
or groupoid.  Special cases include group and groupoid $C^*$-algebras
(with or without a twisting cocycle), group and groupoid dynamical
systems, $C^{*}$-algebras associated to twists over groupoids, Green's
twisted dynamical systems, and even twisted versions of groupoid
dynamical systems, just to name the most common.

An important task in understanding the structure of any $C^*$-algebra
is understanding its ideals. Quite a bit is known about the ideal
structure of group and groupoid $C^*$-algebras, and also of
crossed-products associated to various types of $C^*$-dynamical
systems (cf., \cite{effhah:mams67, sau:ma77,sau:jfa79,gooros:im79,
  ren:jot91, ionwil:iumj09}). So it would be very useful to be able to
``lift'' results about the ideal structure of groupoid crossed
products to results about Fell-bundle $C^*$-algebras. A result to this
effect for continuous Fell bundles over \'etale groupoids was
established by the second author in \cite{dkr:ms08}: for such Fell
bundles, the reduced $C^*$-algebra is Morita equivalent to the reduced
$C^*$-algebra of a groupoid dynamical system. Muhly sketched in
\cite{muh:cm01} how one can extend Kumjian's techniques to non-\'etale
groupoids, but still only for continuous bundles. Here we elaborate on
a variation of Muhly's approach and in doing so prove a generalization
of his result (Theorem~\ref{thm:main}): given a second-countable
saturated upper-semicontinuous Fell bundle $p:\B\to G$ over a
second-countable Hausdorff locally compact groupoid we show that there
is a groupoid dynamical system (in the sense of \cite{muhwil:nyjm08})
such that the Fell bundle associated with the groupoid dynamical
system is equivalent with $\B$ (see \cite{muhwil:dm08}). Therefore the
full and reduced $C^*$-algebras of $\B$ are Morita equivalent to the
full and reduced $C^*$-algebras, respectively, of the corresponding
groupoid dynamical system. Our techniques are a generalization of
Kumjian and Muhly's arguments and special care is needed due to the
upper-semicontinuity assumption. 
The stabilization theorem can also be derived from the results in \cite{bmz:pems13} since
a Fell bundle over a groupoid is an example of what the authors call a weak
action.\footnote{The stabilbization theorem fails for Fell bundles over non-Hausdorff
groupoids (see \cite[\S7]{busmey:jng16}).}

Our stabilization theorem is inspired by the Packer--Raeburn
``Stabilization Trick" (see \cite[Theorem~3.4]{pacrae:mpcps89}) for
twisted actions of groups, which in turn builds on Quigg's version of
Takai duality for such actions (see \cite[Theorem~3.1]{qui:iumj86}).

As an application of our results, we apply the powerful results that
Renault proved in \cite{ren:jot91} about the lattice of ideals of the
$C^*$-algebra of a groupoid dynamical system to characterize the ideal
lattice and the simplicity of the $C^*$-algebra of a continuous Fell
bundle under certain amenability assumptions (Corollaries
\ref{cor:lat}~and~\ref{cor:sim}). As a second application, we describe
a collection of examples of Fell bundles arising from twisted
$k$-graph $C^*$-algebras, and apply our main result to provide a
method for calculating their primitive-ideal spaces in some special
cases.

\section{Background about Fell bundles}

We recall next some of the definitions and facts that are needed to
prove our results. In this note we assume that $G$ is a
second-countable Hausdorff locally compact groupoid endowed with a
Haar system $\{ \lambda^x\}_{x\in \go}$ \cite{ren:groupoid}. We write
$r:G\to\go$ for the \emph{range} map $r(g)=gg^{-1}$ and $s:G\to\go$
for the \emph{source} map $s(g)=g^{-1}g$. Recall that
$\supp \lambda^x=G^x:=r^{-1}(x)$ for all $x\in \go$. For $x\in\go$ we
let $\lambda_x$ be the image of $\lambda^x$ under
inversion. Therefore, the support of $\lambda_x$ is
$G_x:=s^{-1}(x)$. For two locally compact groupoids $G$ and $H$, a
$G$--$H$ equivalence \cite[Definition 2.1]{MuReWi_JOT87} is a locally
compact Hausdorff space $Z$ which is simultaneously a free and proper
left $G$-space and a free and proper right $H$-space such that the $G$
and $H$ actions commute and such that the moment maps $r:Z\to\go$ and
$s:Z\to \ho$ induce homeomorphisms of $Z/H$ onto $\go$ and
$G\backslash Z$ onto $\ho$, respectively.  Consequently, given
$z,w\in Z$ such that $s(z)=s(w)$ there is a unique ${_{G}[z,w]}\in G$
such that ${_G[z,w]}\cdot w=z$.  Similarly, if $r(z)=r(w)$, then there
is a unique $[z,w]_{H}\in H$ such that $z\cdot [z,w]_{H}=w$.  If $G$
is a locally compact groupoid then $G$ is trivially a $G$--$G$
equivalence.

Following \cite{muhwil:dm08} (see also \cite{fd:representations1,
  fd:representations2,kum:pams98}), a Fell bundle over $G$ is an
upper-semicon\-tinuous Banach bundle $p:\B\to G$ endowed with a
partially defined multiplication and an involution that respect $p$
such that the fibres $A(x)$ over $x\in\go$ are $C^*$-algebras and each
fibre $B(g)$ is an $A(r(g))$--$A(s(g))$ imprimitivity bimodule with
respect to the inner products and actions induced by the
multiplication in $\B$. Note that for $x\in \go$ we will write both
$A(x)$ and $B(x)$ for the fiber over $x$ to distinguish its dual role
as a $C^*$-algebra and as the trivial imprimitivity bimodule over
$A(x)$. The $C^*$-algebra $A:=\Gamma_0(\go;\B)$ is called the
$C^*$-algebra over the unit space. We assume that our Fell bundles are
second countable and saturated and they have enough sections (in the
sense that evaluation of continuous sections at $g$ is surjective onto
$B(g)$ for each $g$ \cite[p.~16]{muhwil:nyjm08}). Since $B(g)$ is an
$A(r(g))$--$A(s(g))$ imprimitivity bimodule, the Rieffel
correspondence \cite{rie:pspm82b,rie:aim74,rw:morita} induces a
homeomorphism $h_g :\Prim A(s(g))\to \Prim A(r(g))$. By
\cite[Proposition~2.2]{ionwil:hjm11} there is a continuous action of
$G$ on $\Prim A$ such that
\begin{equation}
  \label{eq:action}
  g\cdot (s(g),P)=(r(g),h_g(P)).
\end{equation}

Suppose that $G$ and $H$ are locally compact Hausdorff groupoids, that
$p_G:\B\to G$ is a Fell bundle over $G$ and that $p_H:\C\to H$ is a
Fell bundle over $H$. A $\B$--$\C$ equivalence \cite[Definition
6.1]{muhwil:dm08} consists of a $G$--$H$ equivalence $Z$ and an
upper-semicontinuous bundle $q:\E\to Z$ such that $\B$ acts on the
left of $\E$, $\C$ acts on the right on $\E$, the two actions commute,
and there are sesquilinear maps $_\B\langle\cdot,\cdot\rangle$ from
$\E *_{s}\E$ to $\B$ and $\langle\cdot,\cdot\rangle_\C$ from
$\E *_{r}\E$ to $\C$ such that
\begin{enumerate}[label= (\alph*)]
\item $p_G(_\B\langle e,f\rangle)={_G[q(e),q(f)]}$ and
  $p_H(\langle e,f\rangle_\C)=[q(e),q(f)]_H$,
\item $_\B\langle e,f\rangle^*= \;_\B\langle f,e\rangle$ and
  $\langle e,f\rangle_\C^*=\langle f,e\rangle_\C$,
\item $_\B\langle b\cdot e,f\rangle=b\cdot \;_\B\langle e,f\rangle$
  and $\langle e,f\cdot c\rangle_\C=\langle e,f\rangle_\C \cdot c$,
\item $_\B\langle e,f\rangle \cdot g=e\cdot \langle f,g\rangle_\C$,
\end{enumerate}
and such that, with the $\B$ and $\C$ actions and with the inner
products coming from $_\B\langle\cdot, \cdot \rangle$ and
$\langle\cdot, \cdot\rangle_\C$, each $E(z)$ is a
$B(r(z))$--$C(s(z))$-imprimitivity bimodule.  If $\B$ and $\C$ are
equivalent Fell bundles then $C^*(G;\B)$ and $C^*(H;\C)$ are Morita
equivalent \cite[Theorem~6.4]{muhwil:dm08} and so are
$C^*_{\red}(G;\B)$ and $C^*_{\red}(H;\C)$
\cite{SimWil_NYJM13,kum:pams98}.

An important example for this note is the Fell bundle associated with
a groupoid dynamical system. Let $\pi:\A\to\go$ be an
upper-semicontinuous $C^*$-bundle over $\go$.  Assume that
$(\A,G,\alpha)$ is a groupoid dynamical system
\cite{ren:jot87,muhwil:nyjm08}. Then one can define a Fell bundle that
we denote by $\sigma:\A\rtimes_\alpha G\to G$ as in \cite[Example
2.1]{muhwil:dm08}:
\begin{equation}
  \label{eq:Fellbdldynsyst}
  \A\rtimes_\alpha G:=r^*\A=\{\,(a,g)\in \A\times G\,:\,\pi(a)=r(g)\,\}
\end{equation}
with multiplication $(a,g)(b,h)=(a\alpha_g(b),gh)$ whenever
$s(g)=r(h)$, and the involution given by
$(a,g)^*=(\alpha_g^{-1}(a^*),g^{-1})$.

If $V$ is a right Hilbert module over a $C^*$-algebra $A$
\cite{lan:hilbert,rw:morita,rie:pspm82b,kum:pams98}, then there is a left $A$-module
$V^*$ with a conjugate linear isomorphism from $V$ to $V^*$, written $v\mapsto v^*$ such
that $av^*=(va^*)^*$ and $_A\langle u^*,v^*\rangle=\langle u,v\rangle_A$. Rank-one
operators on $V$ are defined via $\theta_{u,v}(w)=u\cdot \langle v, w\rangle $. The set
of compact operators $\K(V)$ on $V$ is the closure of the linear span of rank-one
operators.  Compact operators are adjointable and $\K(V)$ is a $C^*$-algebra with respect
to the operator norm \cite[pp.~9--10]{lan:hilbert}. If $V$ is full then $V$ is a
$\K(V)$--$A$ imprimitivity bimodule where the left $\K(V)$-inner product is
$_{\K(V)}\langle u,v\rangle=\theta_{u,v}$. Hence $\K(V)$ and $A$ are Morita equivalent.
The map $u\otimes v^*\mapsto \theta_{u,v}$ extends to an isomorphism $V\otimes_A V^*
\cong \K(V)$ (we make this identification in the sequel without comment).

\section{Main result and applications}\label{sec:main}

Let $G$ be a locally compact Hausdorff groupoid endowed with a Haar
system $\{\lambda_x\}_{x\in\go}$. Let $p:\B \to G$ be a
second-countable saturated Fell bundle over $G$ and let
$A=\Gamma_0(\go;\B)$ be the $C^*$-algebra over $\go$.  We construct an
upper-semicontinuous $C^*$-bundle $k:\K(V)\to\go$ and an action
$\alpha$ of $G$ on $\K(V)$ such that $(\K(V),G,\alpha)$ is a groupoid
dynamical system and such that $\B$ and $\K(V)\rtimes_\alpha G$ are
equivalent Fell bundles, generalizing the construction and results of
\cite[Section~4]{kum:pams98} and \cite[Section~4]{muh:cm01}. We break
our construction into a series of lemmas and propositions.

For $x\in\go$ let $V(x)$ be the closure of $\Gamma_c(G_x;\B)$ under
the pre-inner product
\[
\langle
\xi,\eta\rangle_{A(x)}=\int_{G_x}\xi(\gamma)^*\eta(\gamma)d\lambda_x(\gamma).
\]
Then $V(x)$ is a full right Hilbert $A(x)$-module with the right
action given by $(\xi\cdot a)(\gamma)=\xi(\gamma)a$ for
$\xi\in \Gamma_c(G_x;\B)$ and $a\in A(x)$
\cite[Lemma~2.16]{rw:morita}. Let $V:=\bigsqcup_{x\in \go} V(x)$ and
let $\nu:V\to \go$ be the projection map.

\begin{lemma}\label{lemm1}
  With notation as above, the map
  $x\mapsto \langle \xi,\eta\rangle_{A(x)}$ is continuous for all
  $\xi,\eta\in \Gamma_c(G;\B)$.  Moreover, there is a unique topology
  on $V$ such that $\nu:V\to\go$ is an upper-semicontinuous Banach
  bundle over $\go$ of which $x\mapsto \hat f(x)=f|_{G_{x}}$ is a
  continuous section for each $f\in \Gamma_{c}(G;\B)$. The space
  $\V:=\Gamma_0(\go;V)$ is then a full Hilbert $A$-module.
\end{lemma}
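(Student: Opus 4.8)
The plan is to establish the three assertions of the lemma in sequence, leaning on standard techniques for constructing upper-semicontinuous Banach bundles from a prescribed family of continuous sections. First I would prove the continuity of $x\mapsto\langle\xi,\eta\rangle_{A(x)}$ for $\xi,\eta\in\Gamma_c(G;\B)$. The key point is that $(\gamma,\delta)\mapsto \xi(\gamma)^*\eta(\delta)$ is continuous on $G\times G$ (using continuity of multiplication and involution in $\B$ together with the fact that $p$ is open and $\B$ is upper-semicontinuous), so $\gamma\mapsto\xi(\gamma)^*\eta(\gamma)$ is a continuous compactly supported section of the restriction of $\B$ to $\Iso(G)$ (in fact it lands in $\go$-fibres up to the groupoid structure — more precisely the relevant map has values in the $C^*$-algebra fibres $A(s(\gamma))$ after accounting for the source). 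One then integrates against the continuous left Haar system $\{\lambda_x\}$; continuity of $x\mapsto\int_{G_x}\phi\,d\lambda_x$ for $\phi\in\Gamma_c(G;\B)$ is a routine consequence of the defining properties of a Haar system together with an $\epsilon/3$ argument using a partition of unity subordinate to a cover of $\supp\phi$, exactly as in \cite[Lemma~2.16]{rw:morita} and \cite{ren:groupoid}, adapted to allow upper-semicontinuous (rather than continuous) fibres. Since the fibre norms are upper-semicontinuous, the norm estimate $\|\langle\xi,\eta\rangle_{A(x)}\|\le\|\xi|_{G_x}\|\,\|\eta|_{G_x}\|$ together with continuity of the inner product will also give upper-semicontinuity of $x\mapsto\|\hat f(x)\|=\langle f,f\rangle_{A(x)}^{1/2}$.

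Next I would invoke the Hofmann--Fell construction of a Banach bundle from sections, in the upper-semicontinuous form given in \cite[Theorem~C.25 or Appendix C]{rw:morita}, or equivalently the version in \cite{muhwil:dm08}: given a set $\Gamma$ of sections $x\mapsto\hat f(x)$, $f\in\Gamma_c(G;\B)$, of the bundle of Banach spaces $\bigsqcup V(x)$ such that (i) for each $x$, $\{\hat f(x):f\in\Gamma_c(G;\B)\}$ is dense in $V(x)$, and (ii) for each $f$, $x\mapsto\|\hat f(x)\|$ is upper-semicontinuous, there is a unique topology on $V$ making $\nu$ an upper-semicontinuous Banach bundle for which every $\hat f$ is a continuous section. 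Condition (i) holds by the very definition of $V(x)$ as the completion of $\Gamma_c(G_x;\B)$, provided one checks that every element of $\Gamma_c(G_x;\B)$ is of the form $f|_{G_x}$ for some $f\in\Gamma_c(G;\B)$ — this uses the ``enough sections'' hypothesis together with a partition-of-unity argument to build a global compactly supported section restricting to a prescribed section on $G_x$, and density of such restrictions in $V(x)$. Condition (ii) is exactly what was established in the previous paragraph. Linearity of $f\mapsto\hat f$ and the fact that $\widehat{f\cdot a}(x)=\hat f(x)\cdot a$ for $a\in A$ are immediate, so the bundle structure is compatible with the algebraic operations.

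Finally, to see that $\V=\Gamma_0(\go;V)$ is a full Hilbert $A$-module, I would first note that $\V$ is an $A$-module via the pointwise action, and define $\langle\xi,\eta\rangle_A\in A$ fibrewise by $\langle\xi,\eta\rangle_A(x)=\langle\xi(x),\eta(x)\rangle_{A(x)}$; continuity of this section for $\xi,\eta\in\Gamma_c(G;\B)$ follows from the first part of the lemma, and the general case follows by a density/approximation argument since $\Gamma_c(G;\B)$-restrictions are dense in $\V$ and the fibrewise inner products are uniformly bounded on compacta. That this makes $\V$ a (right) pre-Hilbert $A$-module is a fibrewise check; completeness of $\V$ in the norm $\|\xi\|=\sup_x\|\xi(x)\|_{V(x)}^{}=\sup_x\|\langle\xi,\xi\rangle_A(x)\|^{1/2}$ is the standard fact that $\Gamma_0$ of an upper-semicontinuous Banach bundle is complete. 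For fullness, the span of $\{\langle\xi,\eta\rangle_A:\xi,\eta\in\V\}$ is an ideal of $A$; it suffices to show this ideal is all of $A$, equivalently that for each $x$ the fibre ideal $\overline{\lsp}\{\langle\xi(x),\eta(x)\rangle_{A(x)}\}=A(x)$, which holds because each $V(x)$ is a \emph{full} $A(x)$-module. A short argument promotes fibrewise fullness to global fullness: given $a\in A$ and $x\in\go$, choose $\xi,\eta\in\Gamma_c(G_x;\B)$ with $\langle\xi,\eta\rangle_{A(x)}$ close to $a(x)$, extend to global sections, and use a partition of unity over $\go$ to patch these local approximations into a global element of the ideal approximating $a$ in norm. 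The main obstacle I anticipate is the careful verification that the upper-semicontinuous (rather than continuous) bundle hypotheses do not break the Hofmann--Fell reconstruction and the Haar-system continuity estimates; in particular one must be attentive that $x\mapsto\|\hat f(x)\|$ is only upper-semicontinuous, so several ``$\epsilon/3$'' arguments that would be routine in the continuous setting require the one-sided estimates and the use of the enough-sections hypothesis to produce suitable approximating global sections.
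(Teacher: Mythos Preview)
Your proposal is correct and follows the same overall strategy as the paper: verify upper-semicontinuity of $x\mapsto\|\hat f(x)\|$, invoke the Hofmann--Fell theorem to produce the bundle structure, and then check the Hilbert-module axioms fibrewise. The paper also uses vector-valued Tietze (rather than a direct partition-of-unity construction) for the density condition, but that is a cosmetic difference.

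The one place where the paper is noticeably slicker is the first assertion. You propose to show directly that $\gamma\mapsto\xi(\gamma)^*\eta(\gamma)$ is a continuous compactly supported section of $s^*(\B|_{\go})$ and then run an $\varepsilon/3$ argument to get continuity of the vector-valued Haar integral. That works, but it amounts to reproving a special case of the fact that $\Gamma_c(G;\B)$ is closed under convolution. The paper simply observes that
\[
\langle\xi,\eta\rangle_{A(x)}=\int_{G_x}\xi(\gamma)^*\eta(\gamma)\,d\lambda_x(\gamma)=(\xi^* * \eta)(x),
\]
i.e.\ the inner product is the restriction to $\go$ of the convolution $\xi^* * \eta$, which lies in $\Gamma_c(G;\B)$ by \cite[Corollary~1.4]{muhwil:dm08}; continuity is then immediate. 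This buys you the first assertion in one line and also makes the upper-semicontinuity of $x\mapsto\|\hat f(x)\|$ transparent (it is $\|(f^**f)(x)\|^{1/2}$, a continuous function composed with the upper-semicontinuous fibre norm). Your approach is more self-contained, while the paper's leverages existing Fell-bundle machinery.
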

\begin{proof}
  The first assertion follows from the fact that
  $f^*g \in \Gamma_c(G;\B)$ for all $f, g \in \Gamma_c(G;\B)$ (see
  \cite[Corollary 1.4]{muhwil:dm08}).  Given a section
  $f\in \Gamma_c(G;\B)$ define a section $\hat{f}$ of $V$ by
  $\hat{f}(x)=f|_{G_x}$. As
  \[
  \Vert \hat{f}(x)\Vert =\Vert \langle
  \hat{f},\hat{f}\rangle_{A(x)}\Vert^\frac{1}{2},
  \]
  the map $x\mapsto \Vert \hat{f}(x)\Vert$ is the composition of a
  continuous function and an upper-semicontinuous function, and hence
  itself upper semicontinuous. The vector-valued Tietze Extension
  Theorem~\cite[Proposition~A.5]{muhwil:dm08} implies that the set
  $\{\,\hat{f}(x)\,:\,f\in \Gamma_c(G;\B)\,\}$ is dense in $V(x)$. The
  Hofmann--Fell theorem (see, for example,
  \cite[Theorem~II.13.18]{fd:representations1}, \cite{hof:74},
  \cite{hof:lnim77}, and \cite[Theorem~1.2]{ionwil:hjm11}) implies
  that there is a unique topology such that $V$ is an upper
  semicontinuous Banach bundle over $\go$ and such that
  $\Gamma(\go;V)$ contains $\{\,\hat{f}$\,:\,
  $f\in \Gamma_c(G;\B)\,\}$.  It is now easy to see that $\V$,
  equipped with the natural inner product and right $A$-action, is a
  Hilbert $A$-module.
\end{proof}

For the next lemma let $\K(V) := \bigsqcup_{x\in\go} \K(V(x))$ and let
$k : \K(V) \to \go$ be the canonical map.

\begin{lemma}\label{lemm2}
  Resume the notation of Lemma~\ref{lemm1}. Then $\K(V)$ has a unique
  topology such that $k:\K(V)\to \go$ is
  an upper semicontinuous $C^*$-bundle admitting a $C_0(\go)$-linear
  isomorphism of $\K(\V)$ onto $\Gamma_0(\go;\K(V))$, and each
  $\K(V)(x) \cong \K(V(x))$.
\end{lemma}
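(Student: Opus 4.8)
The plan is to leverage the fact, recorded at the end of Section~2, that $\K(V(x)) \cong V(x) \otimes_{A(x)} V(x)^*$ fibrewise, and more globally that $\K(\V) \cong \V \otimes_A \V^*$. Since we already know from Lemma~\ref{lemm1} that $\nu : V \to \go$ is an upper-semicontinuous Banach bundle with $\V = \Gamma_0(\go;V)$ a full Hilbert $A$-module, the first step is to build an upper-semicontinuous Banach bundle structure on the \emph{conjugate} bundle $V^* := \bigsqcup_{x} V(x)^*$: the fibrewise conjugation $v \mapsto v^*$ is an isometric conjugate-linear bijection, so declaring $x \mapsto \widehat f(x)^*$ to be continuous sections for $f \in \Gamma_c(G;\B)$ and invoking the Hofmann--Fell theorem exactly as in Lemma~\ref{lemm1} produces the required topology, with $\V^* \cong \Gamma_0(\go;V^*)$.

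Next I would form the internal-tensor-product bundle. The fibres are $V(x) \otimes_{A(x)} V(x)^*$, and one wants elementary tensors of continuous sections, $x \mapsto \widehat f(x) \otimes \widehat g(x)^*$ for $f,g \in \Gamma_c(G;\B)$, to be continuous. The norm estimate one needs is upper semicontinuity of $x \mapsto \|\widehat f(x) \otimes \widehat g(x)^*\|_{\K(V(x))} = \|\theta_{\widehat f(x),\widehat g(x)}\|$; since $\|\theta_{u,v}\| \le \|u\|\|v\|$ and, more precisely, $\theta_{u,v}^*\theta_{u,v} \le \|u\|^2\, \langle v,\cdot\rangle v$-type bounds let one control the norm by $\|\langle\widehat f,\widehat f\rangle_{A(x)}\|^{1/2}\|\langle\widehat g,\widehat g\rangle_{A(x)}\|^{1/2}$, which is upper semicontinuous by Lemma~\ref{lemm1}. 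Applying Hofmann--Fell once more gives a unique upper-semicontinuous Banach bundle $k : \K(V) \to \go$ whose continuous sections include all the $x \mapsto \theta_{\widehat f(x),\widehat g(x)}$, and whose fibre over $x$ is identified with $\K(V(x))$. The multiplication and involution on each fibre are the $C^*$-operations on $\K(V(x))$; continuity of these on the total space follows because the dense spanning sections $\theta_{\widehat f,\widehat g}$ compose and adjoin within the same class ($\theta_{u,v}\theta_{w,z} = \theta_{u\cdot\langle v,w\rangle, z}$ and $\theta_{u,v}^* = \theta_{v,u}$), together with a standard argument that operations continuous on a dense set of sections of an upper-semicontinuous bundle extend continuously. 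This makes $k : \K(V) \to \go$ an upper-semicontinuous $C^*$-bundle.

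It then remains to identify the section algebra. The isomorphism $\V \otimes_A \V^* \cong \K(\V)$ from Section~2 is $C_0(\go)$-linear because the $C_0(\go)$-action is diagonal and respected by $u \otimes v^* \mapsto \theta_{u,v}$; and $\Gamma_0(\go; \K(V))$ is by construction the completion of the span of the $\theta_{\widehat f,\widehat g}$, which matches the image of $\Gamma_c(G;\B) \odot \Gamma_c(G;\B)$ under the same map. Matching norms fibrewise and using that $\Gamma_0$ of an upper-semicontinuous bundle is determined by its fibres and a dense set of sections gives the desired $C_0(\go)$-linear isomorphism $\K(\V) \cong \Gamma_0(\go;\K(V))$, with $\K(V)(x) \cong \K(V(x))$ automatic from the fibres.

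The main obstacle I anticipate is the norm control needed to invoke Hofmann--Fell for $\K(V)$: one must check that $x \mapsto \|\theta_{\widehat f(x),\widehat g(x)}\|$ is genuinely upper semicontinuous, not merely bounded, and this requires being a little careful because the operator norm of a rank-one operator on a Hilbert module is not simply a product of norms but is controlled by them from above while the relevant lower bounds (needed to pin down that we get \emph{all} of $\K(V(x))$ in each fibre, rather than a proper subspace) come from the density of $\{\widehat f(x)\}$ in $V(x)$ established in Lemma~\ref{lemm1}. Once that estimate is in hand, everything else is a routine application of the Hofmann--Fell machinery and the universal property of $\Gamma_0$.
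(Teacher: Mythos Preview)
Your approach is genuinely different from the paper's, and the obstacle you flag at the end is real and not resolved by what you write. The paper does not build the bundle $\K(V)$ by hand via Hofmann--Fell at all. Instead it observes that $A=\Gamma_0(\go;\B)$ is a $C_0(\go)$-algebra, so there is a continuous map $\Prim A\to\go$; since $\V$ is a $\K(\V)$--$A$ imprimitivity bimodule, the Rieffel correspondence transports this to a continuous map $\Prim\K(\V)\to\go$; and then the standard equivalence between $C_0(X)$-algebras and upper-semicontinuous $C^*$-bundles (\cite[Theorem~C.26]{wil:crossed}) produces the bundle $k:\K(V)\to\go$ together with the isomorphism $\K(\V)\cong\Gamma_0(\go;\K(V))$ in one stroke. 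The fibre identification $\K(V)(x)\cong\K(V(x))$ then falls out of the construction.

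The specific gap in your route is this: you need $x\mapsto\bigl\|\sum_i\theta_{\widehat f_i(x),\widehat g_i(x)}\bigr\|$ to be upper semicontinuous for every finite sum, and the bound $\|\theta_{u,v}\|\le\|u\|\,\|v\|$ does not give that---being dominated by an upper-semicontinuous function is not the same as being upper semicontinuous. The operator norm on $\K(V(x))$ is a supremum over unit vectors, which naturally yields \emph{lower} semicontinuity from continuity of the ingredients, not upper. To actually get upper semicontinuity of these norms, the cleanest argument is to first show that $C_0(\go)$ acts centrally and nondegenerately on $\K(\V)$ (via the module structure on $\V$), making $\K(\V)$ a $C_0(\go)$-algebra, and then invoke the general fact that fibre norms in a $C_0(X)$-algebra are automatically upper semicontinuous. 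But once you have the $C_0(\go)$-algebra structure, you are essentially back to the paper's argument, and the explicit Hofmann--Fell construction of $V^*$ and $V\otimes_A V^*$ becomes unnecessary scaffolding. Your approach can be completed, but only by importing the very machinery the paper uses directly.
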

\begin{proof}
  Since $A=\Gamma_0(\go;\B)$ is a $C_0(\go)$-algebra there is a
  continuous map $\sigma_A:\Prim A\to \go$ (see, for example,
  \cite[Theorem~C.26]{wil:crossed}) given by $\sigma(I) = x$ if and
  only if $I$ contains the ideal generated by functions in $C_0(\go)$
  that vanish at $x$. Since $\V$ is an $\K(\V)$--$A$ imprimitivity
  bimodule, the Rieffel correspondence restricts to a homeomorphism
  $h:\Prim \K(\V) \to \Prim A$ (see, for example,
  \cite[Proposition~3.3.3]{rw:morita}). Therefore one obtains by
  composition a continuous map $h\circ \sigma_A :\Prim
  \K(\V)\to\go$.
  Theorem~C.26 of \cite{wil:crossed} implies that $\K(\V)$ is a
  $C_0(\go)$-algebra and that $k:\K(V)\to \go$ is an
  upper-semicontinuous $C^*$-bundle such that there is a
  $C_0(\go)$-linear isomorphism of $\K(\V)$ onto
  $\Gamma_0(\go,\K(V))$. The fiber of $\K(\V)$ over $x\in\go$ is
  isomorphic to $\K(V(x))$ by construction, so $\K(V)(x)$ is
  isomorphic to $\K(V(x))$ for all $x\in\go$.
\end{proof}

The following lemma will be useful in the definition of the action of
$G$ on $\K(V)$ and the equivalence between Fell bundles.
\begin{lemma}\label{lem:isom_HM}
  For $g\in G$, the map
  $\beta_g:\Gamma_c(G_{r(g)};\B)\odot B(g)\to \Gamma_c(G_{s(g)};\B)$
  defined on elementary tensors by
  \[
  \beta_g(\xi\otimes b)(\gamma)= \xi(\gamma g^{-1})b
  \]
  extends to an isometric isomorphism
  $\beta_g:V(r(g))\otimes_{A(r(g))} B(g)\to V(s(g))$ of Hilbert
  $A(s(g))$-modules.
\end{lemma}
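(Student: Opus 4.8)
The plan is to work throughout with the dense right $A(r(g))$-submodule $\Gamma_{c}(G_{r(g)};\B)\subseteq V(r(g))$: first check that the displayed formula gives a well-defined, inner-product–preserving map on the algebraic balanced tensor product $\Gamma_{c}(G_{r(g)};\B)\odot_{A(r(g))}B(g)$, then extend it by continuity, and finally verify surjectivity. For well-definedness, note that for $\xi\in\Gamma_{c}(G_{r(g)};\B)$, $b\in B(g)$ and $\gamma\in G_{s(g)}$ one has $\gamma g^{-1}\in G_{r(g)}$, so $\xi(\gamma g^{-1})b\in B((\gamma g^{-1})g)=B(\gamma)$; continuity of $\beta_{g}(\xi\otimes b)$ follows from continuity of multiplication in $\B$ and of $\gamma\mapsto\gamma g^{-1}$, and its support lies in the compact set $(\supp\xi)g$, so $\beta_{g}(\xi\otimes b)\in\Gamma_{c}(G_{s(g)};\B)$. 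The assignment $\xi\otimes b\mapsto\beta_{g}(\xi\otimes b)$ is bilinear, it is $A(r(g))$-balanced since $\beta_{g}(\xi\cdot a\otimes b)(\gamma)=\xi(\gamma g^{-1})ab=\beta_{g}(\xi\otimes a\cdot b)(\gamma)$, and it is right $A(s(g))$-linear, so it descends to a linear map on the balanced tensor product.

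The heart of the argument is the isometry. By sesquilinearity it suffices to compare inner products of elementary tensors $\xi\otimes b$ and $\eta\otimes c$. Using that the right $A(s(g))$-valued inner product on the imprimitivity bimodule $B(g)$ is $(b,x)\mapsto b^{*}x$ (multiplication in $\B$), the internal-tensor-product inner product unwinds to
\[
\langle\xi\otimes b,\eta\otimes c\rangle_{A(s(g))}
= b^{*}\Big(\int_{G_{r(g)}}\xi(\delta)^{*}\eta(\delta)\,d\lambda_{r(g)}(\delta)\Big)c ,
\]
whereas directly
\[
\langle\beta_{g}(\xi\otimes b),\beta_{g}(\eta\otimes c)\rangle_{A(s(g))}
= \int_{G_{s(g)}} b^{*}\,\xi(\gamma g^{-1})^{*}\eta(\gamma g^{-1})\,c\;d\lambda_{s(g)}(\gamma) .
\]
Pulling $b^{*}$ and $c$ outside both integrals, equality reduces to
\[
\int_{G_{s(g)}}\xi(\gamma g^{-1})^{*}\eta(\gamma g^{-1})\,d\lambda_{s(g)}(\gamma)
= \int_{G_{r(g)}}\xi(\delta)^{*}\eta(\delta)\,d\lambda_{r(g)}(\delta) .
\]
Here $\delta\mapsto\xi(\delta)^{*}\eta(\delta)$ is a compactly supported continuous function on $G_{r(g)}$ with values in the single $C^{*}$-algebra $A(r(g))$, and the displayed identity is exactly the right-invariance of the Haar system $\{\lambda_{x}\}$ (the image of $\{\lambda^{x}\}$ under inversion) applied to it. Hence $\beta_{g}$ preserves inner products, so it is isometric and extends uniquely to an isometry $\beta_{g}\colon V(r(g))\otimes_{A(r(g))}B(g)\to\overline{\Gamma_{c}(G_{s(g)};\B)}=V(s(g))$.

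For surjectivity, since $\beta_{g}$ is isometric its range is closed, so it suffices to produce a dense subset of it. Carrying out the same construction with $g^{-1}$ in place of $g$ yields an isometry $\beta_{g^{-1}}\colon V(s(g))\otimes_{A(s(g))}B(g^{-1})\to V(r(g))$, and a short computation shows that for $\xi\in\Gamma_{c}(G_{r(g)};\B)$, $b\in B(g)$, $b'\in B(g^{-1})$,
\[
\beta_{g^{-1}}\big(\beta_{g}(\xi\otimes b)\otimes b'\big)=\xi\cdot(bb') ,
\]
with $bb'\in B(gg^{-1})=A(r(g))$ acting on the right of $V(r(g))$. Since $B(g)$ is an imprimitivity bimodule, $\clsp\{bb':b\in B(g),\,b'\in B(g^{-1})\}=A(r(g))$, and $\Gamma_{c}(G_{r(g)};\B)\cdot A(r(g))$ is dense in $V(r(g))$, so the closed span of all such elements is $V(r(g))$. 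Thus $\beta_{g^{-1}}$ has dense, hence closed and full, range, so it is an isometric isomorphism; applying this with $g^{-1}$ in place of $g$ gives the same for $\beta_{g}$. (Alternatively, one can show the range of $\beta_{g}$ is a $C_{c}(G_{s(g)})$-submodule of $\Gamma_{c}(G_{s(g)};\B)$ that is fibrewise dense, and invoke the standard density criterion for sections of an upper-semicontinuous bundle.)

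I expect the isometry to be the main obstacle — specifically, recognizing that the internal-tensor-product inner product collapses to the $A(r(g))$-valued integral identity above and that this identity is precisely the right-invariance of $\{\lambda_{x}\}$ for the (genuinely $A(r(g))$-valued, hence integrable) section $\delta\mapsto\xi(\delta)^{*}\eta(\delta)$. The remaining points — that $b^{*}$ and $c$ may be moved through the bundle-valued integrals, and that invariance holds for Banach-space–valued $C_{c}$-functions — are routine but deserve a sentence of justification.
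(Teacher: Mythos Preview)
Your argument is correct. The isometry computation is essentially identical to the paper's: both reduce to the invariance of the Haar system applied to the $A(r(g))$-valued integrand $\delta\mapsto\xi(\delta)^{*}\eta(\delta)$ (the paper phrases this as ``left invariance'' of $\{\lambda^{x}\}$, you as ``right invariance'' of $\{\lambda_{x}\}$, which is the same statement).

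The surjectivity argument, however, is genuinely different. The paper argues directly: choosing an approximate identity $c_{\nu}=\sum_{i}b_{\nu,i}^{*}b_{\nu,i}$ for $A(s(g))$ with $b_{\nu,i}\in B(g)$ (possible since the bundle is saturated), and setting $\xi_{\nu,i}(\gamma)=\xi'(\gamma g)b_{\nu,i}^{*}$, one gets $\sum_{i}\beta_{g}(\xi_{\nu,i}\otimes b_{\nu,i})=\xi'\cdot c_{\nu}\to\xi'$, so any $\xi'\in\Gamma_{c}(G_{s(g)};\B)$ lies in the closed range. Your route instead builds $\beta_{g^{-1}}$ symmetrically, proves the composition identity $\beta_{g^{-1}}(\beta_{g}(\xi\otimes b)\otimes b')=\xi\cdot(bb')$, and uses fullness of the \emph{left} inner product on $B(g)$ to see that $\beta_{g^{-1}}$ is onto; then you swap $g\leftrightarrow g^{-1}$. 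Both approaches ultimately rest on saturation of $\B$, just on opposite sides (right versus left fullness of $B(g)$). Your composition identity is exactly equation~(3.2), which the paper proves separately afterwards; so your route effectively absorbs that later lemma into the present proof, at the cost of a slightly less direct path to surjectivity of $\beta_{g}$ itself.
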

\begin{proof}
  Fix $g \in G$. Let $\xi, \eta \in \Gamma_c(G_{r(g)};\B)$ and
  $a,b \in B(g)$. Using left invariance at the third equality, we
  calculate:
  \begin{align*}
    \langle \beta_g(\xi\otimes a), \beta_g(\eta \otimes b)\rangle
    &= \int_{G_{s(g)}} \beta_g(\xi\otimes a)^*(\gamma)\beta_g(\eta\otimes b)
      (\gamma)d\lambda_{s(g)}(\gamma)\\
    &= \int_{G_{s(g)}} a^* \xi^*(\gamma g^{-1})\eta(\gamma g^{-1})b
      d\lambda_{s(g)}(\gamma)\\
    &=a^* \int_{G_{r(g)}}\xi^*(\gamma)\eta(\gamma) d\lambda_{r(g)}(\gamma) b\\
    &=a^*\langle \xi,\eta\rangle_{A(r(g))} b\\
    &=\langle \xi\otimes a, \eta\otimes b\rangle_{A(s(g))}
  \end{align*}
  Thus $\beta_g$ preserves the inner-product, which implies first that
  it is isometric, and second---by right-linearity of the inner
  product---that it preserves the right $A_{s(g)}$-action.

  To check that the range of $\beta_g$ is dense in $V(s(g))$ fix
  $\xi^\prime \in \Gamma_c(G_{s(g)};\B)$.  There is an approximate
  identity $\{ c_\nu \}_\nu$ of $A(s(g))$ of the form
  $c_\nu = \sum_{i=1}^{n_\nu} b_{\nu,i}^*b_{\nu,i}$ where
  $b_{\nu,i} \in B(g)$ for all $\nu, i$.  For each $\nu, i$, define
  $\xi_{\nu,i}\in \Gamma_c(G_{r(g)};\B)$ by
  $\xi_{\nu,i}(\gamma)=\xi^\prime(\gamma g)b_{\nu,i}^*$.  Then the net
  $\big\{\sum_{i=1}^{n_\nu} \beta_g(\xi_{\nu,i}\otimes
  b_{\nu,i})\big\}_\nu$
  converges to $\xi^\prime$.  Hence $\beta_g$ extends to an isometric
  isomorphism of Hilbert $A(s(g))$-modules.
\end{proof}
As an easy consequence of Lemma~\ref{lem:isom_HM} we obtain that, for
$g\in G$, the map
$\beta_g^*: B(g)\odot \Gamma_c(G_{s(g)};\B)^*\to
\Gamma_c(G_{r(g)};\B)^*$ defined on elementary tensors by
\[
\beta_g^*(b\otimes \eta^*)=\bigl(\beta_{g^{-1}}(\eta\otimes
b^*)\bigr)^*
\]
extends to an isometric isomorphism
$B(g)\otimes V(s(g))^* \cong V(r(g))^*$ of left Hilbert
$A(r(g))$-modules.  It is important in the following to keep in mind
that the maps $\beta_g$ and $\beta_g^*$ are onto. Moreover,
$\clsp\{\,\beta_g(\xi\otimes b)\,:\xi\in V(r(g)),b\in B(g)\,\} =
V(s(g))$
and
$\clsp\{\,\beta_g^*(b\otimes \eta^*)\,:\,b\in B(g),\eta\in
V(s(g))\,\}=V(r(g))^*$.
Therefore, for $g\in G$, the map $\alpha_g$ defined by
\begin{equation}
  \label{eq:1}
  \alpha_g(\beta_g(\xi\otimes b)\otimes \eta^*)=\xi\otimes
  \beta^*_g(b\otimes \eta^*),
\end{equation}
for $\xi\in V(r(g)),b\in B(g)$, and $\eta\in V(s(g))$, extends to an
isomorphism between $\K(V(s(g)))$ and $\K(V(r(g)))$. We collect in the
following lemma a few useful facts about the maps $\beta_g$ and
$\alpha_g$.
\begin{lemma}
  Resume the notation of Lemma~\ref{lem:isom_HM} and let
  $\{\alpha_g : g \in G\}$ be the isomorphisms defined in
  \eqref{eq:1}.
  \begin{enumerate}[label={\upshape(\alph*)}]
  \item For $g \in G$ we have
    \begin{equation}
      \label{eq:2}
      \beta_{g^{-1}}(\beta_g(\xi\otimes b_1)\otimes b_2^*)=\xi\cdot
      {}_{A(r(g))}\langle b_1,b_2\rangle=\xi\cdot (b_1b_2^*).
    \end{equation}
    for all $\xi\in V(r(g))$ and $b_1,b_2\in B(g)$.
  \item For composable $g,h\in G$, we have
    \begin{equation}
      \label{eq:7}
      \beta_{gh}(\xi\otimes b_1b_2)=\beta_h(\beta_g(\xi\otimes
      b_1)\otimes b_2).
    \end{equation}
    for all $\xi \in V(r(g))=V(r(gh))$, $b_1\in B(g)$ and
    $b_2\in B(h)$.
  \item For $g\in G$ and $T\in \K(V(s(g)))$, we have
    \begin{equation}
      \label{eq:8}
      T\circ \beta_g=\beta_g\circ \bigl(\alpha_g(T)\otimes \operatorname{Id}\bigr).
    \end{equation}
  \end{enumerate}
\end{lemma}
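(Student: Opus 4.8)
The three identities are purely algebraic consequences of the definitions, so the plan is to verify each by evaluating on elementary tensors and using only the formula $\beta_g(\xi\otimes b)(\gamma) = \xi(\gamma g^{-1})b$ together with the definition of $\alpha_g$ in \eqref{eq:1}; density of elementary tensors and boundedness of all maps then extend the identities to all of $V(r(g))\otimes B(g)$, etc. For part~(a), I would compute, for $\gamma \in G_{r(g)}$,
\[
  \beta_{g^{-1}}\bigl(\beta_g(\xi\otimes b_1)\otimes b_2^*\bigr)(\gamma)
  = \bigl(\beta_g(\xi\otimes b_1)\bigr)(\gamma g)\, b_2^*
  = \xi(\gamma g g^{-1})\, b_1 b_2^*
  = \xi(\gamma)\,(b_1 b_2^*),
\]
which is exactly $(\xi\cdot(b_1b_2^*))(\gamma)$; the identification $b_1b_2^* = {}_{A(r(g))}\langle b_1,b_2\rangle$ is just the defining left inner product on the imprimitivity bimodule $B(g)$. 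Note this also recovers that $\beta_{g^{-1}}$ is a one-sided inverse to $\beta_g$ in the appropriate sense.

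For part~(b), both sides are elements of $V(r(gh)) = V(r(g))$ and I would evaluate at $\gamma \in G_{r(g)}$: the left side gives $\xi(\gamma(gh)^{-1})\,b_1b_2 = \xi(\gamma h^{-1}g^{-1})\,b_1b_2$, while the right side gives $\bigl(\beta_g(\xi\otimes b_1)\bigr)(\gamma h^{-1})\,b_2 = \xi(\gamma h^{-1} g^{-1})\,b_1 b_2$, using associativity of the multiplication in $\B$. These agree, and since the linear spans of such $b_1b_2$ with $b_1\in B(g)$, $b_2\in B(h)$ are dense in $B(gh)$ by saturation, and $\beta_{gh}$ is bounded, \eqref{eq:7} follows.

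Part~(c) is the one requiring a little more care, since it mixes the Hilbert-module picture with the $V\otimes V^* \cong \K(V)$ identification, so I would first reduce to rank-one operators $T = \theta_{\beta_g(\xi\otimes b),\eta} \in \K(V(s(g)))$, which suffice by density. On such a $T$, the left side applied to $\beta_g(\zeta\otimes c) \in V(s(g))$ is $\beta_g(\xi\otimes b)\cdot \langle \eta, \beta_g(\zeta\otimes c)\rangle$; unwinding the inner product using that $\beta_g$ is isometric (Lemma~\ref{lem:isom_HM}) and writing $\eta = \beta_g(\eta'\otimes b')$, this becomes an expression one can massage via \eqref{eq:2} back into the form $\beta_g$ applied to something. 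On the other hand, $\alpha_g(T) = \alpha_g(\beta_g(\xi\otimes b)\otimes \eta^*)$, which by \eqref{eq:1} equals $\xi\otimes \beta_g^*(b\otimes\eta^*)$, an operator in $\K(V(r(g)))$; then $\bigl(\alpha_g(T)\otimes\operatorname{Id}\bigr)$ acts on $V(r(g))\otimes B(g)$, and applying $\beta_g$ afterward, using the definition of $\beta_g^*$ and again \eqref{eq:2}, should produce the same section. The main obstacle is bookkeeping: keeping straight which fibre each vector lives in, correctly transporting the $V^*$-variables through $\beta_g^*$, and ensuring the rank-one reduction is legitimate (it is, because all maps in sight are bounded and the rank-one operators are total in $\K$). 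Once the elementary-tensor computation is done, continuity/boundedness closes the argument, and these identities are exactly what is needed downstream to check that $\alpha$ is an action and that $\B \cong \K(V)\rtimes_\alpha G$.
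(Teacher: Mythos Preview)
Your approach is essentially identical to the paper's: all three parts are verified by pointwise evaluation on elementary tensors, with part~(c) reduced to rank-one operators whose first leg is written in the range of $\beta_g$. One slip in part~(b): both sides of \eqref{eq:7} lie in $V(s(gh))=V(s(h))$, not $V(r(gh))$, so you must evaluate at $\gamma\in G_{s(gh)}$ rather than $G_{r(g)}$ (otherwise the product $\gamma(gh)^{-1}$ is not even defined); your displayed formulas are correct once this is fixed, and this is exactly how the paper argues. For part~(c), the paper writes out both sides explicitly as integrals and invokes left invariance of the Haar system to match them, whereas you encode the same step by citing that $\beta_g$ is isometric and then using part~(a); since the isometry in Lemma~\ref{lem:isom_HM} was itself established via Haar invariance, this is only a cosmetic repackaging of the paper's argument.
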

\begin{proof}
  (a) Let $g\in G$, $\xi\in \Gamma_c(G_{r(g)};\B)$, $b_1,b_2\in B(g)$,
  and $\gamma\in G_{r(g)}$. Then
  \[
  \beta_{g^{-1}}(\beta_g(\xi\otimes b_1)\otimes
  b_2^*)(\gamma)=\beta_g(\xi\otimes b_1)(\gamma
  g)b_2^*=\xi(\gamma)b_1b_2^*.
  \]
  The result follows.

  (b) Let $\xi\in \Gamma_{c}(G_{r(g)};\B)=\Gamma_{c}(G_{r(gh)};\B)$
  and let $\gamma\in G_{s(h)}=G_{s(gh)}$. Then
  \[
  \beta_{gh}(\xi\otimes b_1b_2)(\gamma)=\xi(\gamma
  h^{-1}g^{-1})b_1b_2=\beta_g(\xi\otimes b_1)(\gamma
  h^{-1})b_2=\beta_h(\beta_g(\xi\otimes b_1)\otimes b_2)(\gamma).
  \]

  (c) Assume that $T$ is a rank-one operator $T=u\otimes v^*$ with
  $u,v\in \Gamma_c(G_{s(g)};\B)$ and assume that
  $u=\beta_g(u^\prime\otimes b^\prime)$ for some
  $u^\prime\otimes b^\prime\in V(r(g))\otimes B(g)$. Let
  $\xi\in V(r(g))$ and $b\in B(g)$. Then
  \begin{align*}
    T(\beta_g(\xi\otimes b))&=u\cdot \langle v,\beta_g(\xi\otimes
                              b)\rangle_{A(s(g))}\\
                            &=u \int_{G_{s(g)}}v^*(\gamma)\beta_g(\xi\otimes
                              b)(\gamma)d\lambda_{s(g)}(\gamma)\\
                            &=u \int_{G_{s(g)}}v^*(\gamma)\xi(\gamma
                              g^{-1})b\,d\lambda_{s(g)}(\gamma)\\
                            &=u \int_{G_{s(g)}}v^*(\gamma)\xi(\gamma
                              g^{-1})\,d\lambda_{s(g)}(\gamma)b.
  \end{align*}
  Also,
  \begin{align*}
    \beta_g(\alpha_g(T)\xi\otimes
    b)&=\beta_g(\alpha_g(\beta_g(u^\prime \otimes b^\prime)\otimes
        v^*)\xi \otimes b)\\
      &=\beta_g((u^\prime \otimes \beta_g^*(b^\prime \otimes
        v^*)\xi)\otimes b)\\
      &=\beta_g(u^\prime\langle \beta_{g^{-1}}(v\otimes b^{\prime
        *}),\xi\rangle_{A(r(g))}\otimes b)\\
      &=\beta_g(u^\prime\otimes b^\prime)\int_{G_{r(g)}}v^*(\gamma
        g)\xi(\gamma)\,d\lambda_{r(g)}(\gamma)b,
  \end{align*}
  where the last equality follows from an easy computation. Using the
  invariance of the Haar system we deduce~\eqref{eq:8} for rank-one
  operators. The result then follows from linearity and continuity.
\end{proof}

\begin{proposition}
  With $\K(V)$ defined in Lemma~\ref{lemm2} and with $\alpha$ defined
  in \eqref{eq:1}, $(\K(V),G,\alpha)$ is a groupoid dynamical system.
\end{proposition}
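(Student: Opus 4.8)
The plan is to verify the three requirements in the definition of a groupoid dynamical system. The first two — that each $\alpha_g$ is a $*$-isomorphism $\K(V)(s(g))\to\K(V)(r(g))$ and that $\alpha_{gh}=\alpha_g\circ\alpha_h$ for composable $g,h$ (so in particular $\alpha_x=\operatorname{Id}$ when $x\in\go$) — are algebraic consequences of the identities already in hand. Equation \eqref{eq:8} says that $\beta_g^{-1}T\beta_g=\alpha_g(T)\otimes\operatorname{Id}_{B(g)}$, i.e.\ that $T\mapsto\beta_g^{-1}T\beta_g$ is the composite of $\alpha_g$ with $S\mapsto S\otimes\operatorname{Id}_{B(g)}$; since $T\mapsto\beta_g^{-1}T\beta_g$ is a $*$-isomorphism of $\K(V(s(g)))$ onto $\K(V(r(g))\otimes_{A(r(g))}B(g))$ and $S\mapsto S\otimes\operatorname{Id}_{B(g)}$ is a $*$-isomorphism of $\K(V(r(g)))$ onto the same algebra (because $B(g)$ is an $A(r(g))$--$A(s(g))$ imprimitivity bimodule), $\alpha_g$ is a $*$-isomorphism; taking $g$ to be a unit $x$, so that $\beta_x(\xi\otimes b)=\xi\cdot b$, gives $\alpha_x=\operatorname{Id}$. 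For the cocycle identity, \eqref{eq:7} says — via the isomorphism $B(g)\otimes_{A(s(g))}B(h)\cong B(gh)$ coming from the multiplication of $\B$ — that $\beta_{gh}=\beta_h\circ(\beta_g\otimes\operatorname{Id}_{B(h)})$; substituting this into the previous description shows that $T\mapsto\beta_{gh}^{-1}T\beta_{gh}$ sends $T$ to $\alpha_g(\alpha_h(T))\otimes\operatorname{Id}_{B(gh)}$, and comparing with $\beta_{gh}^{-1}T\beta_{gh}=\alpha_{gh}(T)\otimes\operatorname{Id}_{B(gh)}$, together with injectivity of $S\mapsto S\otimes\operatorname{Id}_{B(gh)}$, gives $\alpha_{gh}=\alpha_g\circ\alpha_h$. (Alternatively one may verify the cocycle identity on rank-one operators directly from \eqref{eq:1}, \eqref{eq:2} and \eqref{eq:7}.)

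The remaining requirement — continuity of $(g,T)\mapsto\alpha_g(T)$ from $\set{(g,T)\in G\times\K(V):s(g)=k(T)}$ to $\K(V)$ — carries the real content. Since each $\alpha_g$ is isometric, a routine argument with the topology of an upper-semicontinuous $C^*$-bundle — approximating a given $T\in\K(V(s(g)))$ by finite sums of operators $\theta_{\beta_g(\hat f(r(g))\otimes e(g)),\,\beta_g(\hat f'(r(g))\otimes e'(g))}$, which span a dense subspace of $\K(V(s(g)))$ (because $\clsp\set{\beta_g(\xi\otimes b):\xi\in V(r(g)),\,b\in B(g)}=V(s(g))$, the $\hat f(r(g))$ are dense in $V(r(g))$ by Lemma~\ref{lemm1}, and $\set{e(g):e\in\Gamma_c(G;\B)}=B(g)$) — reduces matters to two claims: (i) for all $f,e\in\Gamma_c(G;\B)$ the map $g\mapsto\beta_g(\hat f(r(g))\otimes e(g))$ is a continuous section of $s^*V$; and (ii) for all $f,f',e,e'\in\Gamma_c(G;\B)$ the map $g\mapsto\alpha_g\bigl(\theta_{\beta_g(\hat f(r(g))\otimes e(g)),\,\beta_g(\hat f'(r(g))\otimes e'(g))}\bigr)$ is a continuous section of $r^*\K(V)$. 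Claim (ii) follows from the closed formula
\[
\alpha_g\bigl(\theta_{\beta_g(\xi\otimes b),\,\beta_g(\eta\otimes c)}\bigr)=\theta_{\xi\cdot(bc^*),\,\eta},
\]
a short computation from \eqref{eq:1} and \eqref{eq:2} (with $bc^*\in A(r(g))$ acting on the right of $\xi$ as in \eqref{eq:2}): with $\xi=\hat f(r(g))$, $\eta=\hat f'(r(g))$, $b=e(g)$, $c=e'(g)$ the right-hand side is $\theta_{\hat f(r(g))\cdot(e(g)e'(g)^*),\,\hat f'(r(g))}$, which varies continuously with $g$ because $g\mapsto\hat f(r(g))$ and $g\mapsto\hat f'(r(g))$ are continuous sections of $r^*V$ (Lemma~\ref{lemm1} and continuity of $r$), $g\mapsto e(g)e'(g)^*$ is a continuous section of $r^*\A$ (continuity of the multiplication and involution of $\B$), and the module action $V\times_{\go}\A\to V$ together with the map $(\xi,\eta)\mapsto\theta_{\xi,\eta}$, $V\times_{\go}V\to\K(V)$, are continuous.

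Thus the whole argument reduces to claim (i), and this is the step I expect to be the main obstacle. The difficulty is that the pointwise formula $\beta_g(\hat f(r(g))\otimes e(g))(\gamma)=f(\gamma g^{-1})e(g)$ for $\gamma\in G_{s(g)}$ does not present this element as $\widehat F(s(g))$ for a single $F\in\Gamma_c(G;\B)$ — because $g$ cannot be recovered from $\gamma$ — so the continuous sections of $V$ supplied by Lemma~\ref{lemm1} are not directly available to test convergence against as $g$ varies. My plan for (i) is: fix $g_0$; given $\varepsilon>0$, choose $\phi\in C_c(G)$ supported near $g_0$ with $\int\phi\,d\lambda_{s(g_0)}=1$, and set $F:=f*(\phi e)\in\Gamma_c(G;\B)$ (convolution in the Fell bundle, $\phi e$ the pointwise product); then $\widehat F$ is a continuous section of $V$ by Lemma~\ref{lemm1}, and the estimate $\lVert\beta_g(\hat f(r(g))\otimes e(g))-\widehat F(s(g))\rVert<\varepsilon$ for $g$ near $g_0$ follows from continuity of the convolution and of the Haar system $\set{\lambda_x}$, in the familiar ``approximate identity in the groupoid'' manner. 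Letting $\varepsilon\to0$ and invoking the description of convergence in an upper-semicontinuous Banach bundle then gives (i), and with it the continuity of $\alpha$ and the proposition.
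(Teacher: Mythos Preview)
Your proof is correct and in several respects cleaner than the paper's. For the $*$-isomorphism property and the cocycle identity, the paper proceeds by brute-force computation on rank-one operators --- expanding both sides of $\alpha_g(ST)=\alpha_g(S)\alpha_g(T)$ and of $\alpha_g(S^*)=\alpha_g(S)^*$ in terms of the $\beta$'s and the Haar integrals and matching them up. Your deduction of both from \eqref{eq:8} (together with the fact that $S\mapsto S\otimes\operatorname{Id}_{B(g)}$ is a $*$-isomorphism because $B(g)$ is an imprimitivity bimodule) and \eqref{eq:7} is more conceptual and buys a considerable economy of computation.

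For continuity the two arguments diverge more interestingly. The paper applies the $C.20$ criterion directly: given $g_i\to g$ and suitably lifted sections $\tilde\xi_j,\tilde b_j,\tilde\eta_j$, it asserts that
\[
\tilde\xi_j(r(g_i))\otimes\beta_{g_i}^*\bigl(\tilde b_j(g_i)\otimes\tilde\eta_j(s(g_i))^*\bigr)
\;\longrightarrow\;
\tilde\xi_j(r(g))\otimes\beta_{g}^*\bigl(\tilde b_j(g)\otimes\tilde\eta_j(s(g))^*\bigr),
\]
the only justification offered being that the $\beta_{g_i}^*$ are isometric. But isometry of each $\beta_{g_i}^*$ says nothing about continuity in $g$; unwinding $\beta_g^*$ via $\beta_{g^{-1}}$, this asserted convergence is exactly your claim~(i) (with $g$ replaced by $g^{-1}$), which you rightly flag as the crux. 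Your closed formula
\[
\alpha_g\bigl(\theta_{\beta_g(\xi\otimes b),\,\beta_g(\eta\otimes c)}\bigr)=\theta_{\xi\cdot(bc^*),\,\eta}
\]
is a genuine simplification over the paper's version of the same computation: it makes claim~(ii) immediate, since the right-hand side is visibly built from continuous sections of $r^*V$ and $r^*\A$. Your convolution argument for claim~(i) --- comparing $\beta_g(\hat f(r(g))\otimes e(g))$ with $\widehat{f*(\phi e)}(s(g))$ for $\phi$ a bump function near $g_0$ --- is the right idea and goes through: the relevant uniform estimate in $\gamma$ follows from compact supports, and continuity of the Haar system handles the normalisation. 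So your account is, if anything, more honest than the paper's about where the real work lies.
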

\begin{proof}
  We already know that $\alpha_g:\K(V(s(g)))\to \K(V(r(g)))$ is an
  isomorphism. It is not obvious, however, that $\alpha_g$ is a
  *-homomorphism and we provide a proof next. Recall that using the
  identification
  $\K(V(s(g))\simeq V(s(g))\otimes_{A(s(g))} V(s(g))^*$, the product
  of $u_1\otimes v_1^*$ and
  $u_2\otimes v_2^*\in V(s(g))\otimes V(s(g))^*$ is given by
  $u_1\cdot \langle v_1,u_2\rangle_{A(s(g))}\otimes v_2^*$. Let
  $\xi_1,\xi_2\in \Gamma_c(G_{r(g)};\B)$, $b_1,b_2\in B(g)$, and
  $\eta_1,\eta_2\in \Gamma_c(G_{s(g)};\B)$.  Then
  \begin{align*}
    \alpha_g\bigl((\beta_g(\xi_1\otimes b_1)
    &\otimes \eta_1^*)\cdot
      (\beta_g(\xi_2\otimes b_2)\otimes \eta_2^*)\bigr)\\
    &= \alpha_g\bigl(\beta_g(\xi_1\otimes b_1)\cdot \langle
      \eta_1,\beta_g(\xi_2\otimes b_2)\rangle_{A(s(g))}\otimes
      \eta_2^*\bigr)
      \intertext{which, using that $\beta_g$ is a Hilbert module map,
      is}
    &=\alpha_g\bigl(\beta_g(\xi_1\otimes (b_1\cdot  \langle
      \eta_1,\beta_g(\xi_2\otimes b_2)\rangle_{A(s(g))}))\otimes
      \eta_2^*\bigr).
      \intertext{By the definition of $\alpha_g$, this is}
    &= \xi_1\otimes \beta_g^*\bigl(b_1\cdot \langle
      \eta_1,\beta_g(\xi_2\otimes b_2)\rangle_{A(s(g))}\otimes
      \eta_2^*\bigr)\\
    &=\xi_1\otimes \left(\beta_{g^{-1}}\bigl(\eta_2\otimes (b_1\cdot
      \langle\eta_1,\beta_g(\xi_2\otimes
      b_2)\rangle_{A(s(g))})^*\bigr)\right)^*\\
    &= \xi_1 \otimes \left(\beta_{g^{-1}}\bigl(\eta_2\otimes \langle
      \eta_1,\beta_g(\xi_2\otimes
      b_2)\rangle_{A(s(g))}^*b_1^*\bigr)\right)^*\\
    &=\xi_1\otimes \left(\beta_{g^{-1}}\bigl(\eta_2\otimes \langle
      \beta_g(\xi_2\otimes b_2),\eta_1\rangle_{A(s(g))}b_1^*\bigr)\right)^*.
  \end{align*}
  On the other hand,
  \begin{align*}
    \alpha_g(\beta_g(\xi_1\otimes b_1)
    &\otimes \eta_1^*)\cdot
      \alpha_g(\beta_g(\xi_2\otimes b_2)\otimes \eta_2^*)\\
    &=\bigl(\xi_1\otimes \beta_g^*(b_1\otimes \eta_1^*)\bigr)\cdot
      \bigl(\xi_2\otimes \beta_g^*(b_2\otimes \eta_2^*)\bigr) \\
    &=\bigl(\xi_1\otimes (\beta_{g^{-1}}(\eta_1\otimes b_1^*))^*\bigr)\cdot
      \bigl(\xi_2\otimes \beta_g^*(b_2\otimes \eta_2^*)\bigr) \\
    &=\xi_1\cdot \langle \beta_{g^{-1}}(\eta_1\otimes
      b_1^*),\xi_2\rangle_{A(r(g))}\otimes
      \bigl(\beta_{g^{-1}}(\eta_2\otimes b_2^*)\bigr)^*.
      \intertext{Since the tensor product is balanced over
      $A(r(g))$, this is}
    &=\xi_1\otimes \left(\beta_{g^{-1}}(\eta_2\otimes b_2^*)\cdot
      \langle \beta_{g^{-1}}(\eta_1\otimes
      b_1^*),\xi_2\rangle_{A(r(g))}^*\right)^* \\
    &=\xi_1\otimes \left(\beta_{g^{-1}}(\eta_2\otimes \bigl(b_2^*\cdot
      \langle \xi_2,\beta_{g^{-1}}(\eta_1\otimes
      b_1^*)\rangle_{A(r(g))}\bigr))\right)^*.
  \end{align*}
  Using the invariance of the Haar system we have that
  \begin{align*}
    \langle \beta_g(\xi_2\otimes
    b_2),\eta_1\rangle_{A(s(g))}b_1^*
    &=\int_{G_{s(g)}}\beta_g(\xi_2\otimes
      b_2)^*(\gamma)\eta(\gamma)d\lambda_{s(g)}(\gamma)b_1^*
    \\
    &= b_2^*\int_{G_{s(g)}}\xi_2^*(\gamma
      g^{-1})\eta(\gamma)d\lambda_{s(g)}(\gamma)b_1^*\\
    &=b_2^*\int_{G_{r(g)}}\xi_2^*(\gamma)\eta(\gamma
      g)b_1^*d\lambda_{r(g)}(\gamma)\\
    &=b_2^*\int_{G_{r(g)}}\xi_2^*(\gamma)\beta_{g^{-1}}(\eta\otimes
      b_1^*)d\lambda_{r(g)}(\gamma)\\
    &=b_2^*\cdot \langle \xi_2,\beta_{g^{-1}}(\eta\otimes b_1^*)\rangle_{A(r(g))}.
  \end{align*}
  Therefore
  \begin{multline*}
    \alpha_g\bigl((\beta_g(\xi_1\otimes b_1)\otimes \eta_1^*)\cdot
    (\beta_g(\xi_2\otimes b_2)\otimes \eta_2^*)\bigr)\\
    =\alpha_g(\beta_g(\xi_1\otimes b_1)\otimes \eta_1^*)\cdot
    \alpha_g(\beta_g(\xi_2\otimes b_2)\otimes \eta_2^*).
  \end{multline*}
  Take $\xi_1,\xi_2\in V(r(g))$ and $b_1,b_2\in B(g)$. Recall that the
  adjoint of $u\otimes v^*\in V(s(g))\otimes V(s(g))^*$ equals
  $v\otimes u^*$. Then
  \begin{align*}
    \alpha_g\left(\beta_g(\xi_1\otimes b_1)\otimes
    (\beta_g(\xi_2\otimes b_2))^*\right)^*
    &=\left(\xi_1\otimes
      \beta_g^*(b_1\otimes (\beta_g(\xi_2\otimes b_2))^*\right)^*\\
    &= \left(\xi_1 \otimes \bigl(\beta_{g^{-1}}(\beta_g(\xi_2\otimes
      b_2)\otimes b_1^*)\bigr)^*\right)^*\\
    &=\beta_{g^{-1}}(\beta_g(\xi_2\otimes
      b_2)\otimes b_1^*)\otimes \xi_1^*,\\
    \intertext{which, using \eqref{eq:2}, is}
    &=\xi_2\cdot (b_2b_1^*)\otimes \xi_1^*=\xi_2\otimes
      \bigl(\xi_1\cdot (b_1b_2^*)\bigr)^*\\
    &=\xi_2\otimes \beta_g^*(b_2\otimes \beta_g(\xi_1\otimes b_1)^*)\\
    &=\alpha_g\left(\bigl(\beta_g(\xi_1\otimes b_1)\otimes
      (\beta_g(\xi_2\otimes b_2))^*\bigr)^*\right).
  \end{align*}
  Therefore $\alpha_g$ is a *-isomorphism.

  Take $g,h \in G$ such that $s(g)=r(h)$. To prove that
  $\alpha_{gh}=\alpha_g\circ \alpha_h$ let $\xi\in V(r(g))$,
  $a\in B(g)$, $b\in B(h)$, and $\eta\in V(s(h))$. Then
  \begin{align*}
    (\alpha_g\circ \alpha_h)(\beta_h(\beta_g(\xi\otimes a)\otimes
    b)\otimes \eta^*)&=\alpha_g(\beta_g(\xi\otimes a)\otimes
                       \beta^*_h(b\otimes \eta^*))\\
                     &=\xi\otimes \beta^*_g(a\otimes
                       \beta^*_h(b\otimes \eta^*))\\
                     &=\xi\otimes \beta^*_{gh}(ab\otimes \eta^*) \\
                     &=\alpha_{gh}((\beta_h(\beta_g(\xi\otimes a)\otimes
                       b)\otimes \eta^*).
  \end{align*}

  Finally we prove that the action of $G$ on $\K(V)$ is continuous. We
  are going to use the criterion from
  \cite[Proposition~C.20]{wil:crossed} for the convergence of nets in
  an upper-semicontinuous $C^*$-bundle. It suffices to consider nets
  $\{g_i\}\subset G$ such that $g_i\to g$, $\{\xi_i\}$ indexed by a
  set $I$ such that $\xi_i\in \Gamma_c(G_{r(g_i)};\B)$ and
  $\xi_i\to \xi\in \Gamma_c(G_{r(g)};\B)$, $\{b_i\}$ such that
  $b_i\in B(g_i)$ and $b_i\to b\in B(g)$, and $\{\eta_i\}$ such that
  $\eta_i\in \Gamma_c(G_{s(g_i)};\B)$ and
  $\eta_i \to \eta\in \Gamma_c(G_{s(g)};\B)$. Moreover it suffices to
  consider nets $\{\tilde{\xi}_i\},\{\tilde{b}_i\}$, and
  $\{\tilde{\eta}_i\}$ in $\Gamma_c(G;\B)$ such that
  $\tilde{\xi}_i\to \tilde{\xi}$, $\tilde{b}_i\to b$,
  $\tilde{\eta}_i\to \tilde{\eta}$, $\tilde{\xi}|_{G_{r(g_i)}}=\xi_i$,
  $\tilde{b}_i(g_i)=b_i$, and
  $\tilde{\eta}_i|_{G_{s(g_i)}}=\eta_i$. We will write
  $\tilde{\xi}_i(x):=\tilde{\xi}_i|_{G_{x}}$ and similarly for
  $\tilde{\eta}_i(x)$ with $x\in\go$ in order to keep the notation
  simple. Fix $\varepsilon>0$ and fix an index $j\in I$ such that
  $\Vert \tilde{\xi}_j-\tilde{\xi}\Vert<\varepsilon$,
  $\Vert \tilde{b}_j-\tilde{b}\Vert<\varepsilon$, and
  $\Vert \tilde{\eta}_j-\tilde{\eta}\Vert<\varepsilon$. Then, since
  $\beta_{g_i}^*$ are isometric isomorphisms,
  \begin{multline*}
    \alpha_{g_i}(\beta_{g_i}(\tilde{\xi}_j(r(g_i))\otimes
    \tilde{b}_j(g_i))\otimes
    \tilde{\eta}_j(s(g_i))^*)=\tilde{\xi}_j(r(g_i))
    \otimes\beta_{g_i}^*(\tilde{b}_j(g_i)\otimes
    \tilde{\eta}_j(s(g_i))^*)\\
    \to \tilde{\xi}_j(r(g))\otimes\beta_{g}^*(\tilde{b}_j(g)\otimes
    \tilde{\eta}_j(s(g))^*)=\alpha_{g}(\beta_{g}(\tilde{\xi}_j(r(g))\otimes
    \tilde{b}_j(g))\otimes \tilde{\eta}_j(s(g))^*).
  \end{multline*}
  Therefore, if we set
  \begin{gather*}
    a_i:=\alpha_{g_i}(\beta_{g_i}(\tilde{\xi}_i(r(g_i))\otimes
    \tilde{b}_i(g_i))\otimes \tilde{\eta}_i(s(g_i))^*),\\
    u_i:=\alpha_{g_i}(\beta_{g_i}(\tilde{\xi}_j(r(g_i))\otimes
    \tilde{b}_j(g_i))\otimes \tilde{\eta}_j(s(g_i))^*),\\
    a:=\alpha_{g}(\beta_{g}(\tilde{\xi}(r(g))\otimes
    \tilde{b}(g))\otimes \tilde{\eta}(s(g))^*)\quad\text{and}\\
    u:=\alpha_{g}(\beta_{g}(\tilde{\xi}_j(r(g))\otimes
    \tilde{b}_j(g))\otimes \tilde{\eta}_j(s(g))^*),
  \end{gather*}
  then the following hold: $u_i\to u$, $k(u_i)=g_i=k(a_i)$,
  $\Vert a-u\Vert<\varepsilon$, and $\Vert a_i-u_i\Vert <\varepsilon$
  for large $i$. Therefore \cite[Proposition~C.20]{wil:crossed}
  implies that $a_i\to a$, that is
  \[
  \alpha_{g_i}(\beta_{g_i}(\tilde{\xi}_i(r(g_i))\otimes
  \tilde{b}_i(g_i))\otimes \tilde{\eta}_i(s(g_i))^*) \to
  \alpha_{g}(\beta_{g}(\tilde{\xi}(r(g))\otimes \tilde{b}(g))\otimes
  \tilde{\eta}(s(g))^*).
  \]
  Hence the action of $G$ on $\K(V)$ is continuous and
  $(\K(V),G,\alpha)$ is a groupoid dynamical system.
\end{proof}

\begin{remark}\label{rem:continuous}
  Note that if we replace ``upper semicontinuous'' with ``continuous''
  in the hypothesis of Lemma~\ref{lemm1}, Lemma~\ref{lemm2}, and
  Theorem~\ref{thm:main}, then $\nu:V\to \go$ is a continuous Banach
  bundle, $k:\K(V)\to\go$ is a (continuous) $C^*$-bundle
  \cite[Theorem~II.13.18]{fd:representations1} and, hence,
  $(\K(V),G,\alpha)$ is a continuous groupoid dynamical system (in the
  sense of \cite{ren:jot87,ren:jot91}).
\end{remark}

We let $\sigma:\K(V)\rtimes_\alpha G\to G$ be the semi-direct crossed
product Fell bundle. Recall from \eqref{eq:Fellbdldynsyst} that
$\K(V)\rtimes_\alpha G=r^*\K(V)$ with the multiplication given by
$(T,g)(S,h)=(T\alpha_g(S),gh)$ and
$(T,g)^*=(\alpha_{g}^{-1}(T^*),g^{-1})$. Our main result shows that
$\B$ and $\K(V)\rtimes_\alpha G$ are equivalent Fell bundles.

\begin{theorem}\label{thm:main}
  For $g\in G$ let $E(g)=V(r(g))\otimes_{A(r(g))} B(g)$, let
  $\E=\bigsqcup_{g\in G} E(g)$, and let $q:\E\to G$ be the projection
  map. Then $q:\E\to G$ is an upper-semicontinuous Banach bundle over
  $G$ and a $(\K(V)\rtimes_\alpha G)$--$\B$ equivalence.
\end{theorem}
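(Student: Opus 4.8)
The plan is to establish in turn the three assertions bundled into the statement: that $\E$ carries an upper-semicontinuous Banach-bundle topology over $G$; that there are commuting left $\K(V)\rtimes_\alpha G$- and right $\B$-actions on $\E$ together with the two inner products; and that, with these, each $E(g)$ is a $\K(V(r(g)))$--$A(s(g))$-imprimitivity bimodule satisfying the axioms (a)--(d) of a Fell-bundle equivalence over the trivial $G$--$G$ equivalence $G$. Here $\K(V(r(g)))$ is the fibre of $\K(V)\rtimes_\alpha G$ over the unit $r(g)$ and $A(s(g))=B(s(g))$ is the fibre of $\B$ over the unit $s(g)$, so the bimodule requirement is automatic: $E(g)=V(r(g))\otimes_{A(r(g))}B(g)$ is the interior tensor product of the imprimitivity bimodules $V(r(g))$ and $B(g)$. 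The working principle for every identity below is to transport it through the isomorphisms $\beta_g$ of Lemma~\ref{lem:isom_HM} and reduce to \eqref{eq:2}, \eqref{eq:7}, \eqref{eq:8}, to left-invariance of the Haar system, and to saturation of $\B$ (which gives the multiplication isomorphism $B(g)\otimes_{A(s(g))}B(h)\cong B(gh)$), following the continuous \'etale case of \cite[\S4]{kum:pams98} and \cite[\S4]{muh:cm01}.

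For the bundle structure, to each pair $\zeta,f\in\Gamma_c(G;\B)$ associate the section $\omega_{\zeta,f}$ of $\E$ with $\omega_{\zeta,f}(g)=\widehat\zeta(r(g))\otimes f(g)$, where $\widehat\zeta$ is the continuous section of $V$ from Lemma~\ref{lemm1}. Since $\{\widehat\zeta(r(g))\}$ is dense in $V(r(g))$ and $\{f(g)\}=B(g)$ by the enough-sections hypothesis, finite sums of the $\omega_{\zeta,f}(g)$ are dense in $E(g)$. As in Lemma~\ref{lemm1}, it then remains to check that $g\mapsto\bigl\|\sum_i\omega_{\zeta_i,f_i}(g)\bigr\|$ is upper semicontinuous and to invoke the Hofmann--Fell theorem. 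Using the inner-product formula for the interior tensor product together with the fact that $\langle\widehat{\zeta_i}(r(g)),\widehat{\zeta_j}(r(g))\rangle_{A(r(g))}$ is the value at $r(g)$ of the section $\zeta_i^{*}*\zeta_j\in\Gamma_c(G;\B)$ (convolution; see \cite[Corollary~1.4]{muhwil:dm08}), the square of that norm equals $\bigl\|\sum_{i,j}f_i(g)^*(\zeta_i^{*}*\zeta_j)(r(g))f_j(g)\bigr\|$, which is the composition of a continuous map $G\to\B$ with the upper-semicontinuous norm, hence upper semicontinuous. This gives the asserted topology; equivalently, the $\beta_g$ assemble into a homeomorphism $\E\cong s^{*}V$, convenient for the continuity arguments below.

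For the module and inner-product structure, set $(\xi\otimes b)\cdot b'=\xi\otimes bb'\in E(gh)$ for $b'\in B(h)$ with $s(g)=r(h)$ (well-definedness is balancing over $A(r(g))$); for $(T,h)\in\K(V)\rtimes_\alpha G$ with $s(h)=r(g)$ define the left action by $(T,h)\cdot e=(T\otimes\operatorname{Id}_{B(hg)})(\Theta_h e)$, where $\Theta_h\colon E(g)=V(s(h))\otimes_{A(s(h))}B(g)\to V(r(h))\otimes_{A(r(h))}B(hg)=E(hg)$ is the composite of $\beta_h^{-1}\otimes\operatorname{Id}_{B(g)}$ with the saturation isomorphism. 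Put $\langle\xi\otimes b,\eta\otimes c\rangle_\B=b^*\langle\xi,\eta\rangle_{A(r(g))}c\in B(g^{-1}g')$ when $r(g)=r(g')$, and $_{\K(V)\rtimes_\alpha G}\langle e,f\rangle=\bigl(\alpha_g({}_{\K(V(s(g)))}\langle\beta_g e,\beta_{g'}f\rangle),\,gg'^{-1}\bigr)$ when $s(g)=s(g')$; the latter is the only consistent choice, because by \eqref{eq:8} the map $\beta_g$ intertwines the left $\K$-actions via $\alpha_g$, so over a unit it returns the $\K(V(r(g)))$-valued inner product of $E(g)$. All four structure maps land in the fibres prescribed by the equivalence axioms (e.g. $b^*\langle\xi,\eta\rangle c\in B(g^{-1})A(r(g))B(g')\subseteq B(g^{-1}g')$), and axioms (a)--(d)---in particular the associativity ${}_{\K(V)\rtimes_\alpha G}\langle e,f\rangle\cdot h=e\cdot\langle f,h\rangle_\B$, commutation of the two actions, and the homomorphism property $\bigl((T_1,h_1)(T_2,h_2)\bigr)\cdot e=(T_1,h_1)\cdot\bigl((T_2,h_2)\cdot e\bigr)$---follow on elementary tensors from \eqref{eq:2}, \eqref{eq:7}, \eqref{eq:8}, left-invariance, and saturation, just as in the proof of the Proposition preceding this theorem.

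The main obstacle is the continuity of these four structure maps for the topology just constructed on $\E$. Each is handled by the net-convergence criterion \cite[Proposition~C.20]{wil:crossed}, following the template already used in that Proposition: reduce to nets of the generating sections $\omega_{\zeta,f}$ (and of sections of $\B$); replace the tails by a single fixed section agreeing with the limit at the point in question, using the vector-valued Tietze theorem and enough-sections; deduce convergence of the simplified expressions from continuity of $x\mapsto\langle\zeta,\eta\rangle_{A(x)}$, of the maps $\beta_g^{*}$, of multiplication in $\B$, and of the Haar system; and bound the error terms uniformly over a compact neighbourhood (for the left action it is cleanest to do this after transporting to $s^{*}V$). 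Granting these, all clauses of \cite[Definition~6.1]{muhwil:dm08} hold, so $q\colon\E\to G$ is an upper-semicontinuous Banach bundle and a $(\K(V)\rtimes_\alpha G)$--$\B$ equivalence.
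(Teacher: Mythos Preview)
Your proposal is correct and follows essentially the same route as the paper's proof: the same generating sections $\omega_{\zeta,f}(g)=\widehat\zeta(r(g))\otimes f(g)$ and the Hofmann--Fell theorem for the bundle topology, the same right $\B$-action, the same left action (your $(T\otimes\operatorname{Id})\circ\Theta_h$ unwinds to the paper's $(T,g)\cdot\beta_g(\xi\otimes a_1)\otimes a_2=(T\xi)\otimes(a_1a_2)$), and the same continuity arguments via \cite[Proposition~C.20]{wil:crossed}. Your left inner product $(\alpha_g({}_{\K(V(s(g)))}\langle\beta_g e,\beta_{g'}f\rangle),\,gg'^{-1})$ is equivalent to the paper's $(v\otimes\beta_{gh^{-1}}^*(ab^*\otimes w^*),\,gh^{-1})$ by the defining formula~\eqref{eq:1} together with~\eqref{eq:7}; your packaging has the pleasant side effect of making axiom~(b)(ii) almost immediate via $\alpha_{g'g^{-1}}\circ\alpha_g=\alpha_{g'}$, whereas the paper carries out that verification by a longer direct calculation. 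Your explicit norm computation for upper semicontinuity and the observation $\E\cong s^*V$ are useful additions not spelled out in the paper, but the overall architecture is the same.
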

\begin{proof}
  For $\xi\in \Gamma_c(G,\B)$ and $\eta\in \Gamma_c(G;\B)$, define
  $\hat{\xi}(x) := \xi|_{G_x}$ for $x\in\go$, and define a section
  $\xi\otimes\eta$ of $\E$ by
  \[
  (\xi\otimes\eta)(g)=\hat{\xi}(r(g))\otimes \eta(g).
  \]
  Then the set $\{\,\xi\otimes\eta\,:\xi,\eta\in \Gamma_c(G;\B)\}$
  satisfies the hypothesis of the Hofmann--Fell theorem. Hence there
  is a unique topology on $\E$ such that $q:\E\to G$ is an
  upper-semicontinuous Banach bundle such that the above sections are
  continuous.

  We show that $\E$ is a $(\K(V)\rtimes_\alpha G)$--$\B$
  equivalence. The right action of $\B$ on $\E$ is defined by
  \[
  (\xi\otimes a)\cdot b=\xi\otimes (ab),
  \]
  for $\xi\in V(r(g))$, $a\in B(g)$ and $b\in B(h)$, where
  $s(g)=r(h)$. It is easy to check that
  $q(\xi\otimes a\cdot b)=q(\xi\otimes a)p(b)$ and
  $\bigl(\xi\otimes a\cdot b\bigr)\cdot c=\xi\otimes a\cdot (bc)$.
  Moreover, it is a straightforward computation to show that
  $\Vert \xi\otimes a\cdot b\Vert\le \Vert \xi\otimes a\Vert \Vert
  b\Vert$
  using the $B(s(g))$-inner product on $E(g)$. Therefore the analogues
  for right actions of axioms (a)--(c) on page~40 of
  \cite{muhwil:dm08} are satisfied by the right action of $\B$ on $\E$
  (axiom~(c) contains a typographical error, and should read
  $\Vert b\cdot e\Vert \le \Vert b\Vert \cdot \Vert e\Vert$). The
  continuity of the action follows from a version of
  \cite[Proposition~C.20]{wil:crossed} for upper-semicontinuous Banach
  bundles.

  To define the left action of $\K(V)\rtimes_\alpha G$ on $\E$ we note
  that if $s(g)=r(h)$ then $V(r(h))\otimes_{A(r(h))}B(h)$ is
  isomorphic to $V(r(g))\otimes_{A(r(g))}B(gh)$. Indeed,
  $V(r(h))=V(s(g))$ is isomorphic to $V(r(g))\otimes_{A(r(g))} B(g)$
  by Lemma \ref{lem:isom_HM}, and multiplication induces an
  imprimitivity-bimodule isomorphism between
  $B(g)\otimes_{A(s(g))}B(h)$ and $B(gh)$ (see Lemma 1.2 of
  \cite{muhwil:dm08}). Moreover, $V(r(gh))=V(r(g))$.  Then for
  $(T,g)\in \K(V)\rtimes_\alpha G$,
  $\xi\otimes a_1\in V(r(g))\otimes_{A(r(g))}B(g)$, and $a_2\in B(h)$
  define
  \[
  (T,g)\cdot \beta_g(\xi\otimes a_1)\otimes a_2=(T \xi)\otimes
  (a_1a_2)\in V(r(gh))\otimes B(gh).
  \]
  Then
  $q((T,g)\cdot \beta_g(\xi\otimes a_1)\otimes
  a_2)=k(T,g)q(\beta_g(\xi\otimes a_1)\otimes a_2)=gh$.

  We must check that this left action of $\K(V)$ on $\E$ is continuous
  and satisfies axioms (a)--(c) on page~40 of
  \cite{muhwil:dm08}. Continuity follows once again from an upper
  semicontinuous Banach bundle version of
  \cite[Proposition~C.20]{wil:crossed}. Axiom~(a) is immediate from
  the definition.  We use equations \eqref{eq:7}~and~\eqref{eq:8} to
  check axiom~(b):
  \begin{align*}
    (S,t)\cdot \bigl[(T,g)\cdot\beta_{tg}(\xi\otimes b_1b_2)\otimes
    b_3\bigr]&=(S,t)\cdot \bigl[(T,g)\cdot
               \beta_g(\beta_t(\xi\otimes b_1)\otimes b_2)\otimes
               b_3\bigr]\\
             &=(S,t)\cdot T\beta_t(\xi\otimes b_1)\otimes b_2b_3\\
             &=(S,t)\cdot \beta_t(\alpha_t(T)\xi\otimes b_1)\otimes b_2b_3\\
             &=S\alpha_t(T)\xi\otimes b_1b_2b_3\\
             &=(S\alpha_t(T),tg)\cdot
               \beta_{tg}(\xi\otimes b_1b_2)\otimes b_3.
  \end{align*}
  One can easily show that
  $\Vert (T,g)\cdot \beta_g(\xi\otimes a_1)\otimes a_2\Vert \le \Vert
  T\Vert \Vert \beta_g(\xi\otimes a_1)\otimes a_2\Vert$
  using the right $A(s(g))$-inner product and the fact that $\beta_g$
  is an isometry. Therefore axiom~(c) on page 40 of \cite{muhwil:dm08}
  holds for the left action of $\K(V)$ on $\E$.

  Now we have to check that these actions of $\K(V)$ and $\B$ on $\E$
  satisfy~(a), (b)(i)--(b)(iv) and~(c) of
  \cite[Definition~6.1]{muhwil:dm08}.

  Definition~6.1(a) of \cite{muhwil:dm08} requires that the two
  actions commute, which is straightforward:
  \[
  \bigl((T,g)\cdot \beta_g(\xi\otimes a_1)\otimes a_2\bigr)\cdot
  b=(T,g)\cdot \bigl(\beta_g(\xi\otimes a_1)\otimes a_2\cdot b\bigr).
  \]

  To check (b)(i)--(b)(iv), we must first define sesquilinear maps
  $_{\K(V)\rtimes_{\alpha}G}\langle\cdot,\cdot\rangle$ from $\E *_s\E$
  to $\K(V)\rtimes_{\alpha}G$ and $\langle\cdot,\cdot\rangle_{\B}$
  from $\E *_r \E$ to $\B$. Let $g,h\in G$ such that $r(g)=r(h)$ and
  let $v\otimes a\in E(g)$ and $w\otimes b\in E(h)$. Since
  $r(g)=r(h)$, we have $v,w\in V(r(h))=V(r(g))$. Define
  \[
  \langle v\otimes a,w\otimes b\rangle_{\B}:=a^*\langle
  v,w\rangle_{A(r(h))}b\in B(g^{-1}h).
  \]
  Let $g,h\in G$ be such that $s(g)=s(h)$ and let $v\otimes a\in E(g)$
  and $w\otimes b\in E(h)$. Notice that $w\in V(r(h))=V(s(gh^{-1}))$
  and $v\in V(r(g))=V(r(gh^{-1}))$. Define
  \[
  _{\K(V)\rtimes_{\alpha}G}\langle v\otimes a,w\otimes
  b\rangle:=\bigl(v\otimes \beta_{gh^{-1}}^*(ab^*\otimes
  w^*),gh^{-1}\bigr).
  \]
  It is a routine albeit tedious task to check that these maps satisfy
  Definition~6.1(b)(i)--(b)(iv) of \cite{muhwil:dm08}; we just prove
  some of them to indicate the sorts of arguments
  involved. For~(b)(i),
  \[
  p(\langle v\otimes a,w\otimes b\rangle_{\B})=g^{-1}h
  \]
  and
  \[
  \sigma( _{\K(V)\rtimes_{\alpha}G}\langle v\otimes a,w\otimes
  b\rangle)=gh^{-1}.
  \]
  The proof of~(b)(ii) is relatively easy for the $\B$-valued
  sesquilinear form and more involved for the
  $\K(V)\rtimes_{\alpha}G$-valued sesquilinear form; we check
  both. First take $g,h\in G$ such that $r(g)=r(h)$ and let
  $v\otimes a\in E(g)$ and $w\otimes b\in E(h)$. Then
  \[
  \langle v\otimes a,w\otimes b\rangle_{\B}^* =\left(a^*\langle
    v,w\rangle_{A(r(h))}b\right)^* =b^*\langle w,v\rangle_{A(r(g))}a
  =\langle w\otimes b,v\otimes a\rangle_{\B}.
  \]
  Now fix $g,h\in G$ such that $s(g)=s(h)$ and take
  $v\otimes a\in E(g)$ and $w\otimes b\in E(h)$. Then
  \begin{align*}
    \left(_{\K(V)\rtimes_{\alpha}G}\langle v\otimes a,w\otimes
    b\rangle\right)^*
    &=\bigl(v\otimes \beta_{gh^{-1}}^*(ab^*\otimes
      w^*),gh^{-1}\bigr)^*\\
    &=\big(\alpha_{gh^{-1}}^{-1}\left(\big(v\otimes
      \beta_{gh^{-1}}^*(ab^*\otimes w^*)\big)^*\right),hg^{-1}\big)\\
    &=\big(\alpha_{gh^{-1}}^{-1}\big(\beta_{gh^{-1}}(w\otimes ba^*)\otimes
      v^*\big),hg^{-1}\big)\\
    \intertext{which, by the definition of $\alpha$, is}
    &=(\alpha_{gh^{-1}}^{-1}(\alpha_{gh^{-1}}(w\otimes
      \beta_{hg^{-1}}^*(ba^*\otimes v^*))),hg^{-1})\\
    &=(w\otimes   \beta_{hg^{-1}}^*(ba^*\otimes v^*))),hg^{-1})\\
    &=_{\K(V)\rtimes_{\alpha}G}\langle w\otimes b,v\otimes a\rangle.
  \end{align*}
  The remaining axioms (b)(iii), (b)(iv)~and~(c) of \cite[Definition
  6.1]{muhwil:dm08} are easy to prove.  Hence $\E$ is a
  $\K(V)\rtimes_\alpha G$--$\B$ equivalence.
\end{proof}

\begin{corollary}\label{cor:morequiv}
  With the notation of Theorem~\ref{thm:main}, $C^*(G;\B)$ and
  $C^*(G; \K(V)\rtimes_\alpha G)$ are Morita equivalent and so are
  $C^*_{\red}(G;\B)$ and $C^*_{\red}(G; \K(V)\rtimes_\alpha G)$.
\end{corollary}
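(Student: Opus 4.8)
The plan is to read the statement off directly from Theorem~\ref{thm:main} together with the general principle, recalled in the background section, that equivalent Fell bundles have Morita equivalent $C^*$-algebras. Concretely, Theorem~\ref{thm:main} provides the upper-semicontinuous Banach bundle $q:\E\to G$ and verifies that it is a $(\K(V)\rtimes_\alpha G)$--$\B$ equivalence in the sense of \cite[Definition~6.1]{muhwil:dm08}, with $G$ (which is trivially a $G$--$G$ equivalence of itself) serving as the underlying space. Since $G$ is second countable, Hausdorff, locally compact and equipped with a Haar system, and since $\B$ and hence $\K(V)\rtimes_\alpha G$ satisfy the standing saturation and second-countability hypotheses, the hypotheses of \cite[Theorem~6.4]{muhwil:dm08} are met. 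That theorem then produces a $C^*(G;\K(V)\rtimes_\alpha G)$--$C^*(G;\B)$ imprimitivity bimodule, so the full $C^*$-algebras are Morita equivalent.

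For the reduced algebras the same equivalence bundle $\E$ feeds into the reduced-algebra version of this construction: by \cite{SimWil_NYJM13} (and \cite{kum:pams98} in the \'etale case) an equivalence of Fell bundles over locally compact Hausdorff groupoids with Haar systems induces a Morita equivalence of the associated reduced $C^*$-algebras. Applying this to $\E$ yields that $C^*_{\red}(G;\B)$ and $C^*_{\red}(G;\K(V)\rtimes_\alpha G)$ are Morita equivalent, completing the proof.

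There is essentially no obstacle here beyond bookkeeping: the genuine work has already been done in Theorem~\ref{thm:main}, where one checks the axioms of \cite[Definition~6.1]{muhwil:dm08}, and the corollary is then a formal consequence of the two cited equivalence theorems. The only point requiring a moment's care is to confirm that the standing hypotheses on $\B$ and $G$ (second countability, saturation, enough sections, and the existence of a Haar system) are precisely those required as input to \cite[Theorem~6.4]{muhwil:dm08} and to \cite{SimWil_NYJM13}, which they are.
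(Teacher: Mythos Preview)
Your proof is correct and matches the paper's approach exactly: invoke Theorem~\ref{thm:main} together with \cite[Theorem~6.4]{muhwil:dm08} for the full $C^*$-algebras, and Theorem~\ref{thm:main} together with \cite{SimWil_NYJM13} for the reduced ones. The paper's proof is the same two-line citation argument, without the additional commentary on hypotheses.
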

\begin{proof}
  Theorem~\ref{thm:main} combined with Theorem~6.4 of
  \cite{muhwil:dm08} implies that $C^*(G;\B)$ and
  $C^*(G; \K(V)\rtimes_\alpha G)$ are Morita equivalent. The second
  assertion follows from Theorem~\ref{thm:main} and
  \cite[Theorem~14]{SimWil_NYJM13}.
\end{proof}

Our next corollary presents one of the many possible applications of
Theorem~\ref{thm:main} and Corollary~\ref{cor:morequiv}. The extra
hypothesis about continuity of the Fell bundle is needed in order to
cite the results of \cite{ren:jot91} which were proved in the context
of \emph{continuous} groupoid dynamical systems. Recall that if
$p:\B\to G$ is a continuous Fell bundle then $G$ acts continuously on
the primitive-ideal space of $A=\Gamma_0(\go,\B)$ with its Polish
\emph{regularized topology}\footnote{The regularized topology is
  defined in \cite[Definition~H.38]{wil:crossed} is
  Polish by \cite[Theorem~H.39]{wil:crossed}.} via equation
\eqref{eq:action} \cite[Proposition~1.14]{ren:jot91}. When we say
``$G$ acts amenably on $\Prim A$'' we require the existence of a net
of functions as in \cite[Remark~3.7]{ren:jot91}.  For this it suffices
for the Borel groupoid $\Prim A\times G$ to be measurewise amenable or
for $G$ itself to be amenable. We say that the action of $G$ on
$\Prim A$ is \emph{essentially free} if the set of points with trivial
isotropy is dense in every closed invariant set for the regularized
topology on $\Prim A$.

\begin{corollary}\label{cor:lat}
  Let $G$ be a locally compact Hausdorff groupoid and let $p:\B\to G$
  be a continuous Fell bundle. Let $A$ be the $C^*$-algebra over
  $\go$. Assume that the action of $G$ on $\Prim A$ is amenable and
  essentially free. Then the lattice of ideals of $C^*(G;\B)$ is
  isomorphic to the lattice of invariant open sets of $\Prim A$.
\end{corollary}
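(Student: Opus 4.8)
The plan is to deduce Corollary~\ref{cor:lat} by transporting Renault's ideal-lattice description for groupoid crossed products across the Morita equivalence furnished by Corollary~\ref{cor:morequiv}. First I would invoke Corollary~\ref{cor:morequiv}: since $\B$ is a continuous Fell bundle, Remark~\ref{rem:continuous} guarantees that $(\K(V),G,\alpha)$ is a \emph{continuous} groupoid dynamical system, so we are in exactly the setting of \cite{ren:jot91}. The Morita equivalence $C^*(G;\B)\sim_{\mathrm{M}} C^*(G;\K(V)\rtimes_\alpha G)$ induces a lattice isomorphism between the ideal lattices of the two $C^*$-algebras via the Rieffel correspondence, so it suffices to identify the ideal lattice of $C^*(G;\K(V)\rtimes_\alpha G)$ with the lattice of open invariant subsets of $\Prim A$.

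Next I would check that the hypotheses of Corollary~\ref{cor:lat} translate into the hypotheses of the relevant theorem of \cite{ren:jot91} (the one giving a lattice isomorphism between ideals of the crossed product and open invariant subsets of the spectrum of the coefficient algebra, under amenability and essential freeness / topological principality). The coefficient algebra of the dynamical system is $\K(V)\cong\Gamma_0(\go;\K(V))$, and by Lemma~\ref{lemm2} this is Morita equivalent to $A=\Gamma_0(\go;\B)$ via the imprimitivity bimodule $\V$, fibrewise via $V(x)$. The key point is that the Rieffel homeomorphism $\Prim\K(V)\xrightarrow{\ \cong\ }\Prim A$ is $G$-equivariant: the action of $G$ on $\Prim\K(V)$ is induced fibrewise by the isomorphisms $\alpha_g:\K(V(s(g)))\to\K(V(r(g)))$, while the action of $G$ on $\Prim A$ is given by \eqref{eq:action} through the Rieffel homeomorphisms $h_g$ attached to the imprimitivity bimodules $B(g)$; and since $\beta_g:V(r(g))\otimes_{A(r(g))}B(g)\to V(s(g))$ is an isomorphism of Hilbert modules intertwining $\alpha_g$ with the identity (equation \eqref{eq:8}), the composite Rieffel correspondences match up. Hence the $G$-space $\Prim\K(V)$ is equivariantly homeomorphic to the $G$-space $\Prim A$, so amenability and essential freeness of the $G$-action on $\Prim A$ are equivalent to the same properties for the $G$-action on $\Prim\K(V)$, and the lattices of open invariant subsets coincide.

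Finally, since the action of $G$ on $\Prim\K(V)$ is amenable and essentially free, Renault's theorem \cite[Theorem~4.6 and Corollary~4.9]{ren:jot91} (together with the identification of $\Prim$ of the coefficient algebra with its regularized topology, \cite[Proposition~1.14]{ren:jot91}) gives a lattice isomorphism between the ideals of $C^*(G;\K(V)\rtimes_\alpha G)$ and the open invariant subsets of $\Prim\K(V)$. Composing with the equivariant homeomorphism $\Prim\K(V)\cong\Prim A$ and the Rieffel-correspondence lattice isomorphism from the Morita equivalence of Corollary~\ref{cor:morequiv} yields the desired isomorphism between the ideal lattice of $C^*(G;\B)$ and the lattice of open invariant subsets of $\Prim A$.

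I expect the main obstacle to be the equivariance of the Rieffel homeomorphism $\Prim\K(V)\cong\Prim A$ — i.e.\ verifying carefully that the fibrewise $\alpha_g$ correspond, under the Rieffel correspondence coming from the bimodules $V(x)$, to the maps $h_g$ defining the $G$-action on $\Prim A$. This is where equation \eqref{eq:8}, the relation $\beta_{g^{-1}}(\beta_g(\xi\otimes b_1)\otimes b_2^*)=\xi\cdot(b_1b_2^*)$ from \eqref{eq:2}, and the balancing of tensor products over $A(r(g))$ do the real work; once that compatibility is in hand, everything else is an application of the cited results of Renault and the general theory of the Rieffel correspondence.
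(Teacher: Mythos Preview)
Your proposal is correct and follows essentially the same route as the paper: pass to the continuous dynamical system $(\K(V),G,\alpha)$ via Remark~\ref{rem:continuous} and Corollary~\ref{cor:morequiv}, verify that the Rieffel homeomorphism $\Prim A\cong\Prim\K(\V)$ is $G$-equivariant so that amenability and essential freeness transfer, and then apply \cite[Corollary~4.9]{ren:jot91}. The only cosmetic difference is that where you sketch the equivariance argument using $\beta_g$ and equations~\eqref{eq:2} and~\eqref{eq:8}, the paper simply cites \cite[Formula~(1) on page~1247]{ionwil:hjm11} for the same fact.
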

\begin{proof}
  Since $\V$ is a $\K(\V)$--$A$-imprimitivity bimodule, it follows
  that $\K(\V)$ and $A$ are Morita equivalent. Let
  $h:\Prim A\to\Prim \K(\V)$ be the Rieffel correspondence (see, for
  example, \cite[Corollary~3.3]{rw:morita}). Then, from the definition
  of $\V$ and $\K(\V)$ and \cite[Formula (1) on page
  1247]{ionwil:hjm11} it follows that if $P\in \Prim \K(\V)$ and
  $g\in G$ then $g\cdot P=h(g\cdot h^{-1}(P))$. Therefore the action
  of $G$ on $\Prim \K(\V)$ is amenable and essentially free.  The
  result follows from \cite[Corollary~4.9]{ren:jot91}, Remark
  \ref{rem:continuous} and Corollary~\ref{cor:morequiv}.
\end{proof}

\begin{remark}
  Corollary~\ref{cor:lat} provides an alternative proof of the
  following result, which was proved under slightly stronger
  conditions in \cite[Corollary~4.7]{KuPaSi_Sim2014}.
\end{remark}

\begin{corollary}\label{cor:sim}
  Let $G$ be a Hausdorff locally compact groupoid and let $p:\B\to G$
  be a continuous Fell bundle.  Assume that the action of $G$ on
  $\Prim A$ is amenable and essentially free. Then $C^*(G;\B)$ is
  simple if and only if the action of $G$ on $\Prim A$ is minimal.
\end{corollary}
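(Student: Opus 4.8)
The plan is to read this off directly from Corollary~\ref{cor:lat}. The key observation is that a $C^*$-algebra is simple precisely when its lattice of closed two-sided ideals has exactly two elements, namely $\{0\}$ and the algebra itself (so, in particular, a simple $C^*$-algebra is nonzero). On the other side, the action of $G$ on $\Prim A$ is minimal precisely when $\Prim A$ is nonempty and its only $G$-invariant open subsets are $\emptyset$ and $\Prim A$ — equivalently, when the lattice of $G$-invariant open subsets of $\Prim A$ has exactly two elements. So the corollary will follow once we know that these two lattices are isomorphic, since an order isomorphism of lattices preserves the bottom and top elements and in particular the number of elements.

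First I would invoke Corollary~\ref{cor:lat}: under the standing hypotheses (continuous Fell bundle, amenable and essentially free action), that corollary supplies an isomorphism between the lattice of ideals of $C^*(G;\B)$ and the lattice of $G$-invariant open subsets of $\Prim A$. Hence the ideal lattice of $C^*(G;\B)$ has exactly two elements if and only if the lattice of invariant open subsets of $\Prim A$ does. Combining this with the two reformulations above yields the chain: $C^*(G;\B)$ is simple $\iff$ the lattice of $G$-invariant open subsets of $\Prim A$ has exactly two elements $\iff$ the action of $G$ on $\Prim A$ is minimal.

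The only point calling for a word of care is the degenerate case $A=\Gamma_0(\go;\B)=0$. Since $\B$ is saturated and each fibre $B(g)$ is an $A(r(g))$--$A(s(g))$ imprimitivity bimodule (so its left and right inner products are full), $A=0$ forces every fibre of $\B$ to vanish, whence $C^*(G;\B)=0$ is not simple; and correspondingly $\Prim A=\emptyset$, so the action is not minimal, so both sides of the asserted equivalence fail. When $A\neq0$ we have $\Prim A\neq\emptyset$ and the argument above applies verbatim. I do not anticipate any genuine obstacle here: all the substance has already been carried out in Corollary~\ref{cor:lat} (via \cite{ren:jot91}, Remark~\ref{rem:continuous}, and Corollary~\ref{cor:morequiv}), and this corollary is just the remark that ``exactly two ideals'' translates, under that lattice isomorphism, into ``exactly two invariant open sets.''
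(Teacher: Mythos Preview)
Your argument is correct and is exactly the approach implicit in the paper: Corollary~\ref{cor:sim} is stated without proof immediately after the remark that it is a consequence of Corollary~\ref{cor:lat}, and your translation of ``simple'' and ``minimal'' into the two-element lattice condition is precisely the intended one-line deduction. The degenerate-case check you include is a harmless extra precaution.
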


Let $G$ be an \emph{\'{e}tale} locally compact groupoid and suppose
that the interior of the isotropy $\Iso^{\circ}(G)$ is closed.  Then
$G/\Iso^{\circ}(G)$ is also an \'{e}tale locally compact groupoid
\cite[Proposition~2.5(d)]{simwil:xx15}.

\begin{corollary}\label{cor:push-forward}
  Let $G$ be an \'{e}tale amenable locally compact groupoid, let
  $\sigma \in Z^2(G, \T)$ be a continuous 2-cocycle and suppose that
  the interior of the isotropy $\Iso^{\circ}(G)$ is closed. Suppose
  that $G/\Iso^{\circ}(G)$ is essentially principal. Then there is
  a continuous Fell bundle $p: \B \to G/\Iso^{\circ}(G)$ such that
  \[
  C^*(G/\Iso^{\circ}(G), \B) \cong C^*(G, \sigma) \quad
  \text{and}\quad C^*(\go, \B) \cong C^*(\Iso^{\circ}(G), i^*(\sigma)).
  \]
  The action of $G/\Iso^{\circ}(G)$ on
  $\Prim C^*(\Iso^{\circ}(G), i^*(\sigma))$ is essentially principal and
  the map which takes an ideal of $C^*(\Iso^{\circ}(G), i^*(\sigma))$ to
  the ideal of $C^*(G, \sigma)$ generated by its image induces an
  isomorphism from the lattice of
  $\big(G/\Iso^{\circ}(G)\big)$-invariant open sets of
  $\Prim C^*(\Iso^{\circ}(G), i^*(\sigma))$ to the lattice of ideals of
  $C^*(G, \sigma)$.
\end{corollary}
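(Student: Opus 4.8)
The strategy is to realize $C^*(G,\sigma)$ as the section algebra of a \emph{continuous} saturated Fell bundle over $H:=G/\Iso^{\circ}(G)$ and then apply Corollary~\ref{cor:lat}. Write $N:=\Iso^{\circ}(G)$ and let $\pi\colon G\to H$ denote the quotient map. Under our hypotheses $N$ is a clopen normal bundle of groups over $\go$ and, since $G$ is \'etale, so is $H$; moreover each fibre $\pi^{-1}(h)$ is a coset $\gamma N_{s(\gamma)}$ for any $\gamma$ with $\pi(\gamma)=h$. I would define $B(h)$ to be the completion of $C_c(\pi^{-1}(h))$ under the $C^*(N_{r(h)},\sigma)$-valued and $C^*(N_{s(h)},\sigma)$-valued pre-inner products inherited from the $\sigma$-twisted convolution and involution on $C_c(G)$, so that $B(h)$ becomes a $C^*(N_{r(h)},\sigma)$--$C^*(N_{s(h)},\sigma)$ imprimitivity bimodule; the fibrewise products $B(h_1)\times B(h_2)\to B(h_1h_2)$ and the involutions are again induced by the $\sigma$-twisted operations on $C_c(G)$. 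This makes $p\colon\B:=\bigsqcup_{h\in H}B(h)\to H$ a saturated Fell bundle, and it is \emph{continuous}, rather than merely upper-semicontinuous, precisely because $N$ is clopen, so that the field $h\mapsto B(h)$ varies continuously.

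Next I would check the two displayed isomorphisms. Over the units $\pi^{-1}(x)=N_x$, so $B(x)=C^*(N_x,\sigma)$ and hence $A:=\Gamma_0(\go;\B)\cong C^*(\Iso^{\circ}(G),i^{*}\sigma)$ directly from the construction, where $i\colon\Iso^{\circ}(G)\hookrightarrow G$ is the inclusion. For the global isomorphism, extension by zero across the clopen cosets embeds $C_c(G)$, with its $\sigma$-twisted operations, densely into $\Gamma_c(H;\B)$, and a Fubini-type disintegration---integrating over $H$ and then over the cosets equals integrating over $G$---shows this is a $*$-isomorphism onto $\Gamma_c(H;\B)$ carrying the universal norm of $C^*(G,\sigma)$ to the Fell-bundle norm; hence $C^*(G,\sigma)\cong C^*(H;\B)$ while $C^*(\go,\B)=A\cong C^*(\Iso^{\circ}(G),i^{*}\sigma)$.

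It remains to verify the hypotheses of Corollary~\ref{cor:lat} for $\B$. Continuity was arranged above. As a quotient of the amenable groupoid $G$, the groupoid $H$ is amenable (amenability passes to quotients), so the induced action of $H$ on $\Prim A$ is amenable. The delicate point---which I expect to be the main obstacle---is essential freeness of the action of $H$ on $\Prim A$, which I would deduce from essential principality of $H$. The action of $H$ on $\Prim A=\Prim C^*(\Iso^{\circ}(G),i^{*}\sigma)$ covers the action of $H$ on $\go$ along the canonical continuous, open, equivariant map $\sigma_A\colon\Prim A\to\go$ classifying the $C_0(\go)$-algebra structure of $A$ (cf.\ the proof of Lemma~\ref{lemm2}); and the isotropy group at a unit $x$ of any $P\in\Prim A$ lying over $x$ is contained in $\Iso(H)_x$, so every $P$ over a unit with trivial $H$-isotropy has trivial isotropy. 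One then transfers density of the set of units with trivial isotropy inside each closed invariant subset of $\go$ through $\sigma_A$---using equivariance and openness of $\sigma_A$, since density does not descend to closed subsets automatically---to conclude that the points of $\Prim A$ with trivial isotropy are dense in every closed invariant subset; that is, the action of $H$ on $\Prim C^*(\Iso^{\circ}(G),i^{*}\sigma)$ is essentially principal.

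With the hypotheses in place, Corollary~\ref{cor:lat} yields a lattice isomorphism between the ideals of $C^*(H;\B)$ and the invariant open subsets of $\Prim A$; composing with $C^*(H;\B)\cong C^*(G,\sigma)$ and $A\cong C^*(\Iso^{\circ}(G),i^{*}\sigma)$ gives the asserted isomorphism. Finally, to identify it with ``the ideal of $C^*(G,\sigma)$ generated by the image of an ideal of $C^*(\Iso^{\circ}(G),i^{*}\sigma)$,'' I would unwind the correspondence of Corollary~\ref{cor:lat}---which factors through the Morita equivalence of Corollary~\ref{cor:morequiv} and the Rieffel correspondence $\Prim A\cong\Prim\K(\V)$---and use that $\go$ is clopen in the \'etale groupoid $H$, so that $A=\Gamma_c(\go;\B)\subseteq\Gamma_c(H;\B)$ by extension by zero; under $\Gamma_c(H;\B)\cong C_c(G)$ this is the inclusion $C_c(\Iso^{\circ}(G))\hookrightarrow C_c(G)$, and tracking an invariant open set $U\subseteq\Prim A$ (equivalently an ideal of $A$) through the equivalence shows the matching ideal of $C^*(G,\sigma)$ is exactly the one generated by the image of the corresponding ideal of $C^*(\Iso^{\circ}(G),i^{*}\sigma)$.
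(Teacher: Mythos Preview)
Your overall strategy coincides with the paper's: build a Fell bundle $\B$ over $H=G/\Iso^{\circ}(G)$ with $C^*(H;\B)\cong C^*(G,\sigma)$ and $C^*(\go;\B)\cong C^*(\Iso^{\circ}(G),i^*\sigma)$, then feed it into Corollary~\ref{cor:lat}. The explicit construction of $B(h)$ as a completion of $C_c(\pi^{-1}(h))$ is exactly what the paper outsources to \cite[Proposition~4.2]{KuPaSi_Sim2014}, so that part is fine.

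The genuine gap is your justification of \emph{continuity} of $\B$. The claim that $\B$ is continuous ``precisely because $N$ is clopen'' is not enough. What must be shown is that for a section $\xi$ of the bundle of unit fibres, the map $x\mapsto\|\xi_x\|_{C^*(N_x,\sigma)}$ is lower-semicontinuous (upper-semicontinuity is automatic). This is not a formal consequence of $N$ being clopen: the fibre norm is a $C^*$-norm on a twisted group algebra, not a sup-norm on a coset, and lower-semicontinuity can genuinely fail without amenability. The paper isolates this step as Lemma~\ref{lem:continuous bundle}: one uses amenability of $\Iso^{\circ}(G)$ to identify the full and reduced fibre algebras, then realizes each fibre faithfully via the left-regular representation on the Hilbert-module bundle $L^2(\B)$; since the inner products land in $C^*(\go,\A)=C_0(\go)$, the function $y\mapsto\|\lambda_y(\xi)h_y\|$ is honestly continuous, which yields the required lower bound on $\|\xi_y\|$. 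Without this, Corollary~\ref{cor:lat} (which rests on Renault's results for \emph{continuous} dynamical systems) does not apply.

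Your treatment of essential freeness of the $H$-action on $\Prim A$ is actually more explicit than the paper's, which simply asserts that the final statement follows from Corollary~\ref{cor:lat}. Your key observation---that the isotropy of $H$ at $P\in\Prim A$ is contained in the isotropy of $H$ at $\sigma_A(P)\in\go$---is correct, and once continuity of the bundle is established the map $\sigma_A$ is open, so the density transfer you describe can be made to work. But note that the restriction of $\sigma_A$ to an arbitrary closed invariant $X\subseteq\Prim A$ need not be open onto its image, so the argument needs a little more care than ``transfer density along an open map''; you should argue instead via the preimage $\sigma_A^{-1}(D)\cap X$, where $D$ is the set of units with trivial $H$-isotropy, and use that $D$ is dense in $\overline{\sigma_A(X)}$.
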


The proof of Corollary~\ref{cor:push-forward} requires a lemma.

\begin{lemma}\label{lem:continuous bundle}
Let $G$ be an \'{e}tale amenable locally compact groupoid such that $G = \Iso(G)$. Let
$\B$ be a Fell bundle over $G$ such that $\xi \mapsto \|\xi_x\|$ is continuous for $\xi
\in C^*(\go,\B)$. Then $C^*(G, \B)$ is the section algebra of a continuous field of
$C^*$-algebras over $\go$ such that $C^*(G, \B)_x \cong C^*(G_x, \B)$ for $x \in \go$.
\end{lemma}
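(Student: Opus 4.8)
The plan is to realize $C^{*}(G,\B)$ as the section algebra of a continuous field over $\go$ in three stages: first exhibit it as a $C_{0}(\go)$-algebra; then identify the fibre over $x$ with $C^{*}(G_{x},\B)$; and finally upgrade the resulting upper-semicontinuous $C^{*}$-bundle to a continuous one using the continuity hypothesis on $C^{*}(\go,\B)$. Several reductions are available at the outset: since $G$ is \'etale and Hausdorff, $\go$ is clopen in $G$; since $G=\Iso(G)$, each $G_{x}=s^{-1}(x)$ (which equals $G^{x}=r^{-1}(x)$) is a discrete group, and being a closed subgroupoid of the amenable groupoid $G$ it is amenable, so $C^{*}(G_{x},\B)=C^{*}_{\red}(G_{x},\B)$ carries a faithful conditional expectation onto $B(x)$.

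First I would record the $C_{0}(\go)$-algebra structure. Because $r=s$ on $G$, the left and right actions of $C_{0}(\go)$ on $\Gamma_{c}(G;\B)$ given by $(\phi\cdot\xi)(g)=\phi(r(g))\xi(g)=\phi(s(g))\xi(g)$ coincide, so they extend to a nondegenerate homomorphism of $C_{0}(\go)$ into the centre of $M(C^{*}(G,\B))$. By \cite[Theorem~C.26]{wil:crossed} this exhibits $C^{*}(G,\B)\cong\Gamma_{0}(\go;\mathcal{C})$ for an upper-semicontinuous $C^{*}$-bundle $\mathcal{C}\to\go$ with fibre $\mathcal{C}_{x}=C^{*}(G,\B)/\overline{I_{x}\cdot C^{*}(G,\B)}$, where $I_{x}=\{\phi\in C_{0}(\go):\phi(x)=0\}$; writing $q_{x}$ for the quotient map, $x\mapsto\|q_{x}(\xi)\|$ is then upper semicontinuous for every $\xi$.

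Next I would identify the fibre. Since $r=s$, the convolution on $\Gamma_{c}(G;\B)$, whose sum over $G^{r(g)}=G_{r(g)}$ involves only the restriction of a section to a single isotropy group, descends to a surjective $*$-homomorphism $\Gamma_{c}(G;\B)\to\Gamma_{c}(G_{x};\B)$; composing a representation of $C^{*}(G_{x},\B)$ with it yields a representation of $\Gamma_{c}(G;\B)$, so this map extends to a surjection $\pi_{x}\colon C^{*}(G,\B)\to C^{*}(G_{x},\B)$. Evidently $I_{x}\cdot C^{*}(G,\B)\subseteq\ker\pi_{x}$, so $\pi_{x}$ factors through $q_{x}$; what remains is the reverse inclusion $\ker\pi_{x}\subseteq\overline{I_{x}\cdot C^{*}(G,\B)}$, equivalently injectivity of the induced homomorphism $\mathcal{C}_{x}\to C^{*}(G_{x},\B)$. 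This is the main obstacle, and I would settle it by disintegration: a representation of $C^{*}(G,\B)$ annihilating $I_{x}\cdot C^{*}(G,\B)$ is precisely one in which $C_{0}(\go)$ acts by evaluation at $x$, so by the disintegration theorem for Fell-bundle $C^{*}$-algebras (\cite{ren:jot87,muhwil:nyjm08}) its quasi-invariant measure is $\delta_{x}$ and the representation is concentrated on $G_{x}$, whence it factors through $\pi_{x}$. Thus every representation of $\mathcal{C}_{x}$ factors through $\mathcal{C}_{x}\to C^{*}(G_{x},\B)$, which is therefore injective, so $\mathcal{C}_{x}\cong C^{*}(G_{x},\B)$ and $\|q_{x}(\xi)\|=\|\pi_{x}(\xi)\|$ for all $\xi$.

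Finally I would prove that $x\mapsto\|\pi_{x}(\xi)\|$ is continuous and conclude. Upper semicontinuity is now in hand, and for lower semicontinuity it suffices to treat $\xi\in\Gamma_{c}(G;\B)$, since $x\mapsto\|\pi_{x}(\xi_{n})\|$ converges uniformly to $x\mapsto\|\pi_{x}(\xi)\|$ along any sequence $\xi_{n}\to\xi$ in $\Gamma_{c}(G;\B)$. Fix $\xi\in\Gamma_{c}(G;\B)$, put $\zeta_{n}=(\xi^{*}\xi)^{n}\in\Gamma_{c}(G;\B)$ and $b_{x}=\pi_{x}(\xi^{*}\xi)\ge 0$ in $C^{*}(G_{x},\B)=C^{*}_{\red}(G_{x},\B)$, and let $E_{x}$ be the canonical (faithful) conditional expectation onto $B(x)$. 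Then $E_{x}(b_{x}^{n})=(\pi_{x}\zeta_{n})(e_{x})=(\zeta_{n}|_{\go})(x)$, and a spectral argument---using that a faithful state on the separable $C^{*}$-algebra $B(x)$ composed with $E_{x}$ restricts to a measure of full support on $\operatorname{sp}(b_{x})$, together with $\|E_{x}(b_{x}^{n})\|\le\|b_{x}\|^{n}$---yields
\[
\|\pi_{x}(\xi)\|^{2}=\|b_{x}\|=\sup_{n}\|E_{x}(b_{x}^{n})\|^{1/n}=\sup_{n}\bigl\|(\zeta_{n}|_{\go})(x)\bigr\|^{1/n}.
\]
Each $\zeta_{n}|_{\go}$ lies in $\Gamma_{c}(\go;\B)\subseteq C^{*}(\go,\B)$, so the hypothesis makes $x\mapsto\|(\zeta_{n}|_{\go})(x)\|$ continuous; hence the displayed supremum, and therefore $x\mapsto\|\pi_{x}(\xi)\|$, is lower semicontinuous, and so continuous. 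Consequently $\mathcal{C}\to\go$ is a continuous $C^{*}$-bundle with $\mathcal{C}_{x}\cong C^{*}(G_{x},\B)$ (cf.\ \cite[Theorem~II.13.18]{fd:representations1}), and $C^{*}(G,\B)\cong\Gamma_{0}(\go;\mathcal{C})$ exhibits it as the section algebra of this continuous field.
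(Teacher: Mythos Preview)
Your proof is correct but diverges from the paper's at the decisive lower-semicontinuity step. The paper handles the $C_{0}(\go)$-algebra structure and fibre identification in one line by citing \cite[Theorem~C.27]{wil:crossed} (where you are more scrupulous, invoking disintegration to pin down $\mathcal{C}_{x}\cong C^{*}(G_{x},\B)$), and then proves lower semicontinuity via the regular representation: it realizes $C^{*}_{\red}(G,\B)$ faithfully on the Hilbert $C^{*}(\go,\B)$-module $L^{2}(\B)$ obtained from $\Gamma_{c}(G;\B)$ with inner product $\langle\xi,\eta\rangle=(\xi^{*}\eta)|_{\go}$, picks for given $x$ a unit vector $h\in L^{2}(\B)$ with $\|\lambda_{x}(\xi)h_{x}\|$ close to $\|\xi_{x}\|$, and then uses the continuity hypothesis (through the $C^{*}(\go,\B)$-valued inner product) to propagate this lower bound to nearby $y$, since $\|h_{y}\|\le 1$ forces $\|\xi_{y}\|\ge\|\lambda_{y}(\xi)h_{y}\|$. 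Your route instead writes $\|\pi_{x}(\xi)\|^{2}=\sup_{n}\|((\xi^{*}\xi)^{n}|_{\go})(x)\|^{1/n}$ via the faithful conditional expectation onto $B(x)$ and a spectral-radius computation, exhibiting the fibre norm as a pointwise supremum of continuous functions. Both arguments feed the hypothesis through restriction to $\go$; the paper's Hilbert-module $\varepsilon$-argument is shorter and avoids the auxiliary faithful state on $B(x)$, while your conditional-expectation formula is more explicit and makes transparent exactly which feature of the group-bundle situation (the expectation onto the unit fibre) drives continuity.
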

\begin{proof}
The central inclusion of $C_0(\go)$ in $\mathcal{M} C^*(G,\B)$ is nondegenerate, so
\cite[Theorem~C.27]{wil:crossed} shows that $C^*(G,\B)$ is the section algebra of an
upper-semicontinuous field with fibres $C^*(G_x, \B)$. For lower semicontinuity, let
$\lambda$ be the faithful representation, induced by multiplication, of $C^*_r(G;\B)$ on
the Hilbert-$C^*(\go,\B)$-module completion $L^2(\B)$ of $\Gamma_c(G;\B)$ for $\langle
\xi,\eta\rangle := (\xi^*\eta)|_{\go}$ (see \cite[Proposition~3.2]{kum:pams98}). By
\cite[3.3~and~3.4]{kum:pams98}, $L^2(\B)$ is a bundle over $\go$ of Hilbert modules
$V_x$, and $\lambda$ determines representations $\lambda_x : C^*_r(G,\B) \to
\mathcal{L}(V_x)$. Since $G$ is amenable, \cite[Theorem~1]{simwil:ijm13} gives $C^*(G,\B)
= C^*_r(G,\B)$, so each $\lambda_x$ determines a faithful representation of $C^*(G,\B)_x$.
Fix $\xi \in C_c(G;\B)$, $x \in \go$, and $\varepsilon > 0$. Take $h \in L^2(\B)$ with
$\|h_x\| = \|h\| = 1$ and $\|\lambda_x(\xi)h_x\| \ge \|\xi_x\| - \varepsilon/2$. As $y
\mapsto \|(\lambda(\xi)h)_y\|$ is continuous, $\|(\lambda(\xi)h)_y)\big\| \ge
\|(\lambda(\xi)h)_x)\| - \varepsilon/2 \ge \|\xi_x\| - \varepsilon$ on some neighbourhood
$U$ of $x$. Each $\|h_y\| \le 1$, so $\|\xi_y\| \ge \|\xi_x\| - \varepsilon$ for $y \in
U$.
\end{proof}

\begin{proof}[Proof of Corollary~\ref{cor:push-forward}]
Identify $C^*(\Iso^{\circ}(G), i^*(\sigma))$ with the $C^*$-algebra of a Fell bundle $\A$
over $\Iso^{\circ}(G)$ with 1-dimensional fibres. Then $x \mapsto \|\xi_x\| = |\xi(x)|$
is continuous for $\xi \in C^*(\go, \A) = C_0(\go)$. So Lemma~\ref{lem:continuous bundle}
shows that $C^*(\Iso^{\circ}(G), i^*(\sigma))$ is the section algebra of a continuous
field of $C^*$-algebras over $\go$ with fibres $C^*(\Iso^{\circ}(G)_x, i^*(\sigma))$.
Arguing as in \cite[Proposition~4.2]{KuPaSi_Sim2014}, we obtain a continuous Fell bundle
$p: \B \to G/\Iso^{\circ}(G)$ such that
\[
  C^*(G/\Iso^{\circ}(G), \B) \cong C^*(G, \sigma) \quad\text{and}\quad
  C^*(\go; \B) \cong C^*(\Iso^{\circ}(G), i^*(\sigma)).
\]
The final assertion follows from Corollary~\ref{cor:lat}.
\end{proof}

\section{Applications to \texorpdfstring{$k$}{k}-graphs}

In this section we discuss some applications of our results to
computing the primitive-ideal spaces of twisted $k$-graph
$C^*$-algebras.

Recall (see \cite[Definition 2.1]{carkanshosim:jfa14}, and also
\cite{kumpas:nyjm00}) that if $P$ is a submonoid of an abelian group
$A$ with identity 0, then a $P$-graph is a countable small category
$\Lambda$ with a functor $d : \Lambda \to P$, called the \emph{degree
  map}, such that whenever $d(\lambda) = p+q$, there are unique
$\mu \in d^{-1}(p)$ and $\nu \in d^{-1}(q)$ such that
$\lambda = \mu\nu$ (this is called the factorisation property). We
write $\Lambda^p = d^{-1}(p)$. The factorisation property ensures that
$\Lambda^0$ is the set of identity morphisms, so we identify it with
the object set, and think of the codomain and domain maps as maps
$r, s: \Lambda \to \Lambda^0$. When $P = \N^k$, a $P$-graph $\Lambda$
is precisely a $k$-graph as introduced in \cite{kumpas:nyjm00}. As a
notational convention, given $v,w \in \Lambda^0$, we write
$v\Lambda = \{\lambda : r(\lambda) = v\}$,
$\Lambda v = \{\lambda : s(\lambda) = v\}$,
$v\Lambda w = \{\lambda : r(\lambda) = v, s(\lambda) = w\}$, and so
forth.

We say that $\Lambda$ is \emph{row-finite} if each
$|v\Lambda^p|<\infty$ is finite, and that $\Lambda$ has \emph{no
  sources} if each $v\Lambda^p \not= \emptyset$. We impose both
hypotheses throughout this section.  We recall some facts about
$P$-graphs and their groupoids from \cite[\S2]{carkanshosim:jfa14} and
\cite[\S6]{renwil:tams16}. Throughout, $P$ is a submonoid of an abelian
group $A$ as above. For more details and background, see
\cite{kumpas:nyjm00, carkanshosim:jfa14, renwil:tams16}.

Let $\le$ denote the partial order on $P$ given by $p \le q$ if and
only if $q - p \in P$. As in \cite[Example~2.2]{carkanshosim:jfa14},
there is a $P$-graph
$\Omega = \Omega_P := \{(p,q) \in P \times P : p \le q\}$ with degree
map $d(p,q) = q-p$ and range, source and composition given by
$r(p, q) = (p,p)$, $s(p,q) = (q,q)$ and $(p,q)(q,r) = (p,r)$. We have
$\Omega_P^0 = \{(p,p) : p \in P\}$ and we identify it with $P$ in the
obvious way.  If $\Lambda$ is a $P$-graph, we write $\Lambda^\Omega$
for the collection of all functors $x : \Omega_P \to \Lambda$ that
intertwine the degree maps. If $P = \N^k$, then $\Lambda^\Omega$ is
precisely the infinite-path space $\Lambda^\infty$ of
\cite[Definitions~2.1]{kumpas:nyjm00}. For $x \in \Lambda^\Omega$ we
write $x(p) := x(p, p) \in \Lambda^0$ when $p \in P$ and write
$r(x) := x(0)$.

Under the relative topology inherited from
$\prod_{(p,q) \in \Omega} \Lambda^{q-p}$, $\Lambda^\Omega$ is a
locally compact Hausdorff space with basic open sets
$Z(\lambda) = \{x \in \Lambda^\Omega : x(0, d(\lambda)) = \lambda\}$
indexed by $\lambda \in \Lambda$
\cite[page~3]{carkanshosim:jfa14}. For $x \in \Lambda^\Omega$, the
vertex $r(x) := x(0) \in \Lambda^0$ is the unique vertex such that
$x \in Z(r(x))$. More generally, $x \in Z(x(0, p))$ for every
$p \in P$. An argument like that of
\cite[Proposition~2.3]{kumpas:nyjm00} shows that if
$\lambda \in \Lambda$ and $x \in Z(s(\lambda))$, then there is a
unique element $\lambda x$ of $\Lambda^\Omega$ such that
$\lambda x \in Z(\lambda)$ and $\sigma^{d(\lambda)}(\lambda x) =
x$.
Hence, as in \cite[Remarks~2.5]{kumpas:nyjm00}, there is an action of
$P$ by local homeomorphisms on $\Lambda^\Omega$ given by
$\sigma^p(x)(q,r) = x(q+p, r+p)$. The $P$-graph $\Lambda$ is
\emph{aperiodic} if, for each $v \in \Lambda^0$ there exists
$x \in Z(v)$ such that $\sigma^p(x) \not= \sigma^q(x)$ for all
distinct $p,q \in P$. It is not hard to check that $\Lambda$ is
aperiodic if and only if the set of $x$ such that
$\sigma^p(x) \not= \sigma^q(x)$ for all distinct $p,q \in P$ is dense
in $\Lambda^\Omega$.

As in \cite[Lemma~3.1]{simwil:xx15} (see also \cite{dea,
  exeren:etds07, renwil:tams16}), the Deaconu--Renault groupoid
$G_\Lambda$ associated to the action $\sigma$ is the set
$\{(x, p-q, y) \in \Lambda^\Omega \times A \times \Lambda^\Omega :
\sigma^p(x) = \sigma^q(y)\}$,
given the topology generated by the sets
$Z(\mu,\nu) = \{(\mu x, d(\mu) - d(\nu), \nu x) : x \in Z(s(\mu))\}$
indexed by pairs $\mu,\nu \in \Lambda$ with $s(\mu) = s(\nu)$. The
unit space is $\{(x, 0, x) : x \in \Lambda^\Omega\}$, which we
identify with $\Lambda^\Omega$ (the topologies agree), and the
structure maps are $r(x,g,y) = x$, $s(x,g,y) = y$,
$(x,g,y)^{-1} = (y, -g, x)$ and $(x, g, y)(y,h,z) = (x, g+h,z)$. This
groupoid is second countable, \'etale and amenable
\cite[Proposition~5.12 and Theorem~5.13]{renwil:tams16}, and the basic
open sets described above are compact open bisections
\cite[page~4]{carkanshosim:jfa14}. For more on Deaconu--Renault
groupoids of the sort we study here, see \cite{dea, exeren:etds07,
  renwil:tams16}.

We denote the $G_\Lambda$-orbit $\{\lambda\sigma^p(x) : p \in P, \lambda \in \Lambda
x(p)\}$ of $x \in \Lambda^\omega$ by $[x]$.

We shall restrict attention to abelian monoids $P$ which arise as the
image of $\N^k$ in $\Z^k/H$ for some subgroup $H$ of $\Z^k$.  We begin
by presenting a characterisation of the $P$-graphs $\Lambda$ such that
$G_\Lambda$ is essentially principal. Before doing that, we need to
introduce an order relation on $\Lambda^\Omega$.

\begin{definition}
  Let $H$ be a subgroup of $\Z^k$ and let $P$ be the image of $\N^k$
  in $\Z^k/H$. Let $\Lambda$ be a row-finite $P$-graph with no
  sources. Given $x,y \in \Lambda^\Omega$, we write $x \preceq y$ if
  for every $m \in P$ there exists $n \in P$ such that
  $x(m) \Lambda y(n) \not= \emptyset$. We write $x \sim y$ if
  $x \preceq y$ and $y \preceq x$.
\end{definition}

\begin{lemma}\label{lem:orbit containment}
  Let $H$ be a subgroup of $\Z^k$ and let $P$ be the image of $\N^k$
  in $\Z^k/H$. Let $\Lambda$ be a row-finite $k$-graph with no
  sources. For $x,y \in \Lambda^\Omega$, we have $x \preceq y$ if and
  only if $\overline{[x]} \subseteq \overline{[y]}$. In particular,
  $x \sim y$ if and only if $\overline{[x]} = \overline{[y]}$.
\end{lemma}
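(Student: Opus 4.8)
The plan is to establish the sharper equivalence that $x \preceq y$ if and only if $x \in \overline{[y]}$, and then to deduce the two stated equivalences from it. Granting the sharper statement, recall that $G_\Lambda$ is étale, so its range and source maps are open; hence the usual argument with open bisections shows that the closure of any $G_\Lambda$-invariant subset of $\Lambda^\Omega$ is again invariant, and in particular $\overline{[y]}$ is closed and invariant. Thus $x \in \overline{[y]}$ forces $[x] \subseteq \overline{[y]}$ and therefore $\overline{[x]} \subseteq \overline{[y]}$, while conversely $\overline{[x]} \subseteq \overline{[y]}$ trivially gives $x \in \overline{[y]}$; combining this with the sharper equivalence yields $x \preceq y \iff \overline{[x]} \subseteq \overline{[y]}$, and applying this twice gives the ``in particular'' clause. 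Throughout I will use that $\{Z(x(0,m)) : m \in P\}$ is a neighbourhood basis at $x$ in $\Lambda^\Omega$, the factorisation property, the description $[y] = \{\lambda\sigma^p(y) : p \in P,\ \lambda \in \Lambda y(p)\}$, and that $P$, being the image of the directed monoid $\N^k$, is directed.

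For the implication $x \preceq y \Rightarrow x \in \overline{[y]}$, I would fix $m \in P$ and produce a point of $[y]$ lying in $Z(x(0,m))$. By hypothesis there are $n \in P$ and $\mu \in x(m)\Lambda y(n)$; then $\lambda := x(0,m)\mu$ is a well-defined path with $s(\lambda) = y(n)$ and degree $m + d(\mu)$, so $z := \lambda\,\sigma^n(y)$ is an element of $\Lambda^\Omega$. By construction $z = \lambda\sigma^n(y)$ with $\lambda \in \Lambda y(n)$, whence $z \in [y]$; and $z \in Z(\lambda)$, so the factorisation property gives $z(0,m) = \bigl(x(0,m)\mu\bigr)(0,m) = x(0,m)$, that is $z \in Z(x(0,m))$. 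As $m$ was arbitrary, $x \in \overline{[y]}$.

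For the converse, fix $m \in P$; since $Z(x(0,m))$ is a neighbourhood of $x$ there is some $z \in Z(x(0,m)) \cap [y]$. Then $z(m) = s(x(0,m)) = x(m)$, and writing $z = \lambda\sigma^p(y)$ with $\lambda \in \Lambda y(p)$ and $\ell := d(\lambda)$ we have $\sigma^\ell(z) = \sigma^p(y)$, hence $z(q) = y(q-\ell+p)$ for every $q \in P$ with $q \ge \ell$. Choosing $q \ge m$ and $q \ge \ell$ (possible by directedness of $P$), the morphism $z(m,q)$ lies in $x(m)\Lambda y(q-\ell+p)$, so this set is nonempty; as $m$ was arbitrary, $x \preceq y$. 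The only places any care is needed are the concatenation-and-factorisation bookkeeping in the construction of $z$, and the appeal to openness of $r$ and $s$ (equivalently, étaleness of $G_\Lambda$, cf.\ \cite[Proposition~5.12 and Theorem~5.13]{renwil:tams16}) used to know that $\overline{[y]}$ is invariant; the rest is routine.
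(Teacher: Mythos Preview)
Your proof is correct and follows essentially the same route as the paper's: both directions reduce to the sharper fact that $x \preceq y \iff x \in \overline{[y]}$, and in each direction the constructions are the same up to notation. The only cosmetic difference is that in the converse direction the paper writes membership in $[y]$ as $\sigma^p(z) = \sigma^q(y)$ (the groupoid description) while you use $z = \lambda\sigma^p(y)$; your choice of a common upper bound $q \ge m, \ell$ corresponds exactly to the paper's choice of $r$ with $r \ge p, m$. Your added justification that $\overline{[y]}$ is invariant (via \'etaleness) is a helpful elaboration of something the paper takes for granted.
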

\begin{proof}
  First suppose that $x \preceq y$. We must show that
  $\overline{[x]} \subseteq \overline{[y]}$. Since $\overline{[y]}$ is
  invariant and closed, it suffices to show that
  $x \in \overline{[y]}$; that is, that every neighbourhood of $x$
  intersects $[y]$.  Fix a basic open neighbourhood $Z(x(0,m))$ of
  $x$. Since $x \preceq y$, there exists $n \in P$ such that
  $x(m)\Lambda y(n) \not= \emptyset$, say
  $\lambda \in x(m)\Lambda y(n)$. Then
  $x(0,m)\lambda\sigma^n(y) \in Z(x(0,m)) \cap [y]$.

  Now suppose that $\overline{[x]} \subseteq \overline{[y]}$. Then in
  particular $x \in \overline{[y]}$, so for fixed $m \in P$, the set
  $[y]$ meets the basic open neighbourhood $Z(x(0,m))$ of $x$, say at
  $z = x(0,m)z'$. By definition, we have $\sigma^p(z) = \sigma^q(y)$
  for some $p,q \in P$. Choose $r \in P$ such that $r-p, r-m \in P$,
  and let $n = r - p + q$. Then
  $\sigma^n(y) = \sigma^{r-p}(\sigma^q(y)) = \sigma^{r-p}(\sigma^p(z))
  = \sigma^r(z)$.
  So $z'(0, r-m) \in x(m)\Lambda y(n)$. Hence $x \preceq y$.
\end{proof}

The next lemma characterises when a $P$-graph groupoid is essentially
principal in terms of the order structure just discussed. Following
the standard definition for $k$-graphs \cite{raesimyee:pems03} (see
also \cite[]{renwil:tams16}), we say that a subset
$H \subseteq \Lambda^0$ of the vertex set of a row-finite $P$-graph
$\Lambda$ with no sources is \emph{hereditary} if
$H\Lambda \subseteq \Lambda H$ and \emph{saturated} if whenever
$v\Lambda^p \subseteq \Lambda H$, we have $v \in H$. A subset $T$ of
$\Lambda^0$ is a \emph{maximal tail} if its complement is a saturated
hereditary set and $s(v\Lambda) \cap s(w\Lambda) \not= \emptyset$ for
all $v,w \in T$.  We say that $\Lambda$ is \emph{strongly aperiodic}
if for every saturated hereditary subset $H \subseteq \Lambda^0$, the
subgraph $\Lambda \setminus \Lambda H$ is aperiodic (see
\cite{kanpas:ijm14}).

\begin{lemma}\label{lem:strong aperiodicity}
  Let $H$ be a subgroup of $\Z^k$ and let $P$ be the image of $\N^k$
  in $\Z^k/H$. Let $\Lambda$ be a row-finite $P$-graph with no
  sources, and let $G_\Lambda$ be the associated groupoid. Then the
  following are equivalent.
  \begin{enumerate}
  \item\label{it:ess-prin} The groupoid $G_\Lambda$ is essentially
    principal in the sense that the points with trivial isotropy are
    dense in every closed invariant subspace of $G_\Lambda^{(0)}$.
  \item\label{it:str-ap} The $P$-graph $\Lambda$ is strongly
    aperiodic.
  \item\label{it:ap-tails} The subgraph $\Lambda T$ is aperiodic for
    every maximal tail $T$ of $\Lambda$.
  \item\label{it:equiv-ap} For every $y \in \Lambda^\Omega$, there is
    an aperiodic path $x \in \Lambda^\Omega$ such that $x \sim y$.
  \end{enumerate}
\end{lemma}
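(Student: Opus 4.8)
The plan is to prove the equivalences by establishing the cycle \eqref{it:ess-prin} $\Leftrightarrow$ \eqref{it:equiv-ap} and \eqref{it:str-ap} $\Leftrightarrow$ \eqref{it:ap-tails} $\Leftrightarrow$ \eqref{it:equiv-ap}, leaning heavily on Lemma~\ref{lem:orbit containment} to translate between the combinatorial condition $x \sim y$ and the topological condition $\overline{[x]} = \overline{[y]}$. The key observation linking everything is that closed invariant subsets of $G_\Lambda^{(0)} = \Lambda^\Omega$ are exactly the sets of the form $\overline{\bigcup_{y \in S}[y]}$, and more usefully that the minimal closed invariant set containing a given $y$ is $\overline{[y]}$; moreover by Lemma~\ref{lem:orbit containment} the assignment $y \mapsto \overline{[y]}$ descends to an order isomorphism between $\sim$-classes (ordered by $\preceq$) and a sub-poset of closed invariant sets.

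\textbf{First, \eqref{it:ess-prin} $\Leftrightarrow$ \eqref{it:equiv-ap}.} For the Deaconu--Renault groupoid $G_\Lambda$, the point $y \in \Lambda^\Omega$ has trivial isotropy precisely when $\sigma^p(y) \neq \sigma^q(y)$ for all distinct $p,q \in P$ (this is immediate from the description of $G_\Lambda$ and the fact that $y$ determines $\lambda y$ uniquely), i.e.\ exactly when $y$ is an aperiodic path. So \eqref{it:ess-prin} says: in every closed invariant set, the aperiodic paths are dense. Given \eqref{it:equiv-ap}, fix a closed invariant set $C$ and a basic open $U = Z(\mu)$ meeting $C$, say $y \in C \cap U$; by \eqref{it:equiv-ap} pick aperiodic $x \sim y$, so $\overline{[x]} = \overline{[y]} \subseteq C$ by Lemma~\ref{lem:orbit containment}, hence $[x] \subseteq C$; since $[x]$ is dense in $\overline{[y]} \ni y$ and $y \in U$ with $U$ open, $[x] \cap U \neq \emptyset$, and every element of $[x]$ is aperiodic because aperiodicity is a $G_\Lambda$-invariant property of a path. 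Conversely, given \eqref{it:ess-prin}, fix $y$ and apply density of aperiodic paths in the closed invariant set $\overline{[y]}$: there is an aperiodic $x \in \overline{[y]}$, whence $\overline{[x]} \subseteq \overline{[y]}$; but also $y \in \overline{[x]}$ would be needed for equality, which need not hold directly — so instead one should take $x \in [y]$'s closure and then observe $\overline{[x]}$ is the unique minimal closed invariant set in $\overline{[y]}$ containing $x$, and choose $y$ inside it. The cleaner route: density of aperiodic points in $\overline{[y]}$ together with the fact (from Lemma~\ref{lem:orbit containment}) that $\overline{[x]} \subsetneq \overline{[y]}$ for every $x \not\sim y$ with $x \preceq y$ means some aperiodic $x$ must satisfy $\overline{[x]} = \overline{[y]}$, i.e.\ $x \sim y$; if no such $x$ existed, all aperiodic points of $\overline{[y]}$ would lie in the proper closed invariant subset $\bigcup\{\overline{[x]} : x \prec y, x \text{ aperiodic}\}$, contradicting density provided this union is not all of $\overline{[y]}$ — which holds because $y$ itself is not in it.

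\textbf{Then the combinatorial equivalences.} For \eqref{it:str-ap} $\Rightarrow$ \eqref{it:ap-tails}: a maximal tail $T$ has $\Lambda^0 \setminus T$ saturated hereditary, and $\Lambda T = \Lambda \setminus \Lambda(\Lambda^0\setminus T)$, so aperiodicity of $\Lambda T$ is a special case of \eqref{it:str-ap}. For \eqref{it:ap-tails} $\Rightarrow$ \eqref{it:equiv-ap}: given $y \in \Lambda^\Omega$, the set $T_y := \{v \in \Lambda^0 : v\Lambda y(n) \neq \emptyset \text{ for some } n\}$ (the vertices that ``can reach'' the orbit of $y$) is a maximal tail — hereditarity and saturation of the complement, and the common-source condition on $T_y$, both follow from the row-finite-no-sources structure and the factorisation property — and $y$ is an infinite path in $\Lambda T_y$; applying aperiodicity of $\Lambda T_y$ at the vertex $r(y)$ (or using density of aperiodic paths in $(\Lambda T_y)^\Omega$ together with the fact that $\overline{[y]}$ computed inside $\Lambda T_y$ agrees with $\overline{[y]}$ in $\Lambda^\Omega$) yields an aperiodic $x$ with $x \sim y$. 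Finally \eqref{it:equiv-ap} $\Rightarrow$ \eqref{it:str-ap}: given a saturated hereditary $H$ and a vertex $v \notin H$, I need an aperiodic path in $(\Lambda\setminus\Lambda H)^\Omega$ through $v$; pick any $y \in Z(v) \subseteq (\Lambda\setminus\Lambda H)^\Omega$ (staying outside $\Lambda H$ uses saturation), apply \eqref{it:equiv-ap} to get aperiodic $x \sim y$, and check that $x \sim y$ forces $x$ to also avoid $\Lambda H$ (since $x \preceq y$ and $y$'s vertices are all in the complement of $H$, and hereditarity propagates the complement forward along $x$).

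\textbf{Main obstacle.} The delicate point is the backward direction \eqref{it:ess-prin} $\Rightarrow$ \eqref{it:equiv-ap}: density of aperiodic points in $\overline{[y]}$ gives an aperiodic point \emph{somewhere} in that closed invariant set, but I must upgrade ``somewhere in $\overline{[y]}$'' to ``$\sim$-equivalent to $y$,'' i.e.\ I must rule out the aperiodic points all clustering in a proper closed invariant subset. The argument sketched above — that the union of $\overline{[x]}$ over aperiodic $x \prec y$ is a proper subset because it misses $y$, combined with its being closed and invariant (requiring a small check, possibly via a countability/Baire argument since there could be infinitely many such orbit closures) contradicting density — is the crux and will need the most care; in particular, showing that union is closed may instead need to be replaced by the observation that its complement in $\overline{[y]}$ is a nonempty open invariant set (it contains $y$), hence meets the dense set of aperiodic points, producing the desired $x$ directly. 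I also expect the verification that $T_y$ is a maximal tail and that orbit closures are computed compatibly between $\Lambda$ and its restriction $\Lambda T$ to require invoking the standard no-sources/row-finite machinery carefully, but these are routine.
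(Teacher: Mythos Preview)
Your overall scheme is reasonable, and the easy directions---\eqref{it:equiv-ap}$\Rightarrow$\eqref{it:ess-prin}, \eqref{it:str-ap}$\Rightarrow$\eqref{it:ap-tails}, and \eqref{it:equiv-ap}$\Rightarrow$\eqref{it:str-ap}---are handled correctly and agree with the paper. The genuine gap is in the step that produces an aperiodic $x$ with $x \sim y$, which you need both for \eqref{it:ess-prin}$\Rightarrow$\eqref{it:equiv-ap} and for \eqref{it:ap-tails}$\Rightarrow$\eqref{it:equiv-ap}.

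In both places you rely on a density statement: aperiodic paths are dense in $\overline{[y]}=(\Lambda T_y)^\Omega$. But density only gives you aperiodic points \emph{somewhere} in $\overline{[y]}$; it does not give one with $\overline{[x]}=\overline{[y]}$. Your proposed rescue---that the complement in $\overline{[y]}$ of $\bigcup\{\overline{[x]}: x\prec y,\ x\text{ aperiodic}\}$ is open---fails because that union of orbit closures is typically not closed (orbit closures can nest arbitrarily, and a limit of points from smaller orbit closures can land anywhere in $\overline{[y]}$, including on $y$ itself). A Baire-type argument does not repair this either, since the union need not be meager. The same problem infects your sketch of \eqref{it:ap-tails}$\Rightarrow$\eqref{it:equiv-ap}: aperiodicity of $\Lambda T_y$ at $r(y)$ gives you an aperiodic path starting at $r(y)$, but there is no reason that path satisfies $x\sim y$; it could drift off into a proper sub-orbit-closure.

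The paper closes this gap by proving the cycle \eqref{it:ess-prin}$\Rightarrow$\eqref{it:str-ap}$\Rightarrow$\eqref{it:ap-tails}$\Rightarrow$\eqref{it:equiv-ap}$\Rightarrow$\eqref{it:ess-prin} and putting all the work into \eqref{it:ap-tails}$\Rightarrow$\eqref{it:equiv-ap} via an explicit inductive construction. One enumerates the off-diagonal pairs $(m_i,n_i)$ in $P\times P$ and builds a sequence of finite paths $\mu_i\in r(y)\Lambda y(q_i)$ with $\mu_{i+1}$ extending $\mu_i$: at each stage one uses aperiodicity of $\Lambda T_y$ to find a segment witnessing $\sigma^{m_i}\neq\sigma^{n_i}$, then extends that segment to land back on a vertex $y(q_{i+1})$ of $y$ (possible because $s(\mu_i)\in T_y$). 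The resulting limit $x$ is aperiodic by construction, satisfies $x\preceq y$ because each $\mu_i$ ends at a vertex of $y$, and satisfies $y\preceq x$ because the $q_i$ are chosen cofinally. This interleaving is the missing idea; a pure density argument cannot substitute for it.
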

\begin{proof}
  (\ref{it:ess-prin})${\implies}$(\ref{it:str-ap}) Suppose that
  $G_\Lambda$ is essentially principal, and fix a saturated hereditary
  $H \subseteq \Lambda^0$. Then
  $(\Lambda \setminus \Lambda H)^\Omega \subseteq \Lambda^\Omega$ is a
  closed invariant set, and hence
  $G_\Lambda|_{(\Lambda \setminus \Lambda H)^\Omega}$ is topologically
  free. The argument of \cite[Proposition~4.5]{kumpas:nyjm00} then
  shows that $\Lambda \setminus \Lambda H$ is aperiodic.

  (\ref{it:str-ap})${\implies}$(\ref{it:ap-tails}) If
  $\Lambda \setminus \Lambda H$ is aperiodic for every saturated
  hereditary $H$, then in particular, every $\Lambda T$ is aperiodic
  because the complement of a maximal tail is saturated and
  hereditary.

  (\ref{it:ap-tails})${\implies}$(\ref{it:equiv-ap}) Suppose that
  $\Lambda T$ is aperiodic for every maximal tail $T$. For
  $y \in \Lambda^\Omega$, the set
  $T_y := \{z(n) : n \in P, z \in [y]\}$ is a maximal tail, and we
  have $\overline{[y]} = (\Lambda T_y)^\Omega$. List
  $P \times P \setminus \{(n,n) : n \in P\}$ as
  $(m_i, n_i)^\infty_{i=1}$. Let $\mathbf{1} \in P$ denote the image
  of $(1,\dots, 1) \in \N^k$ under the quotient map from $\Z^k$ to
  $\Z^k/H$. We claim that there is a sequence
  $(\mu_i, q_i)^\infty_{i=0} \in r(y)\Lambda T \times P$ with the
  following properties:
  \begin{itemize}
  \item $d(\mu_i) \ge i\cdot\mathbf{1}$ for all $i \ge 0$;
  \item $\mu_{i+1} \in \mu_i \Lambda y(q_{i+1})$ for all $i \ge 0$;
    and
  \item for each $i \ge 1$ and each $1 \le j \le i$ there exists $l$
    such that $\mu_i(m_j, m_j+l) \not= \mu_i(n_j, n_j + l)$.
  \end{itemize}
  Set $\mu_0 = r(y)$ and $q_0 = 0$; this trivially has the desired
  properties. We construct the $\mu_i$ inductively. Given $\mu_i$ and
  $q_i$, note that $\mu_i \sigma^{q_i}(y) \in (\Lambda T_y)^\Omega$.
  Since $\Lambda T_y$ is aperiodic there is an aperiodic infinite path
  $x_{i+1}$ in $Z(\mu_i) \cap (\Lambda T)^\Omega$. Since $x_{i+1}$ is
  aperiodic, we can choose $l \in P$ such that
  $x_{i+1}(m_{i+1}, m_{i+1} + l) \not= x_{i+1}(n_{i+1}, n_{i+1} +
  l)$. Choose $p \in P$ such that
  \[
  p \ge d(\mu_i) + \mathbf{1},\quad p \ge (m_{i+1} + l)\quad\text{ and
  }\quad p \ge n_{i+1} + l.
  \]
  Since $x_{i+1} \in (\Lambda T)^\Omega$, there exists
  $q_{i+1} \ge q_i + \mathbf{1}$ such that
  $x_{i+1}(p) \Lambda y(q_{i+1}) \not= \emptyset$. Now this choice of
  $q_{i+1}$ and any choice of
  $\mu_{i+1} \in x_{i+1}(0,p)\Lambda y(q_{i+1})$ satisfies the three
  bullet points, completing the proof of the claim.

  Let $x \in \Lambda^\Omega$ be the unique element such that
  $x(0, d(\mu_i)) = \mu_i$ for all $i$. By construction of the $\mu_i$
  we have $\sigma^{m_i}(x) \not= \sigma^{n_i}(x)$ for all $i$, and so
  $x$ is aperiodic. For each $m \in P$ we can choose $i$ such that
  $d(\mu_i) \ge m$ and $q_i \ge m$. The first condition forces
  $x(m) \Lambda y(q_i) \not= \emptyset$, so that $x \preceq y$; and
  the second condition forces
  $y(m)\Lambda x(d(\mu_{i+1})) \not=\emptyset$, so that $y \preceq
  x$. Hence $y \sim x$ as required.

  (\ref{it:equiv-ap})${\implies}$(\ref{it:ess-prin}) Fix a closed
  invariant $X \subseteq G_\Lambda^{(0)}$ and $y \in X$.
  By~(\ref{it:equiv-ap}), there is an aperiodic infinite path $x$ such
  that $x \sim y$. Lemma~\ref{lem:orbit containment} gives
  $\overline{[x]} = \overline{[y]} \subseteq X$, so there is a
  sequence $(y_n)$ in $[x]$ converging to $y$. Each $y_n$ is a point with trivial
  isotropy because $x$ is aperiodic. So $G_\Lambda$ is essentially principal.
\end{proof}

For the definition of the $C^*$-algebras of the $P$-graphs considered here, see
\cite[Section~2]{carkanshosim:jfa14}; for $k$-graphs, the definition appeared first in
\cite{kumpas:nyjm00}. For our purposes, it suffices to recall first that $C^*(\Lambda)$
is isomorphic to $C^*(G_\Lambda)$ \cite[Proposition~2.7]{carkanshosim:jfa14}, and second
that this isomorphism intertwines the \emph{gauge action} of $(\Z^k/H)\widehat{\;} \cong
H^\perp \subseteq \T^k$ on $C^*(\Lambda)$ with the action of $(\Z^k/H)\widehat{\;}\,$ on
$C^*(G_\Lambda)$ determined by $(\chi \cdot f)(x, g, y) = \chi(g) f(x,g,y)$ for $f \in
C_c(G_\Lambda)$, $\chi \in (\Z^k/H)\widehat{\;}$, and $(x,g,y) \in G_\Lambda$. An ideal
of $C^*(\Lambda)$ is \emph{gauge-invariant} if it is invariant for this gauge action.

In the situation where $H$ is trivial in the preceding lemma so that
the statement is about $k$-graphs, we could use this result combined
with Corollary~\ref{cor:lat} to describe the primitive-ideal spaces of
the $C^*$-algebras of strongly-aperiodic $k$-graphs. However, this
result already follows from Renault's results about groupoid dynamical
systems:

\begin{corollary}
  Let $H$ be a subgroup of $\Z^k$ and let $P$ be the image of $\N^k$
  in $\Z^k/H$. Let $\Lambda$ be a row-finite $P$-graph with no
  sources, and let $G_\Lambda$ be the associated groupoid. Then the
  following are equivalent.
  \begin{enumerate}
  \item The $P$-graph $\Lambda$ is strongly aperiodic.
  \item The map
    $I \mapsto (I \cap C_0(G_\Lambda^{(0)}))\widehat{\;}\;$ is a
    lattice isomorphism between the lattice of ideals of
    $C^*(G_\Lambda)$ and the lattice of open invariant subsets of
    $G_\Lambda^{(0)}$.
  \item Every ideal of $C^*(\Lambda)$ is gauge-invariant.
  \end{enumerate}
\end{corollary}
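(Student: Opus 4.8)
The plan is to pass to the groupoid picture and prove the cycle $(1)\implies(2)\implies(3)\implies(1)$. Throughout I use the isomorphism $C^*(\Lambda)\cong C^*(G_\Lambda)$ that intertwines the gauge action with the action of $(\Z^k/H)\widehat{\;}$ on $C^*(G_\Lambda)$ given by $(\chi\cdot f)(x,g,y)=\chi(g)f(x,g,y)$, and I recall that $G_\Lambda$ is second-countable, Hausdorff, \'etale and amenable, so $C^*(G_\Lambda)=C^*_r(G_\Lambda)$ and $C_0(G_\Lambda^{(0)})$ embeds in it in the usual way. By Lemma~\ref{lem:strong aperiodicity}, condition~(1) is equivalent to $G_\Lambda$ being essentially principal.

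For $(1)\implies(2)$ I would apply Corollary~\ref{cor:lat} to the trivial Fell line bundle $\B=G_\Lambda\times\mathbb{C}$ over $G_\Lambda$: here $A=\Gamma_0(G_\Lambda^{(0)};\B)=C_0(G_\Lambda^{(0)})$, so $\Prim A=G_\Lambda^{(0)}$ with its ordinary topology, the action \eqref{eq:action} of $G_\Lambda$ on $\Prim A$ is the canonical action on the unit space, it is amenable because $G_\Lambda$ is, and by Lemma~\ref{lem:strong aperiodicity} it is essentially free precisely because~(1) holds. Corollary~\ref{cor:lat} then gives a lattice isomorphism between the ideals of $C^*(G_\Lambda)$ and the open invariant subsets of $G_\Lambda^{(0)}$, and one checks that this isomorphism is the canonical map $I\mapsto(I\cap C_0(G_\Lambda^{(0)}))\widehat{\;}$, as is standard for \'etale groupoids and as Renault's correspondence is phrased in \cite{ren:jot91}. (Equivalently $(1)\iff(2)$ may be quoted from Renault's ideal theory \cite{ren:jot91} for \'etale amenable essentially principal groupoids.) For $(2)\implies(3)$: the gauge action fixes $C_0(G_\Lambda^{(0)})$ pointwise, since a unit of $G_\Lambda$ is of the form $(x,0,x)$ and $\chi(0)=1$; hence for any ideal $I$ and any character $\chi$ we have $\gamma_\chi(I)\cap C_0(G_\Lambda^{(0)})=\gamma_\chi(I\cap C_0(G_\Lambda^{(0)}))=I\cap C_0(G_\Lambda^{(0)})$, so the injectivity of the map in~(2) forces $\gamma_\chi(I)=I$; transporting back along $C^*(\Lambda)\cong C^*(G_\Lambda)$ shows every ideal of $C^*(\Lambda)$ is gauge-invariant.

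The step $(3)\implies(1)$ is the one I expect to be the main obstacle, and I would prove it by contraposition. Suppose $\Lambda$ is not strongly aperiodic; by Lemma~\ref{lem:strong aperiodicity} there is a maximal tail $T$ of $\Lambda$ with $\Lambda T$ not aperiodic. Since the complement of $T$ is saturated and hereditary, $\Lambda T$ is again a row-finite $P$-graph with no sources; moreover $\Lambda T$ is cofinal, $X:=(\Lambda T)^\Omega$ is a minimal closed invariant subset of $G_\Lambda^{(0)}$, and $G_\Lambda|_X\cong G_{\Lambda T}$ (compare the proof of Lemma~\ref{lem:strong aperiodicity} and \cite{carkanshosim:jfa14}). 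As $\Lambda T$ is cofinal but not aperiodic, $C^*(\Lambda T)\cong C^*(G_{\Lambda T})$ is not simple by the simplicity criterion for $P$-graph $C^*$-algebras \cite{carkanshosim:jfa14}. On the other hand $G_{\Lambda T}$ is minimal, so $G_{\Lambda T}^{(0)}$ has no nontrivial open invariant subsets, and since the gauge-invariant ideals of $C^*(\Lambda T)$ are parametrised by the saturated hereditary subsets of $(\Lambda T)^0$ \cite{carkanshosim:jfa14}, the only gauge-invariant ideals of $C^*(G_{\Lambda T})$ are $0$ and $C^*(G_{\Lambda T})$. Hence a nontrivial ideal $J$ of $C^*(G_{\Lambda T})$ cannot be gauge-invariant. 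Finally, because $X$ is closed and invariant there is a gauge-equivariant surjection $\pi\colon C^*(G_\Lambda)\twoheadrightarrow C^*(G_{\Lambda T})$ (the cocycle restricts to $G_{\Lambda T}$), and $\pi^{-1}(J)$ is then a non-gauge-invariant ideal of $C^*(G_\Lambda)\cong C^*(\Lambda)$, since otherwise applying $\pi$ would make $J$ gauge-invariant. This contradicts~(3) and closes the cycle.

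Most of the argument is bookkeeping with the embedding $C_0(G_\Lambda^{(0)})\subseteq C^*(G_\Lambda)$ and the gauge formula. The points requiring care all sit in the last paragraph: that a maximal tail produces a minimal cofinal sub-$P$-graph with no sources and that $G_\Lambda|_X\cong G_{\Lambda T}$ (the geometric heart), together with the two off-the-shelf $C^*$-facts that a cofinal non-aperiodic $P$-graph has a non-simple $C^*$-algebra and that the gauge-invariant ideals of a cofinal $P$-graph algebra form the trivial lattice, and the existence of the gauge-equivariant quotient $C^*(G_\Lambda)\to C^*(G_{\Lambda T})$. In the $k$-graph case $H=0$ one could instead simply invoke \cite{kanpas:ijm14}.
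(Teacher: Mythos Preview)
Your (1)$\Rightarrow$(2) and (2)$\Rightarrow$(3) are essentially the paper's argument (the paper cites \cite[Corollary~4.9]{ren:jot91} directly rather than going through Corollary~\ref{cor:lat}, but this is cosmetic). The gap is in (3)$\Rightarrow$(1): the assertion that a maximal tail $T$ yields a \emph{cofinal} subgraph $\Lambda T$ is false, so $(\Lambda T)^\Omega$ need not be minimal. For a $1$-graph counterexample, take vertex set $\N\cup\{*\}$ with a loop at $*$, and for each $n\in\N$ one edge with range $n$ and source $n{+}1$ together with one edge with range $n$ and source $*$. The full vertex set is a maximal tail (every pair of vertices has $*$ as a common descendant) and the graph is not aperiodic (the only infinite path at $*$ is periodic), yet it is not cofinal, since $*\Lambda n=\emptyset$ for all $n$. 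So the dichotomy ``only trivial gauge-invariant ideals, but not simple, hence a non-gauge-invariant ideal exists'' is unavailable for the $T$ that Lemma~\ref{lem:strong aperiodicity} hands you, and your pulled-back ideal $\pi^{-1}(J)$ need not exist as described. (You also lean on a gauge-invariant-ideal classification and a simplicity criterion for general $P$-graphs that are not obviously supplied by \cite{carkanshosim:jfa14}.)

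The paper sidesteps minimality entirely. From the failure of essential principality it picks $x\in\Lambda^\Omega$ with $G_\Lambda|_{\overline{[x]}}$ not topologically principal, invokes \cite[Lemma~3.1]{broclafarsim:sf14} to obtain a clopen bisection $U=\{(y,p,y):y\in K\}$ interior to the isotropy over $\overline{[x]}$ with $p\neq 0$, chooses a character $\chi$ with $\chi(p)\neq 1$, and takes any $a\in C_c(G_\Lambda)$ restricting to $1_U-\chi(p)1_K$ on $G_\Lambda|_{\overline{[x]}}$. A direct computation with the orbit representation $\pi_x$ on $\ell^2([x])$ then gives $\pi_x(a)\delta_x=(1-\chi(p))\delta_x\neq 0$ while $\pi_x(\chi\cdot a)=0$, so $\ker\pi_x$ is a non-gauge-invariant ideal.
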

\begin{proof}
  Lemma~\ref{lem:strong aperiodicity} shows that $\Lambda$ is strongly
  aperiodic if and only if $G_\Lambda$ is essentially principal. Since
  $G_\Lambda$ is amenable \cite[Lemma~3.5]{SimWil_NYJM13},
  (1)${\implies}$(2) therefore follows from
  \cite[Corollary~4.9]{ren:jot91}.  Since $C_0(G_\Lambda^{(0)})$ is
  pointwise fixed by the gauge action on $C^*(\Lambda)$, we have
  (2)${\implies}$(3). For (3)$\implies$(1), we argue the
  contrapositive. Suppose that $\Lambda$ is not strongly aperiodic,
  and so $G_\Lambda$ is not essentially principal. So there exists
  $x \in \Lambda^\Omega$ such that $G_\Lambda|_{\overline{[x]}}$ is
  not topologically principal. By \cite[Lemma~3.1]{broclafarsim:sf14}
  there is an open bisection $U$ that is interior to the isotropy of
  $G_\Lambda|_{\overline{[x]}}$ but contains no units. By definition
  of the topology on $G_\Lambda$, we can assume that $U$ is clopen and
  has the form $U = \{(y, p, y) : y \in K\}$ for some compact
  relatively open $K \subseteq \overline{[x]}$ and some
  $p \in \Z^k/H \setminus \{0 + H\}$.  Fix a character $\chi$ of
  $\Z^k/H$ such that $\chi(p) \not= 1$. Choose any
  $a \in C_c(G_\Lambda)$ whose restriction to
  $G_\Lambda|_{\overline{[x]}}$ is equal to $1_U - \chi(p)1_K$. As in
  the proof of \cite[Proposition~5.5]{carkanshosim:jfa14}, let $\pi_x$
  be the representation of $C^*(G_\Lambda)$ on $\ell^2([x])$ given by
  $\pi_x(f)\delta_y = \sum_{g \in (G_\Lambda)_y} f(g)\delta_{r(g)}$.
  Then $\pi_x(a) \delta_x = (1 - \chi(p))\delta_x \not= 0$, and
  $\pi_x(\chi\cdot a) = \pi_x(\chi(p)1_U - \chi(p)1_K) =
  \chi(p)\pi_x(1_U - 1_K) = 0$.
  So $\ker(\pi_x)$ is not gauge-invariant.
\end{proof}

If $\Lambda$ is a $P$-graph and $q : \N^k \to P$ is a homomorphism,
then the pullback $k$-graph $q^*\Lambda$ is given by
$q^*\Lambda = \{(\lambda, m) \in \Lambda \times \N^k : d(\lambda) =
q(m)\}$
with pointwise operations and degree map $d(\lambda,m) = m$ (see
\cite[Definition~3.1]{carkanshosim:jfa14} or
\cite[Definition~1.9]{kumpas:nyjm00}).

Recall from \cite[Definition 3.5]{kumpassim:tams15} that a $\T$-valued
2-cocycle on a $k$-graph $\Lambda$ is a map
$c : \{(\mu,\nu) \in \Lambda \times \Lambda : s(\mu) = r(\nu)\} \to
\T$
such that $c(r(\lambda),\lambda) = 1 = c(\lambda, s(\lambda))$ for all
$\lambda$ and such that
\[
c(\lambda,\mu)c(\lambda\mu,\nu) = c(\mu,\nu)c(\lambda,\mu\nu)\text{
  for all composable $\lambda,\mu,\nu$.}
\]
Again, rather than present a definition of the twisted $C^*$-algebra
$C^*(\Lambda, c)$, we just recall from
\cite[Corollary~7.8]{kumpassim:tams15} that for each 2-cocycle $c$ on
$\Lambda$ there is a locally constant 2-cocycle $\sigma$ on
$G_\Lambda$ such that $C^*(\Lambda, c) \cong C^*(G_\Lambda, \sigma)$.

Our next result, which provides a method for computing the
primitive-ideal space of a twisted $C^*$-algebra associated to such a
$k$-graph obtained as a pullback of a strongly aperiodic $P$-graph,
follows easily from Corollary~\ref{cor:push-forward} and
Lemma~\ref{lem:pullback groupoid}. We provide a description of the
topology on $\Prim C^*(\Iso^{\circ}(G_\Lambda), \sigma)$ that will
help in applying the theorem in Proposition~\ref{prp:primtop} below.

Recall that if $\sigma$ is a $2$-cocycle on an abelian group $H$, then
the symmetry group or symmetrizer subgroup of $\sigma$ is
\[
S_{\sigma}=\set{t\in H: \text{$\sigma(t,s)=\sigma(s,t)$ for all
    $s\in H$}}.
\]
Note that $S_{\sigma}$ is also the kernel $Z(h_{\sigma})$ of the map
$h_{\sigma}:H\to \hat H$ given by
$h_{\sigma}(s)(t)=\sigma(s,t)\overline{\sigma(t,s)}$.  We can view
$h_{\sigma}$ as an antisymmetric bicharacter on $H$. The map
$\sigma\mapsto \sigma\sigma^{*}$, where
$\sigma^{*}(s,t)=\overline{\sigma(t,s)}$, is a isomorphism of
$H^{2}(H,\mathbb T)$ with the group $X(H,\mathbb T)$ of antisymmetric
bicharacters on $H$ (see \cite[Proposition~3.2]{olepedtak:jot80}).  It
is well-known (see \cite{bagkle:jfa73} or
\cite[Proposition~34]{gre:am78}) that the primitive-ideal space of the
twisted group $C^{*}$-algebra $C^{*}(H,\sigma)$ is homeomorphic to the
dual of $S_{\sigma}$.

\begin{theorem}\label{thm:pullback}
  Let $H$ be a subgroup of $\Z^k$ and let $P$ be the image of $\N^k$
  in $\Z^k/H$. Let $\Gamma$ be a row-finite $P$-graph with no sources,
  and let $\Lambda := q^*\Gamma$ be the pullback $k$-graph along the
  quotient map $q : \Z^k \to \Z^k/H$. Suppose that
  $\Gamma$ is strongly aperiodic, let $c$ be a $\T$-valued 2-cocycle
  on $\Lambda$ and let $\sigma$ be a locally constant $\T$-valued
  2-cocycle on $G_\Lambda$ such that
  $C^*(\Lambda, c) \cong C^*(G_\Lambda, \sigma)$. Then
  $\Iso^{\circ}(G_\Lambda) \cong \Lambda^\infty \times H$ and is
  closed in $G_\Lambda$. For $x \in \Lambda^\infty$, let
  $\sigma_x$ be the restriction of $\sigma$ to
  $\{(x, h, x) : h \in H\} \subseteq (G_\Lambda)^x_x$. Then
  $C^*(\Iso^{\circ}(G_\Lambda), \sigma)$ is the section algebra of a
  continuous field of $C^*$-algebras such that
  $C^*(\Iso^{\circ}(G_\Lambda), \sigma)_x \cong C^*(H, \sigma_x)$ with
  $\Prim C^*(H, \sigma_x) = \hat S_{\sigma_{x}}$. There is an action
  of $G_\Gamma$ on $\Prim C^*(\Iso^{\circ}(G_\Lambda), \sigma)$ such
  that
  \[
  \text{$\Prim C^*(\Lambda, c)$ is homeomorphic to $  \Prim
    C^*(\Iso^{\circ}(G_\Lambda),
  \sigma)/G_\Gamma$.}
  \]
\end{theorem}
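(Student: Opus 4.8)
The plan is to derive everything from Corollary~\ref{cor:push-forward}, applied to the Deaconu--Renault groupoid $G:=G_\Lambda$ (which is \'etale and amenable by the results cited above) together with the locally constant---hence continuous---cocycle $\sigma\in Z^2(G_\Lambda,\T)$, using the structural description of the pullback groupoid supplied by Lemma~\ref{lem:pullback groupoid}, the continuous-field statement of Lemma~\ref{lem:continuous bundle}, and the Baggett--Kleppner/Green description of $\Prim C^*(H,\sigma_x)$ recalled before the theorem.

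First I would assemble the groupoid-level input. Because $\Lambda=q^*\Gamma$, Lemma~\ref{lem:pullback groupoid} identifies $\Lambda^\infty$ with $\Gamma^\Omega$ and provides a continuous, open, surjective groupoid homomorphism $\psi\colon G_\Lambda\to G_\Gamma$ carrying $(x,p-p',y)$ to $(\bar x,\,q(p)-q(p'),\,\bar y)$, whose kernel is exactly $\{(x,h,x):x\in\Lambda^\infty,\ h\in H\}\cong\Lambda^\infty\times H$. Since $\Gamma$ is strongly aperiodic, Lemma~\ref{lem:strong aperiodicity} gives that $G_\Gamma$ is essentially principal, and for a Hausdorff \'etale groupoid this forces $\Iso^{\circ}(G_\Gamma)=G_\Gamma^{(0)}$; then openness of $\psi$ shows $\psi(\Iso^{\circ}(G_\Lambda))\subseteq\Iso^{\circ}(G_\Gamma)=G_\Gamma^{(0)}$, while $\ker\psi$ is an open subset of $\Iso(G_\Lambda)$, so $\Iso^{\circ}(G_\Lambda)=\ker\psi\cong\Lambda^\infty\times H$. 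This set is closed in $G_\Lambda$ because it is the $\psi$-preimage of the (closed) unit space $G_\Gamma^{(0)}$. In particular $G_\Lambda/\Iso^{\circ}(G_\Lambda)\cong G_\Gamma$ is essentially principal, so all hypotheses of Corollary~\ref{cor:push-forward} are met by $(G_\Lambda,\sigma)$.

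Applying Corollary~\ref{cor:push-forward} then yields a continuous Fell bundle $\B$ over $G_\Gamma$ with $C^*(G_\Gamma,\B)\cong C^*(G_\Lambda,\sigma)\cong C^*(\Lambda,c)$, with $C^*(\Lambda^\infty,\B)\cong C^*(\Iso^{\circ}(G_\Lambda),\sigma)$, with an essentially principal action of $G_\Gamma$ on $\Prim C^*(\Iso^{\circ}(G_\Lambda),\sigma)$, and with a lattice isomorphism (the ``generated by the image'' map) from the $G_\Gamma$-invariant open subsets of $\Prim C^*(\Iso^{\circ}(G_\Lambda),\sigma)$ onto the ideals of $C^*(\Lambda,c)$. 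To make the fibres explicit, note that $C^*(\Iso^{\circ}(G_\Lambda),\sigma)=C^*(\Lambda^\infty\times H,\sigma)$, where $\Lambda^\infty\times H$ is an \'etale groupoid equal to its own isotropy and is a bundle of copies of the amenable group $H$ (hence amenable), with $C^*$-algebra $C_0(\Lambda^\infty)$ over its unit space; so, exactly as in the proof of Corollary~\ref{cor:push-forward}, Lemma~\ref{lem:continuous bundle} exhibits $C^*(\Iso^{\circ}(G_\Lambda),\sigma)$ as the section algebra of a continuous field over $\Lambda^\infty$ with fibre $C^*((\Lambda^\infty\times H)_x,\sigma)=C^*(H,\sigma_x)$ over $x$. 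The Baggett--Kleppner/Green result then gives $\Prim C^*(H,\sigma_x)=\hat S_{\sigma_x}$.

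It remains to promote the ideal-lattice statement to the claimed homeomorphism. The orbit map $\pi\colon\Prim C^*(\Iso^{\circ}(G_\Lambda),\sigma)\to\Prim C^*(\Iso^{\circ}(G_\Lambda),\sigma)/G_\Gamma$ is a continuous open surjection, so $U\mapsto\pi^{-1}(U)$ is a lattice isomorphism from the open subsets of the orbit space onto the $G_\Gamma$-invariant open subsets upstairs. Composing this with the lattice isomorphism from Corollary~\ref{cor:push-forward} and with the tautological bijection between ideals of $C^*(\Lambda,c)$ and open subsets of $\Prim C^*(\Lambda,c)$ produces a lattice isomorphism $\mathcal O\big(\Prim C^*(\Iso^{\circ}(G_\Lambda),\sigma)/G_\Gamma\big)\cong\mathcal O\big(\Prim C^*(\Lambda,c)\big)$. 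Since the primitive-ideal space of a $C^*$-algebra is $T_0$ and sober, it is recovered from its lattice of open sets, so it suffices to check that the orbit space is $T_0$ and that the composite lattice map is spatial; the induced bijection on points is then the desired homeomorphism. I expect this final point---controlling the point-set topology of the orbit space and passing from a lattice isomorphism to a homeomorphism---to be the main obstacle, everything earlier being assembly of results already available. In practice one would either verify directly, using essential principality of the $G_\Gamma$-action furnished by Corollary~\ref{cor:push-forward}, that distinct orbits are separated by invariant open sets (so the quotient is $T_0$), or extract the homeomorphism straight from Renault's construction in \cite{ren:jot91}, where under the amenability and essential-freeness hypotheses the induction map $\Prim C^*(\Iso^{\circ}(G_\Lambda),\sigma)\to\Prim C^*(G_\Lambda,\sigma)$ is already a continuous open surjection whose fibres are the $G_\Gamma$-orbits.
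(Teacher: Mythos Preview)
Your proposal is correct and follows essentially the same route as the paper: identify $\Iso^{\circ}(G_\Lambda)$ as the kernel of the surjection $G_\Lambda\to G_\Gamma$ from Lemma~\ref{lem:pullback groupoid}, verify it is clopen and isomorphic to $\Lambda^\infty\times H$, and then invoke Corollary~\ref{cor:push-forward} together with Lemma~\ref{lem:continuous bundle} and the Baggett--Kleppner/Green description of $\Prim C^*(H,\sigma_x)$. The paper's own proof is terser---it simply states that everything follows from Corollary~\ref{cor:push-forward} once the isotropy computation is done---whereas you are more explicit about why $\Iso^{\circ}(G_\Lambda)=\ker\psi$ and about the continuous-field assertion.

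Your caution about the final step is well placed: Corollary~\ref{cor:push-forward} (via Corollary~\ref{cor:lat}) is stated as a lattice isomorphism, not a homeomorphism, and the paper does not spell out the passage to $\Prim C^*(\Lambda,c)\cong \Prim C^*(\Iso^{\circ}(G_\Lambda),\sigma)/G_\Gamma$. Your option~(b), appealing directly to Renault's results in \cite{ren:jot91}, is exactly what is implicitly being used: under amenability and essential freeness Renault shows that induction identifies $\Prim$ of the crossed product with the quasi-orbit space, which here is the orbit space. So you have not missed anything the paper supplies; you have simply been more scrupulous about a point the authors leave to the reader and to \cite{ren:jot91}.
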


To prove the theorem, we need the following lemma about the structure
of the groupoid of a pullback.  Let $G$ be an \'{e}tale groupoid, let
$t: A \to B$ be a homomorphism of locally compact abelian groups such
that $\ker t$ is discrete, and let $c: G \to B$ be a continuous
1-cocycle. Then the pullback
\[
t^*(G) = \{ (g, a) \in G \times A : c(g) = t(a) \}
\]
is a closed subgroupoid of $G \times A$ and is \'{e}tale in the
relative topology. The projection map $\pi_1 : t^*(G) \to G$ onto the
first coordinate is a groupoid homomorphism. If $t$ is surjective,
then $\pi_1$ is a (surjective) local homeomorphism
(see~\cite{ionkum16}).

\begin{lemma}\label{lem:pullback groupoid}
  Let $H$ be a subgroup of $\Z^k$ and let $P$ be the image of $\N^k$
  in $\Z^k/H$. Let $\Gamma$ be a row-finite $P$-graph with no sources,
  and let $\Lambda := q^*\Gamma$ be the pullback $k$-graph with
  respect to the quotient map $q : \Z^k \to \Z^k/H$. There is a
  homeomorphism $q^* : \Gamma^\Omega \to \Lambda^\infty$ such that
  $q^*(x)(m,n) = (x(q(m), q(n)), n-m)$ for all $x \in \Gamma^\Omega$
  and $m \le n \in \N^k$. For all $m \in \N^k$ and
  $x \in \Gamma^\Omega$ we have
  $\sigma^m(q^*(x)) = q^*(\sigma^{q(m)}(x))$.

  There is a groupoid isomorphism
  $\tilde{q} : q^*(G_\Gamma) \to G_\Lambda$ such that
  \begin{equation}\label{eq:pullbackiso}
    \tilde{q} (((x, p, y), m)) = (q^*(x), m, q^*(y))
  \end{equation}
  for all $(x, p, y) \in G_\Gamma$, $m \in \Z^k$ such that $q(p) = m$.
  The map $\pi_\infty = \pi_1 \circ \tilde{q}^{-1}$ defines a groupoid
  homomorphism $\pi_\infty : G_\Lambda \to G_\Gamma$ such that
  \[
  \pi_\infty(q^*(x), m, q^*(y)) = (x, q(m), y) \quad\text{ for
  }(q^*(x), m, q^*(y)) \in G_\Lambda,
  \]
  and the restriction of $\pi_\infty$ to
  $G_\Lambda^{(0)} \cong \Lambda^\infty$ is $(q^*)^{-1}$.
\end{lemma}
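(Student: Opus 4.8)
The plan is to construct the path-space bijection $q^*$ first, record its interaction with the shift maps, and then transport it to the groupoid level; essentially everything is an unwinding of definitions, with exactly two places where the factorisation property and some coset bookkeeping do real work.

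\emph{Step 1 (the map $q^*$).} First I would check that $q^*(x)(m,n)=(x(q(m),q(n)),n-m)$ really defines a degree-intertwining functor $\Omega_{\N^k}\to\Lambda=q^*\Gamma$: since $x$ intertwines degrees, $d_\Gamma(x(q(m),q(n)))=q(n)-q(m)=q(n-m)$, so $(x(q(m),q(n)),n-m)$ lies in $\Lambda$ with degree $n-m$, and functoriality of $q^*(x)$ is inherited from that of $x$ together with the pointwise composition in $q^*\Gamma$. Continuity of $q^*$ is immediate because the preimage under $q^*$ of the basic cylinder $Z((\gamma,m))\subseteq\Lambda^\infty$ is exactly $Z(\gamma)\subseteq\Gamma^\Omega$ (whenever $d(\gamma)=q(m)$). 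For the inverse I would put $\rho(z)(p,p')$ equal to the first coordinate of $z(m,n)$ for any lift $m\le n\in\N^k$ of $p\le p'$ (such lifts exist since $q\colon\N^k\to P$ is onto). \emph{Well-definedness of $\rho$ is the first real point}: writing $z(m,n)=(\gamma_{m,n},n-m)$, one reduces by functoriality ($\gamma_{0,m}\gamma_{m,n}=\gamma_{0,n}$, and $\gamma_{m,n}$ is then determined by the factorisation property) to showing $\gamma_{0,m}=\gamma_{0,m'}$ whenever $q(m)=q(m')$; with $M:=m\vee m'$ (coordinatewise maximum) one has $\gamma_{0,M}=\gamma_{0,m}\gamma_{m,M}=\gamma_{0,m'}\gamma_{m',M}$, where $d(\gamma_{0,m})=q(m)=q(m')=d(\gamma_{0,m'})$ and $d(\gamma_{m,M})=q(M)-q(m)=q(M)-q(m')=d(\gamma_{m',M})$, so uniqueness in the factorisation property of $\Gamma$ (applied to the path $\gamma_{0,M}$ of degree $q(M)=q(m)+q(M-m)$) forces $\gamma_{0,m}=\gamma_{0,m'}$. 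One then checks $\rho(z)$ is a degree-intertwining functor $\Omega_P\to\Gamma$ and that $q^*,\rho$ are mutually inverse; since $q^*$ carries the basis $\{Z(\gamma)\}$ onto the basis $\{Z((\gamma,m))\}$ it is a homeomorphism. Finally $\sigma^m(q^*(x))=q^*(\sigma^{q(m)}(x))$ is a one-line comparison of the formulas $\sigma^m(w)(a,b)=w(a+m,b+m)$ and $\sigma^p(x)(c,d)=x(c+p,d+p)$.

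\emph{Step 2 (the isomorphism $\tilde q$).} By the pullback construction recalled just before the lemma, applied to $G=G_\Gamma$, $t=q\colon\Z^k\to\Z^k/H$ (discrete kernel $H$) and the canonical cocycle $c(x,p,y)=p$, the groupoid $q^*(G_\Gamma)=\{((x,p,y),m):p=q(m)\}$ is a closed \'etale subgroupoid of $G_\Gamma\times\Z^k$ and $\pi_1$ is a surjective local homeomorphism and a groupoid homomorphism. I would define $\tilde q$ by \eqref{eq:pullbackiso} and check it takes values in $G_\Lambda$: given $((x,p,y),m)$ with $p=q(m)$ and $\sigma^a(x)=\sigma^b(y)$ for some $a,b\in P$ with $a-b=p$, I need $\tilde a,\tilde b\in\N^k$ with $\tilde a-\tilde b=m$ and $\sigma^{\tilde a}(q^*(x))=\sigma^{\tilde b}(q^*(y))$. \emph{This coset bookkeeping is the second real point}: pick any $\tilde a_0,\tilde b_0\in\N^k$ with $\tilde a_0-\tilde b_0=m$ and a lift $n_0\in\N^k$ of $a$, and set $\tilde a:=\tilde a_0+n_0$, $\tilde b:=\tilde b_0+n_0$ and $e:=q(\tilde a_0)\in P$; then $\tilde a-\tilde b=m$, and using $q(\tilde a_0)-q(\tilde b_0)=q(m)=a-b$ one gets $q(\tilde a)=a+e$ and $q(\tilde b)=b+e$. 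Hence $\sigma^{q(\tilde a)}(x)=\sigma^e(\sigma^a(x))=\sigma^e(\sigma^b(y))=\sigma^{q(\tilde b)}(y)$, and Step~1 gives $\sigma^{\tilde a}(q^*(x))=q^*(\sigma^{q(\tilde a)}(x))=q^*(\sigma^{q(\tilde b)}(y))=\sigma^{\tilde b}(q^*(y))$, so $(q^*(x),m,q^*(y))\in G_\Lambda$. That $\tilde q$ respects multiplication and inversion is immediate, and $(w,m,w')\mapsto(((q^*)^{-1}(w),q(m),(q^*)^{-1}(w')),m)$ is a two-sided inverse (well defined into $q^*(G_\Gamma)$ by the same shift identity applied to $(q^*)^{-1}$), so $\tilde q$ is an algebraic isomorphism. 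It is a homeomorphism because $q^*$ and $(q^*)^{-1}$ are: concretely, $\tilde q$ carries $(Z(\mu,\nu)\times\{m\})\cap q^*(G_\Gamma)$ bijectively onto $Z((\mu,a),(\nu,b))$ whenever $q(m)=d(\mu)-d(\nu)$ and $a,b$ are chosen with $q(a)=d(\mu)$, $q(b)=d(\nu)$, $a-b=m$.

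\emph{Step 3 ($\pi_\infty$).} Put $\pi_\infty:=\pi_1\circ\tilde q^{-1}$. As a composite of a groupoid isomorphism with $\pi_1$ it is a groupoid homomorphism and a surjective local homeomorphism, and its formula is read off directly: $\pi_\infty(q^*(x),m,q^*(y))=\pi_1(((x,q(m),y),m))=(x,q(m),y)$. Restricting to $G_\Lambda^{(0)}=\{(w,0,w)\}\cong\Lambda^\infty$ gives $\pi_\infty(q^*(x),0,q^*(x))=(x,0,x)$, i.e.\ under the identifications $\Lambda^\infty\cong G_\Lambda^{(0)}$ and $\Gamma^\Omega\cong G_\Gamma^{(0)}$ the restriction is $(q^*)^{-1}$. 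The only non-routine ingredients in the whole argument are the two points flagged above — the coordinatewise-maximum plus factorisation argument for well-definedness of $\rho$, and the shift-enlargement argument placing $\tilde q$ into $G_\Lambda$ — and I expect the first of these to be the one requiring the most care to state cleanly.
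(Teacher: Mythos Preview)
Your proposal is correct and follows essentially the same route as the paper. The two places you flag as ``real points'' are exactly the two nontrivial verifications in the paper's proof, and your arguments for them are minor rearrangements of the paper's: for the inverse of $q^*$ the paper factors $\pi(y(0,n\vee n'))$ three ways to get $\pi(y(m,n))=\pi(y(m',n'))$ directly, whereas you first reduce to initial segments and factor $\gamma_{0,m\vee m'}$; for the well-definedness of $\tilde q$ the paper lifts $i,j\in P$ to $a_0,b_0\in\N^k$ and corrects the difference via $c=m-(a_0-b_0)=c^+-c^-$, while you lift $m$ first and then translate by a lift of $a$---both land on the same shift identity $\sigma^{q(\tilde a)}(x)=\sigma^{q(\tilde b)}(y)$.
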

\begin{proof}
  The map $q^*$ is injective by construction. To see that it is
  surjective, fix $y \in \Lambda^\infty$.  We prove that there is a
  well-defined map $\tilde{\pi}(y) : \Omega_P \to \Gamma$ such that
  $\tilde{\pi}(y)(q(m), q(n)) = \pi(y(m,n))$ for all
  $(m, n) \in \Omega_k$ and $q^*(\tilde{\pi}(y)) = y$.  Fix $m \le n$
  and $m' \le n'$ in $\N^k$ such that $q(m) = q(m')$ and
  $q(n) = q(n')$.  Let $\pi : \Lambda \to \Gamma$ be the quotient map
  $\pi(\alpha, m) = \alpha$. We have $y(m,n) = (\pi(y(m,n)), n-m)$ and
  $y(m',n') = (\pi(y(m',n')), n-m)$. We claim that
  $\pi(y(m,n)) = \pi(y(m',n'))$. To see this, let $p := n\vee
  n'$. Since $\pi$ is a functor,
  \[
  \pi(y(0, m))\pi(y(m,n))\pi(y(n,p)) = \pi(y(0,p)) = \pi(y(0,
  m'))\pi(y(m',n'))\pi(y(n',p)).
  \]
  We have $d(\pi(y(0, m))) = q(m) = q(m') = d(\pi(y(0, m')))$, and the
  same calculation gives $d(\pi(y(m, n))) = d(\pi(y(m', n')))$ and
  $d(\pi(y(n, p))) = d(\pi(y(n', p)))$. So the factorisation property
  in $\Gamma$ guarantees that $\pi(y(m,n)) = \pi(y(m',n'))$. It
  follows that there is a well-defined map
  $\tilde{\pi}(y) : \Omega_P \to \Gamma$ such that
  $\tilde{\pi}(y)(q(m), q(n)) = \pi(y(m,n))$ for all
  $(m, n) \in \Omega_k$. It is routine to check that
  $q \times q : (m,n) \mapsto (q(m), q(n)$ is a surjection from
  $\Omega_k$ to $\Omega_P$, that each
  $\tilde{\pi}(y) \in \Gamma^\Omega$ and that each
  $q^*(\tilde{\pi}(y)) = y$.  So $q^*$ is surjective.

  Since $(q^*)^{-1}(Z(\lambda)) = Z(\pi(\lambda))$ for
  $\lambda \in \Lambda$, and
  $q^*(Z(\gamma)) = \bigcup_{q(m) = d(\gamma)} Z((\gamma, m))$ for
  each $\gamma \in \Gamma$, we see that $q^*$ is continuous and
  open. Moreover, for every $\gamma \in \Gamma$, $m \in \N^k$ and
  $x \in \Gamma^\Omega$ such that $d(\gamma) = q(m)$ and
  $s(\gamma) = r(x)$ we have $q^*(\gamma x) = (\gamma, m)q^*(x)$.
  Hence, for all $m \in \N^k$ and $x \in \Gamma^\Omega$ we have
  $\sigma^m(q^*(x)) = q^*(\sigma^{q(m)}(x))$.

  We must next show that equation~\eqref{eq:pullbackiso} gives a
  well-defined map $\tilde{q} : q^*(G_\Gamma) \to G_\Lambda$.  Given
  $(x, p, y) \in G_\Gamma$, take $m \in \Z^k$ such that $q(m) = p$.
  We have $p = i - j$ for some $i, j \in P$ such that
  $\sigma^{i}(x) = \sigma^{j}(y)$.  By definition of $P := q(\N^k)$,
  we can write $i = q(a_0)$ and $j = q(b_0)$ for some
  $a_0, b_0 \in \N^k$.  Now $c := m - (a_0 - b_0) \in \ker(q) = H$,
  and so we have $c = c^+ - c^-$ for some $c^+, c^- \in \N^k$.  Since
  $q(c) = 0$, we have $q(c^+) = q(c^-)$.  
  Now setting $a := a_0 + c^+$ and $b := b_0 + c^-$ we have
  $a, b \in \N^k$ and $m = a-b$, $q(a) = i + l$ and $q(b) = j + l$
  where $l = q(c^+)$.  Hence,
  \[
  \sigma^a(q^*(x)) = q^*(\sigma^{q(a)}(x)) = q^*(\sigma^{i+l}(x)) =
  q^*(\sigma^{j+l}(x)) = \sigma^b(q^*(x)),
  \]
  since $\sigma^{i}(x) = \sigma^{j}(y)$.  Therefore
  $(q^*(x), m, q^*(y)) \in G_\Lambda$ and $\tilde{q}$ is well defined.
  By construction $\tilde{q}$ is a coninuous groupoid homomorphism
  with inverse given by
  \[
  G_\Lambda \owns (z, m, w) \mapsto (((q^*)^{-1}(z), q(m),
  (q^*)^{-1}(w)), m) \in q^*(G_\Gamma).
  \]
  Since $\pi_\infty$ is a composition of groupoid homomorphisms it is
  also a groupoid homomorphism.  The remaining assertions are
  straightforward.
\end{proof}

\begin{proof}[Proof of Theorem~\ref{thm:pullback}]
  To deduce this theorem from Corollary~\ref{cor:push-forward}, we
  just need to establish that
  $\Iso^{\circ}(G_\Lambda) \cong \Lambda^\infty \times H$ and is
  closed in $G_\Lambda$. Let $\pi_\infty : G_\Lambda \to G_\Gamma$ be
  the groupoid homomorphism of Lemma~\ref{lem:pullback
    groupoid}. Since $G_\Gamma$ is essentially principal, we have
  $\Iso^{\circ}(G_\Lambda) = \pi_{\infty}^{-1}(G_\Gamma^{(0)})$, which
  is clopen---and in particular closed---because $\pi_\infty$ is
  continuous. The definition of $\pi_\infty$ shows that
  $\pi_\infty^{-1}(G_\Gamma^{(0)}) = \{(x, m, x) : x \in
  \Lambda^\infty, m \in H\}$.
  By Lemma~\ref{lem:pullback groupoid},
  $\pi_\infty|_{\{(x,m,x) : x \in \Lambda^\infty\}}$ is a
  homeomorphism onto $\Gamma^\Omega$ for each $m$, and so
  $\Iso^{\circ}(G_\Lambda) \cong \Lambda^\infty \times H$ as
  topological spaces.
\end{proof}

\begin{proposition}\label{prp:primtop}
  Resume the notation and hypotheses of
  Theorem~\ref{thm:pullback}. For each antisymmetric bicharacter
  $\omega$ of $H$, the set
  $C_\omega := \{x \in \Lambda^\infty : \sigma_x \sigma^*_x =
  \omega\}$
  is a clopen invariant subset of $\Lambda^\infty$. The set $\Xi$ of
  antisymmetric bicharacters such that $C_\omega \not= \emptyset$ is
  countable, and $\Prim C^*(\Iso^{\circ}(G_\Lambda), \sigma)$ is
  homeomorphic to the topological disjoint union
  $\bigsqcup_{\omega \in \Xi} C_\omega \times
  {Z(\omega)}\widehat{\;}$.
\end{proposition}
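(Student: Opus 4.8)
The plan is to verify, in order, that each $C_\omega$ is clopen and invariant, that $\Xi$ is countable, and that $\Prim C^*(\Iso^{\circ}(G_\Lambda),\sigma)$ has the stated form; the last of these is where the work is. For $x\in\Lambda^\infty$ write $\sigma_x(h,h')=\sigma\bigl((x,h,x),(x,h',x)\bigr)$ as in Theorem~\ref{thm:pullback}. Since $\sigma$ is locally constant on the set of composable pairs in $G_\Lambda$ and, for each fixed $h,h'\in H$, the map $x\mapsto\bigl((x,h,x),(x,h',x)\bigr)$ is continuous, the function $x\mapsto\sigma_x(h,h')$ is locally constant on $\Lambda^\infty$. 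As $H$ is a finitely generated subgroup of $\Z^k$, the antisymmetric bicharacter $\omega_x:=\sigma_x\sigma^*_x$ is determined by its finitely many values on a fixed generating set of $H$, so $x\mapsto\omega_x$ is locally constant and $C_\omega=\{x:\omega_x=\omega\}$ is clopen. For invariance, suppose $x$ and $y$ lie in the same orbit of the action of $G_\Lambda$ on $\Lambda^\infty$, and choose $g\in G_\Lambda$ with $s(g)=x$ and $r(g)=y$. A direct computation gives $g\,(x,h,x)\,g^{-1}=(y,h,y)$ for all $h\in H$, so conjugation by $g$ carries $\Iso^{\circ}(G_\Lambda)^x_x$ onto $\Iso^{\circ}(G_\Lambda)^y_y$ while fixing the $H$-coordinate. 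Because conjugation by a groupoid element implements a cohomology between the isotropy $2$-cocycles at its source and its range, $\sigma_x$ and $\sigma_y$ are cohomologous $\T$-valued $2$-cocycles on $H$; since $\tau\mapsto\tau\tau^*$ is an isomorphism of $H^2(H,\T)$ onto the group of antisymmetric bicharacters of $H$, we get $\omega_x=\omega_y$, so $C_\omega$ is invariant.

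The sets $\{C_\omega\}_\omega$ are pairwise disjoint open subsets of the second-countable space $\Lambda^\infty$, so only countably many are nonempty; that is, $\Xi$ is countable. Since each $C_\omega$ is clopen and $\Iso^{\circ}(G_\Lambda)\cong\Lambda^\infty\times H$ is a group bundle, there is a decomposition
\[
C^*\bigl(\Iso^{\circ}(G_\Lambda),\sigma\bigr)\;\cong\;\bigoplus_{\omega\in\Xi}C^*\bigl(C_\omega\times H,\sigma\bigr),
\]
where $C^*(C_\omega\times H,\sigma)$ is the section algebra of the restriction to $C_\omega$ of the continuous field of Theorem~\ref{thm:pullback}, with fibres $C^*(H,\sigma_x)$. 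Since $\Prim$ sends such direct sums to topological disjoint unions, it suffices to produce a natural homeomorphism $\Prim C^*(C_\omega\times H,\sigma)\cong C_\omega\times Z(\omega)\widehat{\;}$ for each $\omega\in\Xi$.

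Fix $\omega\in\Xi$ and a $\T$-valued $2$-cocycle $\sigma_\omega$ on $H$ representing the cohomology class corresponding to $\omega$. For every $x\in C_\omega$ we have $\omega_x=\omega$, hence $\sigma_x$ and $\sigma_\omega$ are cohomologous; let $b_x:H\to\T$ be the unique cochain with $\partial b_x=\sigma_x\overline{\sigma_\omega}$ that takes the value $1$ on the chosen generators of $H$. Constructing $b_x$ one generator at a time expresses $b_x(n)$, for each fixed $n\in H$, as a finite product of values of $\sigma_x$ and of the fixed cocycle $\sigma_\omega$; as each $x\mapsto\sigma_x(p,q)$ is locally constant, so is $x\mapsto b_x(n)$. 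The standard cochain map $f\mapsto\bigl((x,n)\mapsto\overline{b_x(n)}\,f(x,n)\bigr)$, which implements the fibrewise isomorphisms $C^*(H,\sigma_x)\cong C^*(H,\sigma_\omega)$, is then a $*$-isomorphism from $C_c(C_\omega\times H,\sigma)$ onto $C_c(C_\omega\times H,\sigma_\omega)$, where $\sigma_\omega$ is regarded as an $x$-independent cocycle on the group bundle $C_\omega\times H$ (only finitely many of the locally constant functions $x\mapsto b_x(n)$ are involved, since compactly supported sections have finite support in $H$). Passing to completions gives
\[
C^*\bigl(C_\omega\times H,\sigma\bigr)\;\cong\;C^*\bigl(C_\omega\times H,\sigma_\omega\bigr)\;=\;C_0\bigl(C_\omega,\,C^*(H,\sigma_\omega)\bigr),
\]
and taking primitive-ideal spaces, using $\Prim C^*(H,\sigma_\omega)=\hat S_{\sigma_\omega}=Z(\omega)\widehat{\;}$, yields $\Prim C^*(C_\omega\times H,\sigma)\cong C_\omega\times Z(\omega)\widehat{\;}$. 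Assembling over $\omega\in\Xi$ gives the asserted homeomorphism.

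The main obstacle is this last step. One cannot simply trivialise the field by treating $x\mapsto\sigma_x$ as locally constant, because it need not be locally constant as a map into the whole group of $\T$-valued $2$-cocycles on $H$; the remedy is to compare each $\sigma_x$ against a single fixed representative $\sigma_\omega$ through the normalised cochains $b_x$, whose value at any fixed group element is locally constant in $x$, and to exploit that compactly supported sections see only finitely many group elements, so that this ``entrywise'' local constancy suffices to trivialise the continuous field over $C_\omega$. A secondary point requiring care is the invariance of $C_\omega$, which rests on the standard but not entirely trivial fact that conjugation in a groupoid implements a cohomology of the isotropy cocycles.
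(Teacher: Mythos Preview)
Your proof is correct and follows essentially the same route as the paper: both arguments show that $x\mapsto\omega_x$ is locally constant and orbit-invariant to get the clopen invariant decomposition, pass to the direct sum over $\Xi$, and then trivialise each piece via a $1$-cochain $b$ satisfying $(\delta^1 b)\sigma|_{C_\omega\times H}=1\times\sigma_\omega$ to obtain $C_0(C_\omega)\otimes C^*(H,\sigma_\omega)$. The only cosmetic differences are that the paper cites \cite{KuPaSi_Sim2014} for the cochain construction whereas you build $b_x$ explicitly from its values on generators, and your countability argument uses second countability of $\Lambda^\infty$ directly rather than a compact exhaustion.
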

\begin{proof}
  For each $x \in \Lambda^\infty$, write
  $\omega_x := \sigma_x \sigma^*_x$. As in the first part of the proof
  of \cite[Lemma~3.3]{KuPaSi_Sim2014}, the map $x \mapsto \omega_x$ is
  locally constant because $\sigma$ is locally constant and $H$ is
  finitely generated.  The second part of the proof shows that the
  cohomology class of $\sigma_x$ is constant along orbits; since
  $\sigma \mapsto \sigma\sigma^*$ induces an isomorphism of
  $H^2(H, \T)$ with the group of antisymmetric bicharacters (see
  \cite[Proposition~3.2]{olepedtak:jot80}), it follows that
  $x \mapsto \omega_x$ is constant along orbits as well.  Since every
  locally constant function is continuous, it follows that
  $x \mapsto \omega_x$ is constant on orbit closures.

  Since $x \mapsto \omega_x$ is locally constant, for each bicharacter
  $\omega$, the set
  $C_\omega = \{y \in \Lambda^\infty : \omega_y = \omega\}$ is
  clopen. This $C_\omega$ is also invariant because
  $x \mapsto \omega_x$ is constant along orbits. Choose an increasing
  sequence $F_n$ of finite subsets of $\Lambda^0$ such that
  $\bigcup F_n = \Lambda^0$. Each
  $K_n := \bigcup_{v \in F_n} Z(v) \subseteq \Lambda^\infty$ is
  compact, so for each $n$ the set $K_n$ is covered by finitely many
  $C_\omega$. Since the $C_\omega$ are mutually disjoint, it follows
  that $\Xi_n := \{\omega : C_\omega \cap K_n \not= \emptyset\}$ is
  finite. So $\Xi = \bigcup_n \Xi_n$ is countable.

  We now have
  $C^*(\Iso^{\circ}(G_\Lambda), \sigma) = \bigoplus_{\omega \in \Xi}
  C^*(\Iso^{\circ}(G_\Lambda)|_{C_\omega}, \sigma)$.
  Fix $\omega \in \Xi$, and define
  $\mathcal{I}_\omega := \Iso^{\circ}(G_\Lambda)|_{C_\omega} \cong
  C_\omega \times H$.
  The argument of the second and third paragraphs in the proof of
  \cite[Proposition~3.1]{KuPaSi_Sim2014} shows that there is a
  2-cocycle $\sigma_\omega$ of $H$ and a 1-cochain $b$ of
  $\mathcal{I}_\omega$ such that
  $(\delta^1 b) \sigma|_{\mathcal{I}_\omega} = 1_{C_\omega} \times
  \sigma_\omega$.
  Hence
  $C^*(\mathcal{I}_\omega, \sigma|_{\mathcal{I}_\omega}) \cong
  C_0(C_\omega) \otimes C^*(H, \sigma_\omega)$
  by \cite[Proposition~II.1.2]{ren:groupoid}.

  As observed earlier,
  $\Prim C^*(H, \sigma_\omega) \cong \hat
  S_{\sigma_\omega\sigma^*_\omega} = Z(\omega)\widehat{\;}$;
  the result follows.
\end{proof}

\printbibliography

\end{document}